\documentclass[11pt]{article}

\usepackage{geometry}
\geometry{verbose,letterpaper,vmargin=1.0in,hmargin=1.0in}
\usepackage{graphicx}

%-----------------------------------------------------------------
% Custom Macros
\usepackage[utf8]{inputenc}
\usepackage{pgf}
\usepackage{amsthm}
\usepackage{amsfonts}
\usepackage{amssymb}
\usepackage{mathtools}
\usepackage{tikz-cd}
\usepackage{rotating}
\usepackage{color}
\usepackage{hyperref}
\usepackage{subcaption}

\theoremstyle{plain}

\newtheorem{lemma}{Lemma}[section]
\newtheorem{theorem}[lemma]{Theorem}
\newtheorem{proposition}[lemma]{Proposition}
\newtheorem{corollary}[lemma]{Corollary}
\newtheorem{claim}[lemma]{Claim}
\theoremstyle{definition}
\newtheorem{example}[lemma]{Example}
\newtheorem{remark}[lemma]{Remark}

\newtheorem{question}[lemma]{Question}
\newtheorem{definition}[lemma]{Definition}

\newcommand{\para}[1]{\vspace{2mm}\noindent{\textbf{#1}}}

\newcommand{\B}{\ensuremath{\mathbb{B}}}

\newcommand{\bS}{\ensuremath{\mathbb{S}}}
\newcommand{\Z}{\ensuremath{\mathbb{Z}}}

\newcommand{\vr}[2]{\mathrm{VR}(#1;#2)}
\newcommand{\icech}[2]{\mathrm{\check{C}ech}(#1;#2)}
\newcommand{\cech}[3]{\mathrm{\check{C}ech}(#1,#2;#3)}

\DeclareMathOperator{\nn}{\mathcal{N}}

\DeclareMathOperator{\diam}{diam}
\DeclareMathOperator{\PH}{PH}

\usepackage{todonotes}

\definecolor{darkred}{rgb}{0.8, 0.1, 0.3}

\tikzset{cong/.style={above,edge node={node [sloped, allow upside down, auto=false]{$\cong$}}},
         Isom/.style={above,every to/.append style={edge node={node [sloped, allow upside down, auto=false]{$\cong$}}}}}

\tikzset{simeq/.style={above,every to/.append style={edge node={node [sloped, allow upside down, auto=false]{$\simeq$}}}}}

\tikzset{commutative diagrams/.cd,
mysymbol/.style={start anchor=center,end anchor=center,draw=none}
}

\newcommand{\maxvalidset}       {{maximal valid set}}
%-----------------------------------------------------------------
\title{On Homotopy Types of Vietoris--Rips Complexes of Metric Gluings}

\author{Micha{\l} Adamaszek, Henry Adams, Ellen Gasparovic, Maria Gommel, Emilie Purvine, \\
Radmila Sazdanovic, Bei Wang, Yusu Wang, and Lori Ziegelmeier}
\date{}

\begin{document}

\maketitle

\begin{abstract}
We study Vietoris--Rips complexes of metric wedge sums and metric gluings.
We show that the Vietoris--Rips complex of a wedge sum, equipped with a natural metric, is homotopy equivalent to the wedge sum of the Vietoris--Rips complexes.
We also provide generalizations for when two metric spaces are glued together along a common isometric subset.
As our main example, we deduce the homotopy type of the Vietoris--Rips complex of two metric graphs 
glued together along a sufficiently short path (compared to lengths of certain loops in the input graphs).
As a result, we can describe the persistent homology, in all homological dimensions, of the Vietoris--Rips complexes of a wide class of metric graphs.
 \end{abstract}

%------------------------------------------------

%------------------------------------------------------------------------------------------------
\section{Introduction}
\label{sec:introduction}

When equipped with a notion of similarity or distance, data can be thought of as living in a metric space.
Our goal is to characterize the homotopy types of geometric thickenings of a wide class of metric spaces.
In particular, we consider metric spaces formed by gluing smaller metric spaces together in an admissible fashion, as we will specify.
We then use our results to characterize the persistent homology of these spaces.
Persistent homology is a central tool in topological data analysis that captures complex interactions within a system at multiple scales ~\cite{Carlsson2009,EdelsbrunnerHarer2010}.

The geometric complexes of interest are Vietoris--Rips complexes, which build a simplicial complex on top of a metric space
with respect to a scale parameter $r$.
We first study Vietoris--Rips complexes of metric wedge sums: given two metric spaces $X$ and $Y$ with specified basepoints, the metric wedge sum $X\vee Y$ is obtained by gluing $X$ and $Y$ together at the specified points, and then extending the metrics.
We show that the Vietoris--Rips complex of the metric wedge sum is homotopy equivalent to the wedge sum of the Vietoris--Rips complexes.
We also provide generalizations for certain more general metric gluings, namely, when two metric spaces are glued together along a common isometric subset.

One common metric space that appears in applications such as road networks~\cite{AanChaChe2012}, brain functional networks~\cite{BiswalYetkinHaughton1995}, and the cosmic web~\cite{SousbiePichonKawahara2011} is a \emph{metric graph}, a structure where any two points of the graph (not only vertices) are assigned a distance equal to the minimum length of a path from one point to the other.
In this way, a metric graph encodes the proximity data of a network into the structure of a metric space.
As a special case of our results, we show that the Vietoris--Rips complex of two metric graphs glued together along a sufficiently short common path is homotopy equivalent to the union of the Vietoris--Rips complexes.
This enables us to determine the homotopy types of geometric thickenings of a large class of metric graphs, namely those that can be constructed iteratively via simple gluings.

The motivation for using Vietoris--Rips complexes in the context of data analysis is that under some assumptions, these complexes can recover topological features of an unknown sample space underlying the data.
Indeed, in~\cite{Hausmann1995,Latschev2001}, it is shown that if the underlying space $M$ is a closed Riemannian manifold, if scale parameter $r$ is sufficiently small compared to the injectivity radius of $M$, and if a sample $X$ is sufficiently close to $M$ in the Gromov--Hausdorff distance, then the Vietoris--Rips complex of the sample $X$ at scale $r$ is homotopy equivalent to $M$.
In this paper, we identify the homotopy types of Vietoris--Rips complexes of certain metric graphs at all scale parameters $r$, not just at sufficiently small scales.

Our paper builds on the authors' prior work characterizing the 1-dimensional intrinsic \v{C}ech and Vietoris--Rips persistence modules associated to an arbitrary metric graph.
Indeed,~\cite{GasparovicGommelPurvine2018} shows that the 1-dimensional intrinsic \v{C}ech persistence diagram associated to a metric graph of genus $g$ (i.e., the rank of the 1-dimensional homology of the graph) consists of the points $\left\{ \left(0, \frac{\ell_i}{4}\right) : 1\leq i \leq g\right\}$, where $\ell_i$ corresponds to the length of the $i^{th}$ loop.
In the case of the Vietoris--Rips complex, the results hold with the minor change that the persistence points are $\left\{ \left(0, \frac{\ell_i}{6}\right)~|~1\leq i \leq g\right\}$.
An extension of this work is~\cite{Virk2017}, which studies the 1-dimensional persistence of geodesic spaces.
In~\cite{AdamaszekAdams2017,AdamaszekAdamsFrick2016}, the authors show that the Vietoris--Rips or \v{C}ech complex of the circle obtains the homotopy types of the circle, the $3$-sphere, the $5$-sphere, \ldots, as the scale $r$ increases, giving the persistent homology in all dimensions of a metric graph consisting of a single cycle.
In this paper, we extend this to a larger class of graphs: our results characterize the persistence profile, in any homological dimension, of Vietoris--Rips complexes of metric graphs that can be iteratively built by gluing trees and cycles together along short paths.

Our results on Vietoris--Rips complexes of metric gluings have implications for future algorithm development along the line of ``topological decompositions."
The computation time of homotopy, homology, and persistent homology depend on the size of the simplicial complex.
It would be interesting to investigate if our Theorem~\ref{thm:graph-gluing} means that one can break a large metric graph into smaller pieces, perform computations on the simplicial complex of each piece, and then subsequently reassemble the results together.
This has the potential to use less time and memory.

\para{Outline.} Section~\ref{sec-background} introduces the necessary background and notation.
Our main results on the Vietoris--Rips complexes of metric wedge sums and metric gluings are established in Section~\ref{sec:homotopy}.
In addition to proving homotopy equivalence in the wedge sum case, we show that the persistence module of the wedge sum of the complexes is isomorphic to the persistence module of the complex for the wedge sum.
We develop the necessary machinery to prove that the Vietoris--Rips complex of metric spaces glued together along a sufficiently short path is homotopy equivalent to the union of the Vietoris--Rips complexes.
In Section~\ref{sec:types}, we describe families of metric graphs to which our results apply and furthermore discuss those that we cannot yet characterize.
In Section~\ref{sec:sup}, we describe Vietoris--Rips complexes of gluings of subsets of product spaces,  equipped with the supremum metric.
We conclude in Section~\ref{sec:discussion} with a discussion of our overarching goal of characterizing the persistent homology profiles of large families of metric graphs.

An extended abstract of the present paper previously appeared as a conference paper~\cite{AdamaszekAdamsGasparovic2018}.
The current paper contains the following extensions beyond~\cite{AdamaszekAdamsGasparovic2018}.
We extend~\cite[Lemma~2]{AdamaszekAdamsGasparovic2018} from joins over a single simplex to joins over a collapsible subcomplex in Lemma~\ref{lem:twoplusplus}.
Lemma~\ref{lem:compactness-Quillen} is a more general extension of~\cite[Lemma~3]{AdamaszekAdamsGasparovic2018} when passing from gluings of finite metric spaces to infinite ones.
We also include the proofs of some corollaries and propositions that were given without proof in the conference version of this paper.
Theorem~\ref{thm:general-gluing-edited} extends~\cite[Theorem~8]{AdamaszekAdamsGasparovic2018}.
As pointed out by Wojciech Chach\'{o}lski, Alvin Jin, Martina Scolamiero, and Francesca Tombari, ~\cite[Corollary~9]{AdamaszekAdamsGasparovic2018} is incorrect as stated.
We describe their counterexample in Appendix~\ref{appendix:counterexample}.
This corollary was previously used in proving Theorem 10 of~\cite{AdamaszekAdamsGasparovic2018}.
However, we now remove the dependence on this incorrect result in Theorem~\ref{thm:graph-gluing}, proving a stronger result than in~\cite[Theorem~10]{AdamaszekAdamsGasparovic2018}.
This has implications for further classes of metric graph gluings as discussed in Section~\ref{sec:types}.
We also include a more thorough discussion of gluings with the \v{C}ech complex.
Finally, Section~\ref{sec:sup}, on gluings of spaces using the supremum metric, is entirely new.

%------------------------------------------------------------------------------------------------

\section{Background}
\label{sec-background}

In this section, we recall the relevant background in the settings of
simplicial complexes and metric spaces, including metric graphs.
For a more comprehensive introduction of related concepts from algebraic topology, we refer the reader to~\cite{Hatcher2002}, and to~\cite{Kozlov2008} and~\cite{EdelsbrunnerHarer2010} for a combinatorial and computational treatment, respectively.

\para{Simplicial complexes.}
An \emph{abstract simplicial complex} $K$ is a collection of finite subsets of some (possibly infinite) vertex set $V=V(K)$, such that if $\sigma\in K$ and $\tau\subseteq\sigma$, then $\tau\in K$.
In this paper, we use the same symbol $K$ to denote both the abstract simplicial complex and its geometric realization.
For $V'\subseteq V$, we let $K[V']$ denote the induced simplicial complex on the vertex subset $V'$.
The \emph{join} of two disjoint simplices $\sigma = \{x_0,
\cdots, x_n\}$ and $\tau = \{y_0, \cdots, y_m\}$ is the simplex $\sigma \cup \tau: = \{x_0, \cdots, x_n, y_0, \cdots, y_m\}$.
If $K$ and $L$ are simplicial complexes with disjoint vertex sets $V(K)$ and $V(L)$, then their \emph{join} $K*L$ is the simplicial complex whose vertex set is $V(K)\cup V(L)$, and whose set of simplices is $K*L=\{\sigma_K\cup\sigma_L~|~\sigma_K\in K\mbox{ and }\sigma_L\in L\}$~\cite[Definition~2.16]{Kozlov2008}.

By an abuse of notation, a simplex $S\in K$ can be considered as either a single simplex, or as a simplicial complex $\{S'~|~S'\subseteq S\}$ with all subsets as faces.
When taking joins, we use $\cup$ to denote that the result is a simplex, and we use $*$ to denote that the result is a simplicial complex.
For example, for $a\in V(K)$ a vertex and $S\in K$ a simplex, we use the notation $a\cup S :=\{a\} \cup S$ to denote the simplex formed by adding vertex $a$ to $S$.
We instead use $a*S:=\{S', a\cup S'~|~S'\subseteq S\}$ to denote the associated simplicial complex.
Similarly, for two simplices $\sigma,S\in K$, we use $\sigma\cup S$ to denote a simplex, and we instead use $\sigma*S:=\{\sigma'\cup S'~|~\sigma'\subseteq\sigma,\ S'\subseteq S\}$ to denote the associated simplicial complex.
We let $\dot{S}$ be the boundary simplicial complex $\dot{S}=\{S'~|~S'\subsetneq S\}$, and therefore $a*\dot{S} : = \{S', a \cup S'~|~S'\subsetneq S\}$ and $\sigma*\dot{S} : = \{\sigma'\cup S'~|~\sigma'\subseteq\sigma,\ S'\subsetneq S\}$ are simplicial complexes.

A simplicial complex $K$ is equipped with the topology of a CW-complex~\cite{Hatcher2002}: a subset of the geometric realization of $K$ is closed if and only if its intersection with each finite-dimensional skeleton is closed.

\para{Simplicial collapse.} Recall that if $\tau$ is a face of a simplex $\sigma$, then $\sigma$ is said to be a \emph{coface} of $\tau$.
Given a simplicial complex $K$ and a maximal simplex $\sigma\in K$, we say that a face $\tau\subsetneq\sigma$ is a  \emph{free face of $\sigma$} if $\sigma$ is the unique maximal coface of $\tau$ in $K$.	
A \emph{simplicial collapse} of $K$ with respect to a pair $(\tau, \sigma)$, where $\tau$ is a free face of $\sigma$, is the removal of all simplices $\rho$ such that $\tau \subseteq \rho \subseteq \sigma$.
If $\dim(\sigma) = \dim(\tau)+1$ then this is an \emph{elementary simplicial collapse}.
If $L$ is obtained from a finite simplicial complex $K$ via a sequence of simplicial collapses, then $L$ is homotopy equivalent to $K$, denoted $L\simeq K$~\cite[Proposition~6.14]{Kozlov2008}.

\para{Metric spaces.}
Let $(X,d)$ be a metric space, where $X$ is a set equipped with a distance function $d$.
Let $B(x, r) : = \{y \in X \mid d(x,y) \leq r\}$ denote the closed ball with center $x \in X$ and radius $r\geq 0$.
The \emph{diameter} of $X$ is $\diam(X)=\sup\{d(x,x')\mid x,x'\in X\}$.
A \emph{submetric space} of $X$ is any set $X' \subseteq X$ with its distance function defined by restricting $d$ to $X' \times X'$.

\para{Vietoris--Rips and \v{C}ech complexes.}
We consider two types of simplicial complexes constructed from a metric space $(X, d)$.
These constructions depend on the choice of a scale parameter $r\ge0$.
First, the \emph{Vietoris--Rips complex} of $X$ at scale $r\ge0$ consists of all finite subsets of diameter at most $r$, that is, $\vr{X}{r} = \{\text{finite }\sigma \subseteq X\mid\diam(\sigma) \leq r\}$.
Second, for $X$ a submetric space of $X'$, we define the \emph{ambient \v{C}ech complex} with vertex set $X$ as $\cech{X}{X'}{r} : = \{ \text{finite }\sigma \subseteq X \mid \exists~x' \in X' \text{ with } d(x, x') \leq r\ \forall~x \in \sigma\}$.
The set $X$ is sometimes called the set of ``landmarks'', and $X'$ is called the set of ``witnesses''~\cite{ChazalSilvaOudot2013}.
This complex can equivalently be defined as the nerve of the balls $B_{X'}(x,r)$ in $X'$ that are centered at points $x\in X$, that is, $\cech{X}{X'}{r} = \{\text{finite }\sigma \subseteq X \mid \bigcap_{x \in \sigma} B_{X'}(x,r) \neq \emptyset \}.$ When $X=X'$, we denote the \emph{(intrinsic) \v{C}ech complex} of $X$ as $\icech{X}{r}=\cech{X}{X}{r}$.
Alternatively, the \v{C}ech complex can be defined with an open ball convention, and the Vietoris--Rips complex can be defined as $\vr{X}{r} = \{\sigma \subseteq X \mid \diam(\sigma) < r\}$.
Unless otherwise stated, all of our results hold for both the open and closed convention for \v{C}ech complexes, as well as for both the $<$ and $\le$ convention on Vietoris--Rips complexes.

\para{Persistent homology.} For $k$ a field, for $i\ge 0$ a homological dimension, and for $Y$ a filtered topological space, we denote the \emph{persistent homology} (or \emph{persistence}) \emph{module} of $Y$ by $\PH_i(Y;k)$.
Persistence modules form a category~\cite[Section~2.3]{ChazalDe-SilvaGlisse2016}, where morphisms are given by commutative diagrams.

\para{Gluings of topological spaces.}
Let $X$ and $Y$ be two topological spaces that share a common subset $A=X\cap Y$.
The \emph{gluing} space $X\cup_A Y$ is formed by gluing $X$ to $Y$ along their common subspace $A$.
More formally, let $\iota_X\colon A\to X$ and $\iota_Y\colon A\to Y$ denote the inclusion maps.
Then the gluing space $X\cup_A Y$ is the quotient space of the disjoint union $X\sqcup Y$ under the identification $\iota_X(a)\sim\iota_Y(a)$ for all $a\in A$.
In particular, for $X$ and $Y$ simplicial complexes and $A$ a common subcomplex, the gluing $X\cup_A Y$ is obtained by identifying common faces and is itself a simplicial complex.

\para{Gluings of metric spaces.}
Following Definition 5.23 in~\cite{BridsonHaefliger1999}, we define a way to glue two metric spaces along a closed subspace.
Let $X$ and $Y$ be arbitrary metric spaces with closed subspaces $A_X \subseteq X$ and $A_Y \subseteq Y$.
Let $A$ be a metric space with isometries $\iota_X : A \rightarrow A_X$ and $\iota_Y : A \rightarrow A_Y$.
Let $X \cup_A Y$ denote the quotient of the disjoint union of $X$ and $Y$ by the equivalence between $A_X$ and $A_Y$, i.e.,
$X\cup_A Y = X\sqcup Y / \{\iota_X(a) \sim \iota_Y(a)~|~a \in A\}$.
Then $X\cup_A Y$ is the \emph{gluing of $X$ and $Y$ along $A$.}
We define a metric on $X\cup_A Y$, which
extends the metrics on $X$ and $Y$:
\[ d_{X \cup_A Y}(s,t) = \begin{cases}
d_X(s,t) &\mbox{if }s,t \in X \\
d_Y(s,t) &\mbox{if }s,t \in Y \\
\inf_{a \in A} d_X(s,\iota_X(a)) + d_Y(\iota_Y(a),t) &\mbox{if }s \in X, t \in Y.
\end{cases} \]
Lemma~5.24 of~\cite{BridsonHaefliger1999} shows that the gluing $(X \cup_A Y, d_{X\cup_A Y})$ of two metric spaces along common isometric closed subsets is itself a metric space.
In this paper all of our metric gluings will be done in the case where $X \cap Y = A$ and the $\iota_X$ and $\iota_Y$ are identity maps.
This definition of gluing metric spaces agrees with that of gluing their respective topological spaces, with the standard metric ball topology.

\para{Pointed metric space and wedge sum.}
A {\em pointed metric space} is a metric space $(X,d_X)$ with a distinguished basepoint $b_X \in X$.
In the notation of metric gluings, given two pointed metric spaces $(X,d_X)$ and $(Y,d_Y)$, let $X \vee Y=X\cup_A Y$ where $A_X=\{b_X\}$ and $A_Y=\{b_Y\}$; we also refer to $X \vee Y$ as the \emph{wedge sum} of $X$ and $Y$.
Then the gluing metric on $X \vee Y$ is
\[ d_{X \vee Y}(s,t) = \begin{cases}
d_X(s,t) &\mbox{if }s,t \in X \\
d_Y(s,t) &\mbox{if }s,t \in Y \\
d_X(s,b_X) + d_Y(b_Y,t) &\mbox{if }s \in X, t \in Y.
\end{cases} \]

\para{Metric graphs.}
A graph $G$ consists of set $V = \{v_i\}$ of vertices and a set $E = \{e_j\}$ of edges connecting the vertices.
A graph $G$ is a \emph{metric graph} if each edge $e_j$ is assigned
a positive finite length $l_j $~\cite{BridsonHaefliger1999,BuragoBuragoIvanov2001,Kuchment2004}.
Under mild hypotheses\footnote{For every vertex, the lengths of the edges incident to that vertex are bounded away from zero~\cite[Section~1.9]{BridsonHaefliger1999}.}, the graph $G$ can be equipped with a natural metric $d_G$: the distance between any two points $x$ and $y$ (not necessarily vertices) in the metric graph is the infimum of the length of all paths between them.

\para{Loops of a metric graph.}
Let $\mathbb{S}^1$ be the circle.
A \emph{loop} (or \emph{cycle}) of a metric graph $G$ is a continuous injective map $c: \mathbb{S}^1 \to G$.
We also use the word \emph{loop} to refer to the image of this map.
Intuitively, elements of the singular 1-dimensional homology of $G$ may be represented by collections of loops in $G$~\cite{Hatcher2002}.
The \emph{length} of a loop is the length of the image of the map $c$.

%------------------------------------------------------------------------------------------------
\section{Homotopy equivalences for metric gluings}
\label{sec:homotopy}

\subsection{Homotopy lemmas for simplicial complexes}
In this section, we present a series of lemmas that will be vital to our study of homotopy equivalences of simplicial complexes.
We begin with a lemma proved by Barmak and Minian~\cite{BarmakMinian2008} regarding a sequence of elementary simplicial collapses between two simplicial complexes (Lemma~\ref{lem:39}).
We then generalize this lemma in order to use it in the case where the simplicial collapses need not be elementary (Lemma~\ref{lem:twoplusplus}).
While these first few lemmas are relevant in the context of finite metric spaces, Lemma~\ref{lem:compactness-Quillen} and Corollary~\ref{cor:compactness} will be useful when passing to arbitrary metric spaces.
This set of lemmas will later allow us to show that a complex built on a gluing is homotopy equivalent to the gluing of the complexes.

\begin{lemma}[Lemma~3.9 from~\cite{BarmakMinian2008}]
\label{lem:39}
Let $L$ be a subcomplex of a finite simplicial complex $K$.
Let $T$ be a set
of simplices of $K$ which are not in $L$, and let $a$ be a vertex of $L$
which is contained in no simplex of $T$, but such that $a\cup S$ is a simplex of
$K$ for every $S \in T$.
Finally, suppose that $K = L \cup\bigcup_{S \in T} \{S,
a\cup S\}$.
Then $K$ is homotopy equivalent
to $L$ via a sequence of elementary simplicial collapses.
\end{lemma}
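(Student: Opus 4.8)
The plan is to exhibit an explicit ordering of the pairs $\{(S,\,a\cup S) : S\in T\}$ along which $K$ collapses to $L$ one elementary collapse at a time. Order the elements of $T$ as $S_1,\dots,S_n$ so that the dimensions are non-increasing, $\dim S_1\ge\dim S_2\ge\cdots\ge\dim S_n$, set $K_0:=K$, and for $1\le i\le n$ put $K_i:=L\cup\bigcup_{j>i}\{S_j,\,a\cup S_j\}$. By hypothesis $K_0=K$, and $K_n=L$. I claim each $K_i$ is obtained from $K_{i-1}$ by the elementary simplicial collapse through the pair $(S_i,\,a\cup S_i)$; chaining these $n$ collapses gives a sequence of elementary collapses from $K$ to $L$, hence $K\simeq L$ by \cite[Proposition~6.14]{Kozlov2008}.

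Fix $i$ and work in $K_{i-1}=L\cup\bigcup_{j\ge i}\{S_j,\,a\cup S_j\}$. Since $a$ lies in no simplex of $T$, we have $a\notin S_i$, so $S_i\subsetneq a\cup S_i$ and $\dim(a\cup S_i)=\dim S_i+1$; thus the collapse will be elementary once $S_i$ is shown to be a free face of $a\cup S_i$. I will verify (i) $a\cup S_i$ is a maximal simplex of $K_{i-1}$, and (ii) it is the unique maximal coface of $S_i$ in $K_{i-1}$. Both are short case analyses over the three kinds of simplices in $K_{i-1}$. If a simplex $\rho$ with $a\cup S_i\subseteq\rho$ (resp.\ $S_i\subseteq\rho$) lay in $L$, then $S_i\in L$, contradicting $S_i\in T$. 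If $\rho=S_j$ for some $j\ge i$, then in case (i) $a\in S_j\in T$ is impossible, and in case (ii) such an $S_j$ is anyway not maximal, being a proper face of $a\cup S_j\in K_{i-1}$. Finally, if $\rho=a\cup S_j$ with $j\ge i$ and $a\cup S_i\subseteq\rho$ (resp.\ $S_i\subseteq\rho$), then deleting $a$ yields $S_i\subseteq S_j$; combined with $\dim S_j\le\dim S_i$ (from $j\ge i$) this forces $S_i=S_j$, i.e.\ $j=i$ and $\rho=a\cup S_i$. This proves (i) and (ii), so $S_i$ is a free face of $a\cup S_i$.

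Performing this collapse removes exactly the two simplices $S_i$ and $a\cup S_i$ (the only $\rho$ with $S_i\subseteq\rho\subseteq a\cup S_i$, since the dimensions differ by one), and both occur in $K_{i-1}$ only in the $j=i$ term of the union: indeed $S_i,\,a\cup S_i\notin L$ by the above, and equalities such as $S_i=S_j$, $a\cup S_i=a\cup S_j$ for $j\ne i$, or $a\in S_j$ are all impossible. Hence the result of the collapse is precisely $K_i$. (That $K_i$ is again a subcomplex is automatic for a simplicial collapse; alternatively, a proper face of $S_j$ or of $a\cup S_j$ not lying in $L$ must equal some $S_k$ with $\dim S_k<\dim S_j$, hence $k>j>i$, so it remains in $K_i$.) Iterating from $i=1$ to $n$ produces the required sequence of elementary collapses.

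I expect the only genuine content to be verification (ii), the uniqueness of the maximal coface. This is exactly where the non-increasing dimension ordering is indispensable: processing a lower-dimensional $S\in T$ before a higher-dimensional $S'\in T$ containing it would leave $S$ with two distinct maximal cofaces and block the collapse. It is also where the hypothesis that $a$ avoids every simplex of $T$ is used — both to rule out the case $\rho=S_j$ and to guarantee $\dim(a\cup S)=\dim S+1$ so that the collapses are elementary.
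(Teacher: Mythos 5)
Your proof is correct, and it is essentially the same argument the paper uses (the paper cites Lemma~3.9 without proof, but its proof of the generalization, Lemma~\ref{lem:gen39}, runs the identical induction): order the simplices of $T$ by size so the largest are removed first, and check that at each stage $a\cup S_i$ is the unique maximal coface of $S_i$, so that $(S_i,\,a\cup S_i)$ is an elementary collapse pair. Your indexing is reversed relative to the paper's (you peel off $S_1,\dots,S_n$ in non-increasing dimension starting from $K$, while the paper builds $K_0=L$ up to $K_n=K$ with non-decreasing sizes and collapses back down), but this is purely cosmetic.
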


In~\cite{BarmakMinian2008}, Barmak and Minian observe that there is an elementary simplicial collapse from $K$ to $L$ if there is a simplex $S$ of $K$ and a vertex $a$ of $L$ not in $S$ such that $K = L \cup \{S, a\cup S\}$ and $L \cap (a*S) = a* \dot{S}$, where $\dot{S}$ denotes the boundary of $S$.
Indeed, $S$ is the free face of the elementary simplicial collapse, and the fact that $a\cup S$ is the unique maximal coface of $S$ follows from $L \cap (a*S) = a * \dot{S}$ (which implies the intersection of $L$ with $S$ is the boundary of $S$).
See Figure~\ref{fig:collapse} (left) for an illustration.

\begin{figure}[t!]
\begin{center}
\includegraphics[width=0.7\linewidth]{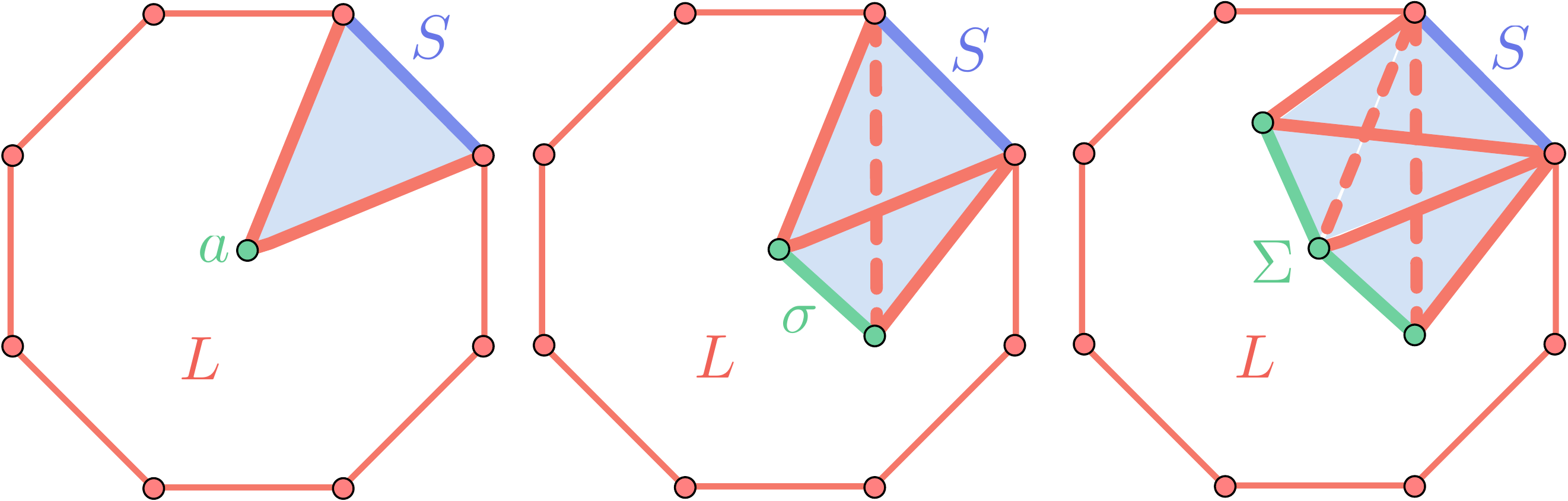}
\caption{From left to right: examples of simplicial collapses when $T = \{S\}$ for Lemma~\ref{lem:39}, Lemma~\ref{lem:gen39}, and Lemma~\ref{lem:twoplusplus}.}
\label{fig:collapse}
\end{center}
\end{figure}

It is not difficult to show that Barmak and Minian's observation can be made more general.
In fact, there is a simplicial collapse from $K$ to $L$ if there is a simplex $S$ of $K$ and another simplex $\sigma$ of $L$, disjoint from $S$, such that $K = L \cup \{\tau : S \subseteq \tau \subseteq \sigma \cup S\}$ and $L \cap (\sigma * S) = \sigma * \dot{S}$.
Indeed, $S$ is again the free face of the simplicial collapse, and the fact that $\sigma \cup S$ is the unique maximal coface of $S$ in $K$ follows from $L \cap (\sigma * S) = \sigma*\dot{S}$ (which implies the intersection of $L$ with $S$ is the boundary of $S$).
See Figure~\ref{fig:collapse} (middle) for an illustration.

\begin{lemma}[Generalization of Lemma~\ref{lem:39}]
\label{lem:gen39}
Let $L$ be a subcomplex of a finite simplicial complex $K$, and let $\sigma$ be a simplex in $L$.
Suppose $T$ is a set of simplices of $K$ which are not in $L$ and which are disjoint from $\sigma$, but such that $\sigma \cup S$ is a simplex of $K$ for every $S\in T$.
Finally, suppose $K = L \cup\bigcup_{\{S \in T\}} \{\tau~|~S\subseteq\tau\subseteq \sigma\cup S\}$.
Then $K$ is homotopy equivalent to $L$ via a sequence of simplicial collapses.
\end{lemma}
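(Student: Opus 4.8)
The plan is to induct on $|T|$, which is finite since $K$ is finite, removing the blocks of $T$ one at a time and reducing each step to the single‑collapse criterion recorded just before the statement: if $M$ is a subcomplex of a finite simplicial complex $N$, $S$ is a simplex of $N$, and $\sigma$ is a simplex of $M$ disjoint from $S$ with $N=M\cup\{\tau\mid S\subseteq\tau\subseteq\sigma\cup S\}$ and $M\cap(\sigma*S)=\sigma*\dot S$, then $N$ is obtained from $M$ by a simplicial collapse. For $S\in T$ write $R_S:=\{\tau\mid S\subseteq\tau\subseteq\sigma\cup S\}$, so $K=L\cup\bigcup_{S\in T}R_S$. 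If $T=\emptyset$ then $K=L$ and we are done; otherwise choose $S\in T$ that is \emph{maximal with respect to inclusion} among the elements of $T$, and set $L':=K\setminus R_S$.

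Two observations drive the argument. First, \emph{disjointness}: the sets $R_{S'}$, $S'\in T$, are pairwise disjoint and each is disjoint from $L$, because if $\tau\in R_{S'}$ then $\tau\setminus\sigma=S'$ (as $S'\cap\sigma=\emptyset$), so $R_{S'}\cap R_{S''}\neq\emptyset$ forces $S'=S''$, and $\tau\in L$ would give $S'\subseteq\tau$ with $S'\in L$, contradicting $S'\in T$. Second, \emph{$L'$ is a subcomplex of $K$}: any coface $\rho\supseteq S$ in $K$ is not in $L$ (else $S\in L$), so $\rho\in R_{S''}$ for some $S''\in T$, whence $S\subseteq\rho\subseteq\sigma\cup S''$ gives $S\subseteq S''$, hence $S=S''$ by maximality and $\rho\in R_S$; since every coface of $S$ in $K$ then lies in $R_S$, deleting $R_S$ cannot strand a simplex one of whose faces was deleted, so $L'$ is a finite simplicial complex, and by disjointness $L\subseteq L'$ (in particular $\sigma\in L'$).

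Now apply the criterion with $N=K$, $M=L'$, and the chosen $S,\sigma$ (disjoint, with $\sigma\in L'$). The identity $K=L'\cup R_S$ is immediate; and since $\sigma*S\subseteq K$ (all faces of the simplex $\sigma\cup S$ lie in $K$), a simplex $\sigma'\cup S'\in\sigma*S$ with $\sigma'\subseteq\sigma,\ S'\subseteq S$ lies in $R_S$ exactly when $S\subseteq\sigma'\cup S'$, i.e.\ exactly when $S'=S$, so $L'\cap(\sigma*S)=\{\sigma'\cup S'\mid\sigma'\subseteq\sigma,\ S'\subsetneq S\}=\sigma*\dot S$. Hence $L'$ is obtained from $K$ by a simplicial collapse, so $K\simeq L'$; this collapse need not be elementary, since $\dim(\sigma\cup S)-\dim S=\dim\sigma+1$ may exceed $1$, which is why the conclusion asserts only a sequence of (possibly non‑elementary) collapses. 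Finally, $L'$ together with the same $L$, $\sigma$ and with $T':=T\setminus\{S\}$ again satisfies the hypotheses: each $S'\in T'$ is a simplex of $K$ not in $L$ and not in $R_S$ (else $S'=S$), hence a simplex of $L'$ not in $L$, disjoint from $\sigma$, with $\sigma\cup S'\in K\setminus R_S=L'$; and removing the single block $R_S$ from $K=L\cup\bigcup_{S'\in T}R_{S'}$ leaves exactly $L\cup\bigcup_{S'\in T'}R_{S'}$ by disjointness. By the inductive hypothesis $L'$ is homotopy equivalent to $L$ via a sequence of simplicial collapses; prepending the collapse from $K$ to $L'$ gives the desired sequence from $K$ to $L$.

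The step I expect to require the most care is this inductive reduction, and specifically the choice of a maximal $S\in T$: without maximality, $L'=K\setminus R_S$ need not be a subcomplex — a coface $\sigma\cup S''$ of $S$ with $S\subsetneq S''$ could survive while its face $\sigma\cup S$ is deleted — and the single‑collapse criterion would not apply. Everything else is routine bookkeeping with the pairwise‑disjoint blocks $R_{S'}$.
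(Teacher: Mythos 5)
Your proof is correct and is essentially the paper's argument run in reverse: the paper orders the elements of $T$ by non-decreasing cardinality and adds the blocks $\{\tau\mid S_j\subseteq\tau\subseteq\sigma\cup S_j\}$ one at a time from $L$ up to $K$, which yields exactly the sequence of collapses you obtain by repeatedly peeling off an inclusion-maximal $S\in T$ from $K$. Your extra bookkeeping (pairwise disjointness of the blocks, and the check that $K\setminus R_S$ is a subcomplex) makes explicit some details the paper leaves implicit, but the key idea --- handle larger $S$ first so that $\sigma\cup S$ is the unique maximal coface of $S$ and the free-face condition $L'\cap(\sigma*S)=\sigma*\dot S$ holds --- is the same.
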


\begin{proof}
We mimic the proof of Lemma~3.9 in~\cite{BarmakMinian2008}, except that  we perform a sequence of simplicial collapses rather than elementary simplicial collapses.
Order the elements $S_1$, $S_2$, \ldots, $S_n$ of $T$ in such a way that for every $i$, $j$  with $i\le j$, we have $|S_i|\le|S_j|$.
Define $\displaystyle K_i=L\cup \bigcup {}_{j=1}^i\{\tau~|~S_j\subseteq\tau\subseteq \sigma\cup S_j\}$ for $0\le i\le n$.
Let $S\subsetneq S_i$.
If $S\in T$, then $\sigma\cup S\in K_{i-1}$ since $|S|<|S_i|$.
If $S\notin T$, then $\sigma \cup S$ is in $L \subseteq K_{i-1}$.
This proves that $K_{i-1}\cap (\sigma * S_i) = \sigma * \dot{S_i}$, and so $S_i$ is the free face of a simplicial collapse from $K_i$ to $K_{i-1}$.
Then we are done since $K=K_n$ and $L=K_0$.
\end{proof}

We will use Lemma~\ref{lem:gen39} in the proof of Theorem~\ref{thm:general-gluing}.
One further generalization, Lemma~\ref{lem:twoplusplus}, is needed for the proof of the more general Theorem~\ref{thm:general-gluing-edited}.

\begin{remark}
Let $L$ be a simplicial complex, let $\Sigma$ be a subcomplex of $L$ that is collapsible, and let $u$ be a vertex not in $L$.
We claim that the space $K=L\cup(\Sigma*u)=L\cup\bigcup_{\sigma\in\Sigma}\{\sigma\cup u\}$ is homotopy equivalent to $L$.
Indeed, since $\Sigma$ is collapsible, let
\[
\sigma_0 \searrow \sigma_1,\ \sigma_2 \searrow \sigma_3,\ \ldots,\ \sigma_{2i} \searrow \sigma_{2i+1},\ \ldots,\ \sigma_{2k} \searrow \sigma_{2k+1}
\]
be a sequence of simplicial collapses from $\Sigma$ down to a single vertex $v$.
Here, $\sigma_{2i} \searrow \sigma_{2i+1}$ indicates that
$\sigma_{2i}$ is a free face of $\sigma_{2i+1}$, and that all simplices $\tau$ with $\sigma_{2i}\subseteq \tau \subseteq \sigma_{2i+1}$ are removed from $\Sigma$ via a (possibly non-elementary) simplicial collapse.
Then in $K$ we can perform the sequence of simplicial collapses
\[
\sigma_0\cup\{u\} \searrow \sigma_1\cup\{u\},\ \sigma_2\cup\{u\} \searrow \sigma_3\cup\{u\},\ \ldots,\ \sigma_{2k}\cup\{u\} \searrow \sigma_{2k+1}\cup\{u\},
\ \{u\}\searrow\{v,u\}
\]
giving a simplicial collapse from $K$ down to $L$.
Indeed, $\sigma_{2i}\cup\{u\}$ is a free face of $\sigma_{2i+1}\cup\{u\}$ in the relevant stage of the collapse since $\sigma_{2i}$ is a free face of $\sigma_{2i+1}$ in the collapse of $\Sigma$. 
Furthermore, once these collapses have been made, $\{u\}$ becomes a free face of $\{v,u\}$ (and thus we have the last simplicial collapse $\{u\}\searrow\{v,u\}$ in the sequence above).
Once all simplices containing $u$ have been removed, we have collapsed from $K$ down to $L$.
\end{remark}

We generalize the above remark in the following lemma.
See Figure~\ref{fig:collapse} (right) for an illustration.

\begin{lemma}
\label{lem:twoplusplus}
Let $L$ be a subcomplex of a finite simplicial complex $K$, and suppose $\Sigma$ is a subcomplex of $L$ that is collapsible.
Let $T$ be a set of simplices of $K$ that are not in $L$ and that are disjoint from $\Sigma$, but such that $\sigma \cup S$ is a simplex of $K$ for every $S\in T$ and $\sigma \in \Sigma$.
Finally, suppose $K = L \cup\bigcup_{\sigma\in\Sigma}\bigcup_{S \in T} \{\tau~|~S\subseteq\tau\subseteq \sigma\cup S\}$.
Then $K$ is homotopy equivalent to $L$ via a sequence of simplicial collapses.
\end{lemma}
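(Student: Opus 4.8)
The plan is to reduce Lemma~\ref{lem:twoplusplus} to the already-proven Lemma~\ref{lem:gen39} by induction on the number of simplicial collapses needed to collapse $\Sigma$ to a point. The Remark preceding the statement already treats the special case $T=\{\{u\}\}$ by peeling off the collapses of $\Sigma$ one step at a time and performing the ``same'' collapse on the simplices $\sigma_{2i}\cup\{u\}$; here we do exactly this, but with Lemma~\ref{lem:gen39} — rather than a single elementary collapse of one vertex — handling each peeled-off step, which is precisely what allows an arbitrary family $T$ in place of a single vertex.

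First I would record the one structural fact that everything rests on: since each $S\in T$ is vertex-disjoint from $\Sigma$, every simplex $\rho\in K\setminus L$ has a \emph{unique} expression $\rho=S\cup\tau$ with $S\in T$ and $\tau\in\Sigma$, namely $S=\rho\setminus V(\Sigma)$ and $\tau=\rho\cap V(\Sigma)$. The base case is $\Sigma=\{v\}$ a single vertex: then the defining equation for $K$ reduces to $K=L\cup\bigcup_{S\in T}\{\tau\mid S\subseteq\tau\subseteq\{v\}\cup S\}$, which is exactly the hypothesis of Lemma~\ref{lem:gen39} with ``$\sigma$''$=\{v\}$, so $K\simeq L$ via simplicial collapses. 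For the inductive step, fix a collapsing sequence for $\Sigma$ and let $\sigma_0\searrow\sigma_1$ be its first step, so $\sigma_0$ is a free face (necessarily nonempty) of the maximal simplex $\sigma_1$ of $\Sigma$, and $\Sigma':=\Sigma\setminus\{\tau\mid\sigma_0\subseteq\tau\subseteq\sigma_1\}$ is collapsible in one fewer step. Put $K':=L\cup\bigcup_{\sigma\in\Sigma'}\bigcup_{S\in T}\{\tau\mid S\subseteq\tau\subseteq\sigma\cup S\}$. I then prove two things: (i) $K'$ is a subcomplex of $K$ satisfying the hypotheses of the lemma relative to $(\Sigma',T,L)$ — these are immediate from the unique-decomposition fact, since $K'$ is just $L$ together with those $\rho\in K\setminus L$ whose $\tau$-part lies in $\Sigma'$, and this set is downward closed because $\Sigma'$ is — so $K'\simeq L$ by the inductive hypothesis; and (ii) $K$ collapses onto $K'$ by a single application of Lemma~\ref{lem:gen39}. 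Concatenating the two collapsing sequences yields $K\simeq L$.

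For step (ii), the unique-decomposition fact gives $K\setminus K'=\{S\cup\tau\mid S\in T,\ \sigma_0\subseteq\tau\subseteq\sigma_1\}=\bigcup_{S\in T}\{\rho\mid S\cup\sigma_0\subseteq\rho\subseteq S\cup\sigma_1\}$, so we invoke Lemma~\ref{lem:gen39} with its ``$L$'' taken to be $K'$, its simplex ``$\sigma$'' taken to be $\sigma_1\setminus\sigma_0$ (which lies in $\Sigma'\subseteq K'$, since $\sigma_0\neq\emptyset$ means it is not among the removed faces), and its family ``$T$'' taken to be $\{S\cup\sigma_0\mid S\in T\}$; the members of this family are disjoint from $\sigma_1\setminus\sigma_0$, lie in $K$, and fail to lie in $K'$ precisely because $\sigma_0\notin\Sigma'$, so every hypothesis of Lemma~\ref{lem:gen39} is met and it produces the desired collapse $K\searrow K'$.

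I expect the only real work to be organizational rather than conceptual: verifying, at each of the two stages, that the interlocking disjointness and membership conditions of Lemma~\ref{lem:gen39} and of the inductive hypothesis genuinely hold — in particular that $K'$ really is a subcomplex of $K$, and that the candidate free faces $S\cup\sigma_0$ are absent from $K'$. Both reduce to the unique-decomposition observation together with the standard fact that the free faces used in a collapse toward a point are nonempty. There is no topological content beyond Lemma~\ref{lem:gen39} itself.
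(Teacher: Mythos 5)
Your proof is correct, but it is organized differently from the paper's: you transpose the two nested inductions. The paper first handles the special case $T=\{S\}$ by hand (transporting the collapse sequence of $\Sigma$ to a collapse of $\Sigma*S$ onto $\Sigma*\dot{S}$, exactly as in the preceding Remark), and then runs an outer induction over the elements of $T$ ordered by cardinality, showing $K_{i-1}\cap(\Sigma*S_i)=\Sigma*\dot{S_i}$ at each step. You instead induct on the length of a collapsing sequence for $\Sigma$, peeling off one free-face pair $\sigma_0\searrow\sigma_1$ at a time and disposing of \emph{all} of $T$ simultaneously at each stage by invoking Lemma~\ref{lem:gen39} as a black box with simplex $\sigma_1\setminus\sigma_0$ and family $\{S\cup\sigma_0\mid S\in T\}$; the size-ordering of $T$ is then hidden inside the proof of Lemma~\ref{lem:gen39} rather than appearing explicitly. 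The key bookkeeping facts you rely on all check out: the unique decomposition $\rho=(\rho\setminus V(\Sigma))\cup(\rho\cap V(\Sigma))$ of simplices of $K\setminus L$ (which also guarantees that $K'$ is a genuine subcomplex and that the simplices $S\cup\sigma_0$ are absent from $K'$), the nonemptiness of free faces in a collapse to a point (needed so that $\sigma_1\setminus\sigma_0$ survives into $\Sigma'\subseteq L$), and the identity $\{S\cup\tau\mid\sigma_0\subseteq\tau\subseteq\sigma_1\}=\{\rho\mid S\cup\sigma_0\subseteq\rho\subseteq S\cup\sigma_1\}$. What your route buys is modularity — every free-face verification is delegated to the already-proven Lemma~\ref{lem:gen39}, so no collapse has to be exhibited by hand — at the cost of obscuring the geometrically transparent single-simplex case $K=L\cup(\Sigma*S)$ that the paper isolates and illustrates in Figure~\ref{fig:collapse}; the resulting sequences of elementary collapses are the same moves performed in a different order.
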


\begin{proof}
We begin with the case when $T=\{S\}$ consists of a single simplex $S$.
In this case, the condition $K = L \cup\bigcup_{\sigma\in\Sigma} \{\tau~|~S\subseteq\tau\subseteq \sigma\cup S\}$ is equivalent to saying that $\Sigma*\dot{S}$ is a subcomplex of $L$ and $K=L\cup(\Sigma*S)$.
The same argument as in the remark above holds; however, note that we collapse $\Sigma*S\subseteq K$ down to $\Sigma*\dot{S}\subseteq L$, rather than down to $\Sigma$.
In particular, since $\Sigma$ is collapsible, let
\[
\sigma_0 \searrow \sigma_1,\ \sigma_2 \searrow \sigma_3,\ \ldots,\ \sigma_{k-1} \searrow \sigma_k
\]
be a sequence of simplicial collapses from $\Sigma$ down to a single vertex $v$.
Then in $K$ we can perform the sequence of simplicial collapses
\[\sigma_0\cup S \searrow \sigma_1\cup S,\ \sigma_2\cup S \searrow \sigma_3\cup S,\ \ldots,\ \sigma_{k-1}\cup S \searrow \sigma_k\cup S,\ S\searrow \{v\}\cup S\]
yielding a simplicial collapse from $K$ down to $L$.
Indeed, $\sigma_{2i}\cup S$ is a free face of $\sigma_{2i+1}\cup S$ in the relevant stage of the collapse since $\sigma_{2i}$ is a free face of $\sigma_{2i+1}$ in the collapse of $\Sigma$.
Furthermore, once these collapses have been made, $S$ becomes a free face of $\{v\}\cup S$ and thus we can perform simplicial collapse $S\searrow \{v\}\cup S$.
Now that all simplices containing all of $S$ have been removed, we have collapsed from $K$ down to $L$.

We move to the general setting where $T$ may consist of more than a single simplex.
Order the elements $S_1$, $S_2$, \ldots, $S_n$ of $T$ in such a way that for every $i$, $j$  with $i\le j$, we have $|S_i|\le|S_j|$.
Define $K_i=L\cup\bigcup_{\sigma\in\Sigma} \bigcup {}_{j=1}^i\{\tau~|~S_j\subseteq\tau\subseteq \sigma\cup S_j\}$ for $0\le i\le n$.
Let $S\subsetneq S_i$.
If $S\in T$, then $\sigma\cup S\in K_{i-1}$ for all $\sigma\in\Sigma$ since $|S|<|S_i|$.
If $S\notin T$, then $\sigma \cup S$ is in $L \subseteq K_{i-1}$ for all $\sigma\in\Sigma$.
This proves that $K_{i-1}\cap (\Sigma * S_i) = \Sigma * \dot{S_i}$, and so $K_i$ simplicially collapses down onto $K_{i-1}$ by the special case in the prior paragraph.
This completes the proof since $K=K_n$ and $L=K_0$.
\end{proof}

The next lemma will be useful when passing from gluings of finite metric spaces to gluings of arbitrary metric spaces.
Let $\pi_0(Y)$ denote the set of path-connected components of a topological space $Y$.
\begin{lemma}
\label{lem:compactness-Quillen}
Let $g\colon L\to K$ be a simplicial map between (possibly infinite) simplicial complexes.
Let the vertex set of $L$ be $V$, and let the vertex set of $K$ be $W$.
Suppose that for every finite $V_0\subseteq V$ and finite $W_0\subseteq W$, there exists a finite subset $V_1$ with $V_0\subseteq V_1\subseteq V$ and $W_0\subseteq g(V_1)\subseteq W$ such that the induced map $g_1\colon L[V_1]\to K[g(V_1)]$ is a homotopy equivalence.
Then $g\colon L\to K$ is a homotopy equivalence.
\end{lemma}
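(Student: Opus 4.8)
The plan is to use the fact that simplicial complexes are CW-complexes, so their homotopy type is detected by the compact subsets, which in a simplicial complex are contained in finite subcomplexes. Concretely, I would show $g$ induces isomorphisms on all homotopy groups and then invoke Whitehead's theorem. Fix a basepoint $v_* \in V$ and consider a homotopy group $\pi_n(L, v_*)$ and $\pi_n(K, g(v_*))$; the case $n=0$ (bijection on path components) is handled the same way with the convention that a ``map'' $S^0 \to L$ is just a choice of point. Since $S^n$ is compact, any map $\alpha\colon S^n \to L$ has image contained in a finite subcomplex, hence in $L[V_0]$ for some finite $V_0 \subseteq V$ (after enlarging $V_0$ to include $v_*$); similarly any nullhomotopy in $K$ lives in $K[W_0]$ for some finite $W_0 \subseteq W$.

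The key step is then the following: given a class $[\alpha] \in \pi_n(L, v_*)$, pick finite $V_0 \ni v_*$ with $\alpha(S^n) \subseteq L[V_0]$, set $W_0 = g(V_0)$, and apply the hypothesis to obtain a finite $V_1$ with $V_0 \subseteq V_1 \subseteq V$, $W_0 \subseteq g(V_1)$, and $g_1\colon L[V_1] \to K[g(V_1)]$ a homotopy equivalence. If $g_*[\alpha] = 0$ in $\pi_n(K, g(v_*))$, a nullhomotopy $H$ of $g\circ\alpha$ in $K$ lands in some finite subcomplex; after enlarging $V_1$ once more (re-applying the hypothesis with $V_0' = V_1$ and $W_0'$ chosen to contain the vertices of that subcomplex together with $g(V_1)$), we may assume the nullhomotopy of $g_1 \circ \alpha$ already takes place inside $K[g(V_1)]$. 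Since $g_1$ is a homotopy equivalence, $(g_1)_*\colon \pi_n(L[V_1]) \to \pi_n(K[g(V_1)])$ is an isomorphism, so $\alpha$ is already nullhomotopic in $L[V_1] \subseteq L$; hence $g_*$ is injective. Surjectivity of $g_*$ is analogous: a class in $\pi_n(K, g(v_*))$ is represented by a map into some $K[W_0]$, and after applying the hypothesis (with $V_0 = \{v_*\}$ and this $W_0$) it lies in the image of $(g_1)_*$ for a homotopy equivalence $g_1$, hence in the image of $g_*$. The same bookkeeping with $n = 0$ shows $\pi_0(L) \to \pi_0(K)$ is a bijection. By the Whitehead theorem, $g$ is a homotopy equivalence.

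The main obstacle is purely organizational rather than conceptual: one must be careful that the finite vertex sets can always be enlarged \emph{simultaneously} on both sides so that the hypothesis applies — i.e. whenever we need $V_1$ to contain some new finite set of vertices and $g(V_1)$ to contain some new finite set of vertices of $K$, we invoke the hypothesis with the union of the old and new data as $(V_0, W_0)$. One should also note that $g(V_1)$ need not be all of $W$, but that is fine: the maps $\alpha$ and the nullhomotopies we track are compact, so finitely many vertices always suffice, and the induced inclusions $L[V_1] \hookrightarrow L$, $K[g(V_1)] \hookrightarrow K$ together with $g_1$ fit into a strictly commuting square, which is all that is needed to transport the (null)homotopies. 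A minor point worth stating is that for $n = 0$ we interpret ``homotopy equivalence $g_1$'' as inducing a bijection on $\pi_0$, which is part of being a homotopy equivalence, so no separate argument is required.
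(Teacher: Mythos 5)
Your proof is correct and follows essentially the same route as the paper: a compactness argument showing that maps from spheres and null-homotopies from balls factor through finite subcomplexes, combined with the hypothesis to get isomorphisms on all homotopy groups, and then Whitehead's theorem. The extra bookkeeping you note about enlarging $V_0$ and $W_0$ simultaneously is exactly how the paper's hypothesis is used as well.
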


\begin{proof}
We will use a compactness argument to show that the induced mapping on homotopy groups $g_* \colon \pi_k(L,b) \to \pi_k(K,g(b))$ is an isomorphism for all $k\ge 0$ and for any basepoint $b$ in the geometric realization of $L$.
The conclusion then follows from Whitehead's theorem~\cite[Theorem~4.5]{Hatcher2002}.

First, suppose we have a based map $f \colon \bS^k \to K$ where $\bS^k$ is the $k$-dimensional sphere.
Since $f$ is continuous and $\bS^k$ is compact, it follows that $f(\bS^k)$ is compact in $K$.
Then by~\cite[Proposition~A.1]{Hatcher2002} we know that $f(\bS^k)$ is contained in a finite subcomplex of $K$.
Therefore, there exists a finite subset $W_0\subseteq W$ so that $f$ factors through $K[W_0] \subseteq K$.
By assumption, there exists a finite subset $V_1$ with $V_1\subseteq V$ and $W_0\subseteq g(V_1)\subseteq W$ such that the map $g_1\colon L[V_1]\to K[g(V_1)]$ is a homotopy equivalence.
Thus, we can find a based map $\widetilde{f} \colon \bS^k \to L[V_1]$ such that $[g_1 \widetilde{f}]=[f] \in \pi_k(K[g(V_1)],g(b))$ and hence $[g \widetilde{f}]=[f] \in \pi_k(K,g(b))$.
This proves that $g_*$ is surjective.

Next, suppose that $f \colon \bS^k \to L$ is a based map such that $g f \colon \bS^k \to K$ is null-homotopic.
Let $F \colon \B^{k+1} \to K$ be a null-homotopy between $g f$ and the constant map, where $\B^{k+1}$ is the $(k+1)$-dimensional ball.
By compactness of $\bS^k$ and $\B^{k+1}$, we can find finite subsets $V_0\subseteq V$ and $W_0\subseteq W$ such that $f$ factors through $L[V_0]$ and $F$ factors through $K[W_0]$.
By assumption, there exists a finite subset $V_1$ with $V_0\subseteq V_1\subseteq V$ and $W_0\subseteq g(V_1)\subseteq W$ such that the map $g_1\colon L[V_1]\to K[g(V_1)]$ is a homotopy equivalence.
Note that $g_1 f \colon \bS^k \to K[V_1]$ is null-homotopic via $F$, and since the map $g_1$ is a homotopy equivalence, it follows that $f$ is null-homotopic, and thus $g_*$ is injective.
\end{proof}

By taking $g\colon L\to K$ to be the inclusion of a subcomplex $L$ into a simplicial complex $K$ on the same vertex set, we obtain the following corollary.
This corollary will be the version we most frequently use when passing from wedge sums or gluings of finite metric spaces to wedge sums or gluings of arbitrary metric spaces.
However, Lemma~\ref{lem:compactness-Quillen} in its full generality is needed in Section~\ref{sec:sup}.

\begin{corollary}
\label{cor:compactness}
Let $K$ be a (possibly infinite) simplicial complex with vertex set $V$, and let $L$ be a subcomplex also with vertex set $V$.
Suppose that for every finite $V_0\subseteq V$, there exists a finite subset $V_1$ with $V_0\subseteq V_1\subseteq V$ such that the inclusion $L[V_1]\hookrightarrow K[V_1]$ is a homotopy equivalence.
Then the inclusion map $\iota\colon L\hookrightarrow K$ is a homotopy equivalence.
\end{corollary}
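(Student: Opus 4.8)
\begin{proof}[Proof of Corollary~\ref{cor:compactness}]
The plan is to apply Lemma~\ref{lem:compactness-Quillen} to the inclusion map $g=\iota\colon L\hookrightarrow K$. Since $L$ and $K$ have the same vertex set $V$, in the notation of Lemma~\ref{lem:compactness-Quillen} we have $W=V$ and $g$ acts as the identity on vertices, so that $g(V_1)=V_1$ and $K[g(V_1)]=K[V_1]$ for every $V_1\subseteq V$. It therefore suffices to verify the hypothesis of Lemma~\ref{lem:compactness-Quillen}: for every finite $V_0\subseteq V$ and every finite $W_0\subseteq W=V$, there is a finite $V_1$ with $V_0\subseteq V_1\subseteq V$ and $W_0\subseteq V_1$ for which the inclusion $L[V_1]\hookrightarrow K[V_1]$ is a homotopy equivalence.

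Given finite subsets $V_0,W_0\subseteq V$, set $V_0'=V_0\cup W_0$, which is again a finite subset of $V$. By the hypothesis of the corollary applied to $V_0'$, there exists a finite subset $V_1$ with $V_0'\subseteq V_1\subseteq V$ such that the inclusion $L[V_1]\hookrightarrow K[V_1]$ is a homotopy equivalence. Then $V_0\subseteq V_0'\subseteq V_1$ and $W_0\subseteq V_0'\subseteq V_1=g(V_1)$, and the induced map $g_1\colon L[V_1]\to K[g(V_1)]$ is precisely this inclusion, hence a homotopy equivalence. Thus the hypothesis of Lemma~\ref{lem:compactness-Quillen} is satisfied, and we conclude that $\iota\colon L\hookrightarrow K$ is a homotopy equivalence.
\end{proof}

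The only point requiring any care is matching the two separate finite sets $V_0$ and $W_0$ appearing in Lemma~\ref{lem:compactness-Quillen} against the single finite set supplied by the hypothesis of the corollary; this is handled immediately by passing to their union. No genuine obstacle arises, since the corollary is essentially just the special case $W=V$, $g=\iota$ of the lemma.
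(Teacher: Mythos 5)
Your proof is correct and matches the paper's argument: both reduce to Lemma~\ref{lem:compactness-Quillen} with $g=\iota$ and $W=V$, handling the two finite sets $V_0$ and $W_0$ by applying the corollary's hypothesis to their union. Your write-up is just a slightly more explicit version of the same reduction.
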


\begin{proof}
The hypothesis in Lemma~\ref{lem:compactness-Quillen} about the existence of the subset $V_1$ follows from the analogous hypothesis in Corollary~\ref{cor:compactness}.
Indeed, choose $V_0$ in Corollary~\ref{cor:compactness} to be the union of $V_0$ and $W_0$, which is in $V$ since $K$ and $L$ have the same vertex sets, in order to obtain a set $V_1$ satisfying the hypothesis of Lemma~\ref{lem:compactness-Quillen}.
\end{proof}

\subsection{Vietoris--Rips and \v{C}ech complexes of wedge sums}\label{sec:wedge}

As a warm-up, we first show in this subsection that the Vietoris--Rips complex of a metric wedge sum (i.e, gluing along a single point) is homotopy equivalent to the wedge sum of the Vietoris--Rips complexes.
In the next subsection, Proposition~\ref{prop:rips_wedge} will be extended in Theorem~\ref{thm:graph-gluing} to gluings of metric graphs along short paths.
Intuitively, such results allow us to characterize the topology of a bigger space via the topology of smaller individual pieces.

Given pointed metric spaces $X$ and $Y$, we use the symbol $b\in X\vee Y$ to denote the point corresponding to the identified distinguished basepoints $b_X\in X$ and $b_Y\in Y$.

\begin{proposition}\label{prop:rips_wedge}
For $X$ and $Y$ pointed metric spaces and $r>0$, we have the homotopy
equivalence
$ \vr{X}{r} \vee \vr{Y}{r} \simeq \vr{X\vee Y}{r}.$
\end{proposition}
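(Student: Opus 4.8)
The plan is to first understand the combinatorial structure of $\vr{X\vee Y}{r}$. A finite subset $\sigma\subseteq X\vee Y$ has diameter at most $r$ if and only if, writing $\sigma_X=\sigma\cap X$ and $\sigma_Y=\sigma\cap Y$ (with the shared point $b$ placed in either, say both), every pair of points within $\sigma_X$ is at distance $\le r$, every pair within $\sigma_Y$ is at distance $\le r$, and every cross pair $s\in\sigma_X$, $t\in\sigma_Y$ satisfies $d_X(s,b)+d_Y(b,t)\le r$. The key observation is that a cross pair has distance $\le r$ if and only if $s$ and $b$ together have diameter $\le r$ in $X$ \emph{and} $t$ and $b$ together have diameter $\le r$ in $Y$ and moreover the two radii sum correctly; but crucially, if $\sigma$ contains points of both $X\setminus\{b\}$ and $Y\setminus\{b\}$, then $\diam(\sigma)\le r$ forces $b\in\sigma$ is ``close to everything,'' so $b\cup\sigma_X$ has diameter $\le r$ in $X$ and $b\cup\sigma_Y$ has diameter $\le r$ in $Y$. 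Thus every simplex of $\vr{X\vee Y}{r}$ lies in $\vr{X}{r}$, lies in $\vr{Y}{r}$, or is a join $\tau_X\cup\tau_Y$ where $\tau_X\cup\{b\}\in\vr{X}{r}$ and $\tau_Y\cup\{b\}\in\vr{Y}{r}$.

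Next I would identify $\vr{X}{r}\vee\vr{Y}{r}$ — the wedge at the vertex $b$ — as a subcomplex $L$ of $K:=\vr{X\vee Y}{r}$ on the same vertex set: $L$ consists of those simplices of $K$ contained entirely in $X$ or entirely in $Y$. The goal is then to collapse $K$ down to $L$. The simplices of $K$ not in $L$ are exactly the ``crossing'' simplices $\tau_X\cup\tau_Y$ with $\tau_X\subseteq X\setminus\{b\}$ nonempty, $\tau_Y\subseteq Y\setminus\{b\}$ nonempty, $\tau_X\cup\{b\}\in\vr{X}{r}$, and $\tau_Y\cup\{b\}\in\vr{Y}{r}$. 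I would attempt to match each such simplex with the vertex $b$: every crossing simplex $\rho$ either contains $b$ or can be paired with $\rho\cup\{b\}$. This is precisely the shape of Lemma~\ref{lem:39} (with $a=b$): take $L$ as above, take $T$ to be the set of crossing simplices not containing $b$, and observe $b$ is a vertex of $L$ contained in no simplex of $T$, that $b\cup S\in K$ for each $S\in T$ (since $\tau_X\cup\{b\}$ is a simplex of $\vr{X}{r}$ and $\tau_Y\cup\{b\}$ of $\vr{Y}{r}$, their union is a simplex of $K$), and that $K=L\cup\bigcup_{S\in T}\{S,b\cup S\}$. Lemma~\ref{lem:39} then gives $K\simeq L$ in the finite case.

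For arbitrary (infinite) pointed metric spaces, I would pass to the finite case via Corollary~\ref{cor:compactness}: for any finite $V_0\subseteq X\vee Y$, enlarge it to $V_1=V_0\cup\{b\}$, which is finite; then $L[V_1]$ is the wedge (at $b$) of $\vr{V_1\cap X}{r}$ and $\vr{V_1\cap Y}{r}$, and $K[V_1]=\vr{V_1}{r}$ is the Vietoris--Rips complex of the finite pointed metric space $V_1$, so the finite-case argument shows $L[V_1]\hookrightarrow K[V_1]$ is a homotopy equivalence. Corollary~\ref{cor:compactness} then upgrades this to the homotopy equivalence $L\hookrightarrow K$, i.e., $\vr{X}{r}\vee\vr{Y}{r}\simeq\vr{X\vee Y}{r}$, and unwinding the wedge-sum conventions (a wedge of simplicial complexes at a common vertex $b$ realizes the topological wedge sum) finishes the proof.

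The main obstacle I anticipate is the combinatorial core: verifying cleanly that $\diam(\sigma)\le r$ for a set meeting both sides forces both $b\cup\sigma_X\in\vr{X}{r}$ and $b\cup\sigma_Y\in\vr{Y}{r}$, and conversely that these two conditions (with at least the cross-distance inequalities) characterize exactly the simplices of $K$ — in other words, pinning down that $K=L\cup(\text{joins over }b)$ precisely. Once that structural description is in hand, invoking Lemma~\ref{lem:39} and Corollary~\ref{cor:compactness} is essentially formal; care is only needed with the degenerate cases where $\sigma_X$ or $\sigma_Y$ is empty or equals $\{b\}$, and with the open-versus-closed and $<$-versus-$\le$ conventions, which go through identically since the argument never uses anything beyond the triangle-type identity defining $d_{X\vee Y}$.
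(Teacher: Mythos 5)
Your proposal is correct and follows essentially the same route as the paper: in the finite case it applies Lemma~\ref{lem:39} with $L=\vr{X}{r}\vee\vr{Y}{r}$, $K=\vr{X\vee Y}{r}$, $a=b$, and $T$ the crossing simplices not containing $b$, using the fact that $d(s,t)=d_X(s,b)+d_Y(b,t)\le r$ forces $b\cup\sigma\in K$; it then passes to arbitrary pointed metric spaces via Corollary~\ref{cor:compactness} exactly as the paper does.
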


\begin{proof}
We first consider the case where $X$ and $Y$ are finite.
We apply Lemma~\ref{lem:39} with $L = \vr{X}{r} \vee \vr{Y}{r}$, with $K=\vr{X\vee Y}{r}$, with $T = \{ \sigma \in K \setminus L~|~b\notin \sigma\}$, and with basepoint $b\in X\vee Y$ serving the role as $a$.
It is easy to check the conditions on $K, L$, and $T$ required by Lemma~\ref{lem:39} are satisfied.
Furthermore, if $\sigma \in T$, then at least one vertex of $X \setminus \{b_X\}$ and one vertex of $Y \setminus \{b_Y\}$ are in $\sigma$.
Hence $\diam(b \cup \sigma) \leq r$ and $b \cup \sigma$ is a simplex of $K$.
Since $K = L \cup \bigcup_{\sigma \in T}\{\sigma, b \cup \sigma \}$, Lemma~\ref{lem:39} implies $L \simeq K$.

Now let $X$ and $Y$ be arbitrary (possibly infinite) pointed metric spaces.
For finite subsets $X_0\subseteq X$ and $Y_0\subseteq Y$ with $b_X\in X_0$ and $b_Y\in Y_0$, the finite case guarantees that $\vr{X_0}{r}\vee\vr{Y_0}{r}\simeq\vr{X_0\vee Y_0}{r}$.
Therefore, we can apply Corollary~\ref{cor:compactness} with $L=\vr{X}{r}\vee\vr{Y}{r}$ and $K=\vr{X\vee Y}{r}$.
\end{proof}

Proposition~\ref{prop:rips_wedge}, in the case of finite metric spaces, is also obtained in~\cite{LesnickRabadanRosenbloom}.

\begin{corollary}\label{cor:rips_wedge}
Let $X$ and $Y$ be pointed metric spaces.
For any homological dimension $i\ge0$ and field $k$, the persistence modules $\PH_i(\vr{X}{r} \vee \vr{Y}{r};k)$ and $\PH_i(\vr{X\vee Y}{r};k)$ are isomorphic.
\end{corollary}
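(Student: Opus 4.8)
The plan is to deduce the isomorphism of persistence modules directly from the homotopy equivalence established in Proposition~\ref{prop:rips_wedge}, but with the essential extra ingredient that the homotopy equivalence is \emph{natural} with respect to the inclusions induced by increasing the scale parameter. So first I would observe that for $r \le r'$ we have inclusions $\vr{X}{r}\vee\vr{Y}{r}\hookrightarrow\vr{X}{r'}\vee\vr{Y}{r'}$ and $\vr{X\vee Y}{r}\hookrightarrow\vr{X\vee Y}{r'}$, and that the two filtered simplicial complexes $\{\vr{X}{r}\vee\vr{Y}{r}\}_{r\ge0}$ and $\{\vr{X\vee Y}{r}\}_{r\ge0}$ share the same vertex set $X\vee Y$ at every scale, with the former a subcomplex of the latter. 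Applying homology $\PH_i(-;k)$ to the levelwise inclusion map $\iota_r\colon \vr{X}{r}\vee\vr{Y}{r}\hookrightarrow\vr{X\vee Y}{r}$ gives a morphism of persistence modules, because inclusion is compatible with the bonding maps; the content is then that each $\iota_r$ induces an isomorphism on $H_i(-;k)$, which is exactly what Proposition~\ref{prop:rips_wedge} provides (a homotopy equivalence induces isomorphisms on all homology groups, with any coefficients).

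Concretely, the key steps in order are: (1) fix $i\ge0$ and a field $k$; (2) for each $r>0$ record that the inclusion $\iota_r$ is a homotopy equivalence by Proposition~\ref{prop:rips_wedge} (and for $r=0$ both complexes are the discrete vertex set $X\vee Y$, so $\iota_0$ is the identity); (3) note that for $r\le r'$ the square
\[
\begin{tikzcd}
\vr{X}{r}\vee\vr{Y}{r} \arrow[r,hook]\arrow[d,hook] & \vr{X\vee Y}{r}\arrow[d,hook]\\
\vr{X}{r'}\vee\vr{Y}{r'} \arrow[r,hook] & \vr{X\vee Y}{r'}
\end{tikzcd}
\]
commutes on the nose (all four maps are inclusions of subcomplexes sharing vertex sets), hence applying $H_i(-;k)$ yields a commuting square of $k$-vector spaces; (4) conclude that $\{\,\PH_i(\iota_r;k)\,\}_r$ is a morphism of persistence modules that is a levelwise isomorphism, hence an isomorphism in the category of persistence modules (by \cite[Section~2.3]{ChazalDe-SilvaGlisse2016}, where morphisms are commuting diagrams and an isomorphism is precisely a levelwise-invertible one).

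The only subtlety — and the step I would be most careful about — is making sure the homotopy equivalence in Proposition~\ref{prop:rips_wedge} can be taken to be the inclusion $\iota_r$ itself, rather than merely some abstract homotopy equivalence, so that naturality with the bonding maps is automatic. Reading the proof of Proposition~\ref{prop:rips_wedge}, this is indeed the case: in the finite setting Lemma~\ref{lem:39} shows that $L=\vr{X}{r}\vee\vr{Y}{r}$ is obtained from $K=\vr{X\vee Y}{r}$ by simplicial collapses, so the \emph{inclusion} $L\hookrightarrow K$ is a homotopy equivalence; and in the infinite setting Corollary~\ref{cor:compactness} likewise certifies that the \emph{inclusion map} $L\hookrightarrow K$ is a homotopy equivalence. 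Hence $\iota_r$ itself is the homotopy equivalence, its effect on $H_i(-;k)$ is an isomorphism, and the commuting squares above require no choices. (One could alternatively phrase the whole argument by pushing forward the persistence module along the levelwise homotopy equivalences, but using that $\iota_r$ is literally a simplicial inclusion makes the commutativity transparent and avoids any coherence bookkeeping.) This completes the proof.
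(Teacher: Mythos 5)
Your proof is correct and follows essentially the same route as the paper: apply Proposition~\ref{prop:rips_wedge} levelwise to the inclusion $\vr{X}{r}\vee\vr{Y}{r}\hookrightarrow\vr{X\vee Y}{r}$, note the commuting square of inclusions for $r\le r'$, and pass to homology to get a levelwise isomorphism of persistence modules. Your extra care in checking that the homotopy equivalence is the inclusion itself (so naturality is automatic) is exactly the point the paper's diagram encodes with its vertical hooked arrows marked $\simeq$.
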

\begin{proof}
For any $r<r'$ we have the following commutative diagram, where all maps are inclusions and where the vertical maps are homotopy equivalences by Proposition~\ref{prop:rips_wedge}:
\begin{center}
\begin{tikzcd}
\vr{X}{r}\vee \vr{Y}{r} \arrow[hookrightarrow]{r} \arrow[hookrightarrow,simeq]{d} & \vr{X}{r'}\vee \vr{Y}{r'} \arrow[hookrightarrow,simeq]{d} \\
\vr{X\vee Y}{r} \arrow[hookrightarrow]{r}& \vr{X\vee Y}{r'}
\end{tikzcd}
\end{center}
Applying homology gives a commutative diagram of homology groups, where the vertical maps are isomorphisms:
\begin{center}
\begin{tikzcd}
H_i(\vr{X}{r}\vee \vr{Y}{r}) \arrow{r} \arrow[Isom]{d} & H_i(\vr{X}{r'}\vee \vr{Y}{r'}) \arrow[Isom]{d} \\%
H_i(\vr{X\vee Y}{r}) \arrow{r} & H_i(\vr{X\vee Y}{r'})
\end{tikzcd}
\end{center}
It follows that $\PH_i(\vr{X}{r} \vee \vr{Y}{r};k)$ and $\PH_i(\vr{X\vee Y}{r};k)$ are isomorphic, and therefore have identical persistence diagrams whenever they're defined.\footnote{Not every persistence module with a real-valued filtration parameter has a corresponding persistence diagram, but isomorphic persistence modules have identical persistence diagrams when they're defined.
It follows from~\cite[Proposition~5.1]{ChazalSilvaOudot2013} that if $X$ is a totally bounded metric space, one can define a persistence diagram for $\PH_i(\vr{X}{r};k)$ and for $\PH_i(\cech{X}{X'}{r};k)$ (where $X\subseteq X'$).}
\end{proof}

We conclude this subsection with some remarks on the analogous results for \v{C}ech complexes.
For a submetric space $X \subseteq X'$, let $\cech{X}{X'}{r}$ be the ambient \v{C}ech complex with landmark set $X$ and witness set $X'$.
Note that if $X \subseteq X'$ and $Y \subseteq Y'$ are pointed with $b_X = b_{X'}$ and $b_Y = b_{Y'}$, then $X \vee Y$ is a submetric space of $X' \vee Y'$.

\begin{proposition}\label{prop:cech_wedge}
For $X \subseteq X'$ and $Y \subseteq Y'$ pointed metric spaces and $r>0$, we have the homotopy equivalence
$\cech{X}{X'}{r} \vee \cech{Y}{Y'}{r} \simeq \cech{X\vee Y}{X'\vee Y'}{r}.$
\end{proposition}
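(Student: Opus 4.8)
The plan is to mirror the proof of Proposition~\ref{prop:rips_wedge}, replacing diameter conditions with the nerve/ball-intersection description of the ambient \v{C}ech complex. First I would reduce to the finite case: when $X$, $X'$, $Y$, $Y'$ are all finite, I will apply Lemma~\ref{lem:39} with $L = \cech{X}{X'}{r} \vee \cech{Y}{Y'}{r}$, with $K = \cech{X\vee Y}{X'\vee Y'}{r}$, with $a = b$ the wedge point, and with $T = \{\sigma \in K \setminus L \mid b \notin \sigma\}$. The routine bookkeeping is to check that $L$ is genuinely a subcomplex of $K$ (a simplex of $\cech{X}{X'}{r}$ is witnessed by a point of $X' \subseteq X' \vee Y'$, and likewise for $Y$, so both wedge summands embed), that $T$ consists of simplices of $K$ not in $L$ none of which contains $b$, and that $K = L \cup \bigcup_{\sigma \in T}\{\sigma, b\cup\sigma\}$. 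The last identity holds because any simplex $\sigma \in K$ either contains $b$, or lies entirely in $X$ or entirely in $Y$ (hence is in $L$), or is a ``mixed'' simplex with vertices in both $X \setminus \{b\}$ and $Y \setminus \{b\}$ --- and such a mixed simplex $\sigma$ has the property that $b \cup \sigma$ is again a simplex of $K$, which is the one genuinely \v{C}ech-specific point to verify.

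The key step, then, is: \emph{if $\sigma \in T$ is witnessed in $X' \vee Y'$, then so is $b \cup \sigma$.} Write $\sigma = \sigma_X \sqcup \sigma_Y$ with $\emptyset \neq \sigma_X \subseteq X \setminus \{b\}$ and $\emptyset \neq \sigma_Y \subseteq Y \setminus \{b\}$. A witness $w \in X' \vee Y'$ for $\sigma$ satisfies $d_{X'\vee Y'}(w,x) \le r$ for all $x \in \sigma$; since $w$ lies in $X'$ or in $Y'$, say $w \in X'$, the points of $\sigma_Y$ force $r \ge d_{X'\vee Y'}(w, y) = d_{X'}(w, b) + d_{Y'}(b, y) \ge d_{X'}(w, b)$ for each $y \in \sigma_Y$. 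Hence $d_{X'\vee Y'}(w, b) = d_{X'}(w,b) \le r$, so $w$ also witnesses $b$, and therefore $w$ witnesses $b \cup \sigma$. (The case $w \in Y'$ is symmetric.) This is exactly the gluing-metric computation that makes the wedge behave well, and it is the only place the argument uses the definition of $d_{X'\vee Y'}$. With this in hand, Lemma~\ref{lem:39} yields $L \simeq K$ in the finite case.

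Finally I would pass to arbitrary (possibly infinite) $X \subseteq X'$ and $Y \subseteq Y'$ by the same compactness device used in Proposition~\ref{prop:rips_wedge}. Here there is one subtlety that I expect to be the main obstacle, or at least the point requiring care: the \v{C}ech complex has a landmark set and a distinct witness set, whereas Corollary~\ref{cor:compactness} is phrased for a subcomplex $L \subseteq K$ on a common vertex set, with finite subsets of that single vertex set. The landmark set of $K$ is $X \vee Y$ (which is also the vertex set of $L$), so Corollary~\ref{cor:compactness} does apply to the vertex sets; but when we restrict to a finite vertex subset $V_1 = X_0 \vee Y_0$ we must also restrict the witness sets to finite subsets $X_0' \subseteq X'$, $Y_0' \subseteq Y'$ large enough that the finitely many simplices involved in a given compact subcomplex are still witnessed. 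Concretely, rather than invoking Corollary~\ref{cor:compactness} as a black box I would either enlarge the ambient sets inside the compactness argument of Lemma~\ref{lem:compactness-Quillen} (choosing $X_0'$, $Y_0'$ to contain a chosen witness for each simplex in the relevant finite subcomplex, so that $\cech{X_0}{X_0'}{r}$ agrees with $\cech{X\vee Y}{X'\vee Y'}{r}$ on $V_1$), or simply observe that for finite $V_1$ one may take $X_0' = X'$, $Y_0' = Y'$ if these are themselves handled as direct limits --- but the cleanest route is the first. Once the finite-subcomplex comparison is arranged so that $L[V_1] \hookrightarrow K[V_1]$ coincides with the finite-case equivalence $\cech{X_0}{X_0'}{r}\vee\cech{Y_0}{Y_0'}{r} \simeq \cech{X_0\vee Y_0}{X_0'\vee Y_0'}{r}$, the corollary delivers the desired homotopy equivalence.
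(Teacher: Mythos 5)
Your proposal is correct and follows essentially the same route as the paper: Lemma~\ref{lem:39} applied to $L = \cech{X}{X'}{r} \vee \cech{Y}{Y'}{r}$, $K= \cech{X\vee Y}{X'\vee Y'}{r}$ and $T = \{ \sigma \in K \setminus L \mid b\notin \sigma\}$, with the same gluing-metric computation showing that any witness of a mixed simplex $\sigma$ also witnesses $b$, followed by Corollary~\ref{cor:compactness}. The one place you overcomplicate matters is the ``main obstacle'' in your last paragraph: there is no need to shrink the witness sets at all, because Lemma~\ref{lem:39} only requires $K$ to be a finite simplicial complex, which depends only on the landmark (vertex) sets $X$ and $Y$ being finite; the paper's finite case therefore keeps $X'$ and $Y'$ arbitrary throughout, so that for a finite vertex subset $V_1 = X_0\vee Y_0$ one has $K[V_1]=\cech{X_0\vee Y_0}{X'\vee Y'}{r}$ on the nose and Corollary~\ref{cor:compactness} applies directly, with no enlargement of finite witness sets required.
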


The proof proceeds similarly to the proof of Proposition~\ref{prop:rips_wedge}.

\begin{proof}
Let $d$ be the metric on $X' \vee Y'$, and let $b$ denote the common basepoint in $X\vee Y$ and $X'\vee Y'$.
We first consider the case where $X$ and $Y$ are finite.
We apply Lemma~\ref{lem:39} with $L = \cech{X}{X'}{r} \vee \cech{Y}{Y'}{r}$, with $K= \cech{X\vee Y}{X'\vee Y'}{r}$, and with $T = \{ \sigma \in K \setminus L~|~b\notin \sigma\}$.
Suppose $\sigma \in T$, and let $z \displaystyle \in \cap_{v \in \sigma}B(v, r)$.
Since at least one vertex of each of $X \setminus \{b_X\}$ and $Y \setminus \{b_Y\}$ --- say $x$ and $y$, respectively --- are in $\sigma$, it follows that $d(b,z) \leq \max\{d(x,z), d(y,z) \} \leq r.$
Thus, $b \cup \sigma$ is a simplex of $K$.
Since $K = L \cup \bigcup_{\sigma \in T}\{\sigma, b \cup \sigma\}$, Lemma~\ref{lem:39} implies $L \simeq K$.

We now consider the case where $X$ and $Y$ are arbitrary.
Note that for any finite subsets $X_0\subseteq X$ and $Y_0\subseteq Y$ with $b_X\in X_0$ and $b_Y\in Y_0$, the finite case guarantees that $\cech{X_0}{X'}{r}\vee\cech{Y_0}{Y'}{r}\simeq\cech{X_0\vee Y_0}{X'\vee Y'}{r}$.
Hence, we can apply Corollary~\ref{cor:compactness} with $L=\cech{X}{X'}{r}\vee\cech{Y}{Y'}{r}$ and $K=\cech{X\vee Y}{X'\vee Y}{r}$.
\end{proof}

\begin{corollary}\label{cor:cech_wedge}
For pointed metric spaces $X \subseteq X'$ and $Y \subseteq Y'$ as well as for any homological dimension $i\ge0$ and field $k$, the persistence modules $\PH_i(\cech{X}{X'}{r} \vee \cech{Y}{Y'}{r} )$ and $\PH_i(\cech{X\vee Y}{X'\vee Y'}{r})$ are isomorphic.
\end{corollary}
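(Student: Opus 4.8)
The plan is to mirror exactly the structure used for Corollary~\ref{cor:rips_wedge}, replacing the Vietoris--Rips complex with the ambient \v{C}ech complex throughout. The key input is Proposition~\ref{prop:cech_wedge}, which supplies, for every scale $r$, a homotopy equivalence between $\cech{X}{X'}{r} \vee \cech{Y}{Y'}{r}$ and $\cech{X\vee Y}{X'\vee Y'}{r}$. What we need beyond the bare homotopy equivalence is that these equivalences are compatible with the filtration maps, i.e. that they fit into a commutative ladder as $r$ increases.

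First I would observe that for $r < r'$ all four relevant complexes sit inside one another via the obvious inclusions: increasing the scale only adds simplices, both for the ambient \v{C}ech complex of each piece and for the \v{C}ech complex of the wedge. Moreover the wedge-sum inclusion $\cech{\,\cdot\,}{\,\cdot\,}{r}\vee\cech{\,\cdot\,}{\,\cdot\,}{r}\hookrightarrow\cech{X\vee Y}{X'\vee Y'}{r}$ is literally a subcomplex inclusion on a common vertex set (indeed this is the map $L\hookrightarrow K$ from the proof of Proposition~\ref{prop:cech_wedge}), so it commutes with the scale inclusions on the nose. Hence for each $r<r'$ we get a commutative square of simplicial complexes and inclusion maps whose vertical arrows are the homotopy equivalences of Proposition~\ref{prop:cech_wedge}. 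Applying the functor $\PH_i(-;k)$ (equivalently, applying $H_i(-;k)$ at every scale and recording the filtration maps) turns this into a commutative square of homology groups in which the vertical maps are isomorphisms; since this holds for every pair $r<r'$, it assembles into an isomorphism of persistence modules $\PH_i(\cech{X}{X'}{r}\vee\cech{Y}{Y'}{r};k)\cong\PH_i(\cech{X\vee Y}{X'\vee Y'}{r};k)$.

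The one point requiring a touch of care — the ``main obstacle,'' though it is mild — is verifying that the homotopy equivalences produced by Proposition~\ref{prop:cech_wedge} can be taken to commute with the inclusions, rather than merely existing scale-by-scale. This is automatic here because the equivalence in Proposition~\ref{prop:cech_wedge} is realized by an inclusion of a subcomplex into a complex on the same vertex set (via Lemma~\ref{lem:39} in the finite case, and Corollary~\ref{cor:compactness} in general), and a subcomplex inclusion is visibly natural in $r$; so the vertical maps in the ladder are themselves just inclusions, and commutativity of the square is a triviality. I would state this explicitly to avoid any gap. With that noted, the corollary follows by the identical diagram-chase used for Corollary~\ref{cor:rips_wedge}, and one may append the same footnote caveat that isomorphic persistence modules have identical persistence diagrams whenever those are defined (e.g. by \cite[Proposition~5.1]{ChazalSilvaOudot2013} when the spaces are totally bounded). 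The proof is therefore essentially a one-line reference to the proof of Corollary~\ref{cor:rips_wedge} with Proposition~\ref{prop:cech_wedge} substituted for Proposition~\ref{prop:rips_wedge}.
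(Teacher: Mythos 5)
Your proof is correct and is exactly the argument the paper gives: it cites the proof of Corollary~\ref{cor:rips_wedge} verbatim with Proposition~\ref{prop:cech_wedge} substituted for Proposition~\ref{prop:rips_wedge}, relying on the fact that the homotopy equivalences are subcomplex inclusions and hence commute with the scale-parameter inclusions. Your extra remark making this naturality explicit is a sensible clarification but not a departure from the paper's route.
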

\begin{proof}
The proof is the same as that for Corollary~\ref{cor:rips_wedge}, except using Proposition~\ref{prop:cech_wedge} instead of Proposition~\ref{prop:rips_wedge}.
\end{proof}

\subsection{Vietoris--Rips complexes of set-wise gluings} \label{sec:VRsetgluings}

We now develop the machinery necessary to prove, in Theorem~\ref{thm:graph-gluing}, that the Vietoris--Rips complex of two metric graphs glued together along a sufficiently short path is homotopy equivalent to the union of the Vietoris--Rips complexes.
First, we prove a more general result for arbitrary metric spaces that intersect in a sufficiently small space.

\begin{theorem}
\label{thm:general-gluing}
Let $X$ and $Y$ be metric spaces with $X\cap Y=A$, where $A$ is a closed subspace of $X$ and $Y$,
and let $r>0$.
Consider $X \cup_A Y$, the metric gluing of $X$ and $Y$ along the intersection $A$.
Suppose that given any $\emptyset\neq S_X\subseteq X\setminus A$ and $\emptyset\neq S_Y\subseteq Y\setminus A$ where $\diam(S_X\cup S_Y)\le r$,
then there is a unique maximal nonempty subset $\sigma\subseteq A$ such that $\diam(S_X\cup S_Y\cup \sigma)\le r$.
Then $\vr{X\cup_A Y}{r}\simeq \vr{X}{r}\cup_{\vr{A}{r}}\vr{Y}{r}$.
In particular, if $\vr{A}{r}$ is contractible, then $\vr{X\cup_A Y}{r}\simeq \vr{X}{r}\vee\vr{Y}{r}$.
\end{theorem}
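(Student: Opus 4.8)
The plan is to reduce to the finite case and then apply Lemma~\ref{lem:gen39}. First I would handle arbitrary metric spaces via Corollary~\ref{cor:compactness}: given finite subsets $X_0\subseteq X$ and $Y_0\subseteq Y$, enlarge them to $X_1\supseteq X_0$, $Y_1\supseteq Y_0$ (both finite, and both containing a suitable finite portion of $A$) so that the finite-case conclusion holds for the gluing $X_1\cup_{A_1}Y_1$ where $A_1 = A\cap(X_1\cup Y_1)$; then the inclusion $\vr{X}{r}\cup_{\vr{A}{r}}\vr{Y}{r}\hookrightarrow\vr{X\cup_A Y}{r}$ is a homotopy equivalence by Corollary~\ref{cor:compactness}. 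The subtlety here is choosing $X_1, Y_1$ so that the unique-maximal-subset hypothesis is inherited, which requires adding to $X_1\cup Y_1$ the (finitely many relevant) points of $A$ that realize the maximal sets $\sigma$ for the finitely many pairs $(S_X, S_Y)$ with $S_X\subseteq X_0$, $S_Y\subseteq Y_0$; I would spell this out carefully since it is where the hypothesis is actually used.

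For the finite case, set $K=\vr{X\cup_A Y}{r}$ and $L=\vr{X}{r}\cup_{\vr{A}{r}}\vr{Y}{r}$. Note $L$ consists exactly of those simplices of $K$ that lie entirely in $X$ or entirely in $Y$, so the simplices of $K\setminus L$ are precisely those $\tau$ with $\tau\cap(X\setminus A)\neq\emptyset$ and $\tau\cap(Y\setminus A)\neq\emptyset$. For such a $\tau$, write $S_X=\tau\cap(X\setminus A)$, $S_Y=\tau\cap(Y\setminus A)$, $\tau_A=\tau\cap A$; by hypothesis there is a unique maximal nonempty $\sigma=\sigma(S_X,S_Y)\subseteq A$ with $\diam(S_X\cup S_Y\cup\sigma)\le r$, and necessarily $\tau_A\subseteq\sigma$. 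The idea is to let $T$ be the set of simplices $S=S_X\cup S_Y$ ranging over all "mixed cores" (i.e. over all pairs of nonempty $S_X\subseteq X\setminus A$, $S_Y\subseteq Y\setminus A$ with $\diam(S_X\cup S_Y)\le r$), and for each such $S$ use $\sigma = \sigma(S_X,S_Y)$ as the simplex in $L$ in the role required by Lemma~\ref{lem:gen39}. One checks: $S\notin L$ and $S$ is disjoint from $\sigma\subseteq A$; $\sigma\cup S$ is a simplex of $K$ by the defining property of $\sigma$; and the union $\bigcup_{S\in T}\{\tau\mid S\subseteq\tau\subseteq\sigma\cup S\}$ together with $L$ recovers all of $K$, because every $\tau\in K\setminus L$ has the form $S\cup\tau_A$ with $S = (\tau\cap(X\setminus A))\cup(\tau\cap(Y\setminus A))\in T$ and $S\subseteq\tau\subseteq\sigma(S)\cup S$.

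The main obstacle is verifying that $L$ really is a subcomplex of $K$ and that the decomposition $K = L\cup\bigcup_{S\in T}\{\tau\mid S\subseteq\tau\subseteq\sigma\cup S\}$ holds with no overlap problems — in particular, that the sets $\{\tau\mid S\subseteq\tau\subseteq\sigma(S)\cup S\}$ for distinct $S\in T$ do not force inconsistent choices. This is handled by the uniqueness clause of the hypothesis: the assignment $S\mapsto\sigma(S)$ is well-defined, and since $\sigma(S)$ is \emph{maximal}, any mixed simplex $\tau$ containing $S$ as its full mixed core automatically satisfies $\tau_A\subseteq\sigma(S)$, so $\tau$ is captured. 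Once Lemma~\ref{lem:gen39} applies we get $K\simeq L$ in the finite case; combined with the compactness argument above this proves $\vr{X\cup_A Y}{r}\simeq\vr{X}{r}\cup_{\vr{A}{r}}\vr{Y}{r}$. Finally, if $\vr{A}{r}$ is contractible, then gluing the two complexes along a contractible subcomplex is homotopy equivalent to their wedge sum (the inclusion of the contractible subcomplex is a cofibration, so collapsing it changes nothing up to homotopy), giving $\vr{X\cup_A Y}{r}\simeq\vr{X}{r}\vee\vr{Y}{r}$.
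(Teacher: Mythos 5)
Your overall strategy---reduce to the finite case via Corollary~\ref{cor:compactness}, then realize $\vr{X\cup_A Y}{r}$ from $L=\vr{X}{r}\cup_{\vr{A}{r}}\vr{Y}{r}$ by attaching the mixed simplices and collapsing them away---is the same as the paper's, and your treatment of the infinite case (enlarging the finite subset by a point of $A$ from each relevant maximal set $\sigma$) is fine, indeed slightly more explicit than the paper's. The gap is in the finite case: Lemma~\ref{lem:gen39} takes a \emph{single} simplex $\sigma\in L$ that must serve simultaneously for every $S\in T$, whereas you propose to feed it a simplex $\sigma(S)$ that varies with $S$. As stated the lemma simply does not apply to your $T$, and this is not cosmetic: the collapses have to be performed in a compatible order, and with an arbitrary assignment $S\mapsto\sigma(S)$ there is no guarantee that they can be.

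The repair is to partition $T$ according to the value of $\sigma$: for each nonempty $\sigma_i\subseteq A$ let $T_i$ be the set of cores whose maximal valid set is exactly $\sigma_i$, list the $\sigma_i$ in order of \emph{decreasing} cardinality, and apply Lemma~\ref{lem:gen39} once per $i$, producing a chain $L_0\subseteq L_1\subseteq\cdots$ ending at $\vr{X\cup_A Y}{r}$. For this to be legitimate one needs a monotonicity property that you never state: if $\emptyset\neq S'_X\subseteq S_X$ and $\emptyset\neq S'_Y\subseteq S_Y$, then $\diam\bigl(S'_X\cup S'_Y\cup\sigma(S_X,S_Y)\bigr)\le r$, hence $\sigma(S_X,S_Y)\subseteq\sigma(S'_X,S'_Y)$. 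This is what guarantees both that each intermediate $L_j$ is a simplicial complex (the faces of a newly attached simplex whose core is a proper sub-core were already added at an earlier stage, because their $\sigma$ is at least as large) and that the free-face condition $K_{i-1}\cap(\sigma * S_i)=\sigma * \dot{S_i}$ inside Lemma~\ref{lem:gen39} holds at each stage. Your appeal to ``the uniqueness clause'' explains why every mixed simplex is captured by the decomposition, but not why the resulting sequence of collapses is legal; the decreasing-cardinality ordering and the containment $\sigma(S)\subseteq\sigma(S')$ for sub-cores are the missing ingredients, and they are precisely the content of the paper's argument.
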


\begin{proof}
We first restrict our attention to the case when $X$ and $Y$ (and hence $A$) are finite.
Let $n=|A|$.
Order the nonempty subsets $\sigma_1, \sigma_2, \ldots, \sigma_{2^n-1}$ of $A$ so that for every $i,j$ with $i\le j$, we have $|\sigma_i|\ge|\sigma_j|$.
For $i=1,2,\ldots,2^n-1$, let $T_i$ be the set of all simplices of the form $S_X\cup S_Y$ such that:
\begin{itemize}
\item $\emptyset\neq S_X\subseteq X\setminus A$ and $\emptyset\neq S_Y\subseteq Y\setminus A$,
\item $\diam(S_X\cup S_Y)\le r$, and
\item $\sigma_i$ is the (unique) maximal nonempty subset of $A$ satisfying $\diam(S_X\cup S_Y\cup \sigma_i)\le r$.
\end{itemize}

Let $L_0=\vr{X}{r}\cup_{\vr{A}{r}}\vr{Y}{r}$.
We apply Lemma~\ref{lem:gen39} repeatedly to obtain
\begin{align*}
L_0\simeq & L_0 \cup \bigcup_{S \in T_1} \{\tau~|~S\subseteq\tau\subseteq\sigma_1\cup S\}=:L_1\\
\simeq & L_1 \cup\bigcup_{S \in T_2} \{\tau~|~S\subseteq\tau\subseteq\sigma_2\cup S\}=:L_2\\
&\vdots\\
\simeq & L_{2^n-3} \cup\bigcup_{S \in T_{2^n-2}} \{\tau~|~S\subseteq\tau\subseteq\sigma_{2^n-2}\cup S\}=:L_{2^n-2}\\
\simeq & L_{2^n-2} \cup\bigcup_{S \in T_{2^n-1}} \{\tau~|~S\subseteq\tau\subseteq\sigma_{2^n-1}\cup S\}=:L_{2^n-1}.
\end{align*}
The fact that each $L_j$ is a simplicial complex follows since if $S_X\cup S_Y\in T_j$ and $\emptyset \neq S'_X\subseteq S_X$ and $\emptyset \neq S'_Y\subseteq S_Y$, then we have $S'_X\cup S'_Y\in T_i$ for some $i\le j$ (meaning $\sigma_j\subseteq \sigma_i$).
For each $j=1, \ldots, 2^n-1$, we set $K=L_{j}$, $L = L_{j-1}$, $T = T_j$, and $\sigma = \sigma_j$ and apply Lemma~\ref{lem:gen39} to get that $L_{j} \simeq L_{j-1}$ (this works even when $T_j=\emptyset$, in which case $L_j=L_{j-1}$).

To complete the proof of the finite case it suffices to show $L_{2^n-1}=\vr{X\cup_A Y}{r}$.
This is because any simplex $\tau \in \vr{X\cup_A Y}{r} \setminus L_0$ is necessarily of the form $\tau = S_X\cup S_Y\cup \rho$, with $\emptyset\neq S_X \subseteq X\setminus A$, with $\emptyset\neq S_Y \subseteq Y \setminus A$, with $\rho\subseteq A$, and with $\diam(S_X\cup S_Y\cup \rho) \le r$.
(Note that $\rho$ could be the empty set.) 
By the assumptions of the theorem, there exists a unique maximal non-empty set $\sigma_j\subseteq A$ such that $\diam(S_X\cup S_Y\cup \sigma_j) \le r$, and hence $\rho \subseteq \sigma_j$.
Therefore $S_X\cup S_Y \in T_j$, and $\tau$ will be added to $L_j$ since $S_X\cup S_Y \subseteq \tau \subseteq S_X\cup S_Y\cup\sigma_j$.
Hence $\tau \in L_{2^n-1}$ and so $L_{2^n-1} = \vr{X\cup_A Y}{r}$.

Now let $X$ and $Y$ be arbitrary metric spaces.
Note that for any finite subsets $X_0\subseteq X$ and $Y_0\subseteq Y$ with $A_0=X_0\cap Y_0\neq \emptyset$, we have $\vr{X_0}{r}\cup_{\vr{A_0}{r}}\vr{Y_0}{r}\simeq\vr{X_0\cup_{A_0} Y_0}{r}$ by the finite case.
Hence we can apply Corollary~\ref{cor:compactness} with $L=\vr{X}{r}\cup_{\vr{A}{r}}\vr{Y}{r}$ and $K=\vr{X\cup_A Y}{r}$ to complete the proof.
\end{proof}

\begin{remark}\label{rem:general-gluing}
We remark that Theorem~\ref{thm:general-gluing} remains true under the setting in which $X$ and $Y$ are not metric spaces.
Indeed, suppose that $X$ and $Y$ are equipped with arbitrary nonnegative symmetric functions $d_X\colon X\times X\to\mathbb{R}$ and $d_Y\colon Y\times Y\to \mathbb{R}$ with $d_X(x,x) = 0$ for all $x \in X$.
That is, perhaps $d_X$ does not satisfy the triangle inequality, or possibly $d_X(x_1,x_2)=0$ for some distinct values $x_1 \neq x_2$, and similarly for $d_Y$.\footnote{One might also call this a pseudosemimetric.}
If $A$ is a subset of $X$ and $Y$ (no ``closure" assumption is needed, as $X\cup_A Y$ need not be a metric space here), then we can use the same formula as in Section~\ref{sec-background} to define the nonnegative symmetric function $d_{X\cup_A Y}\colon (X\cup_A Y)\times (X\cup_A Y)\to\mathbb{R}$.
We can define Vietoris--Rips complexes and diameters as before.
The proof of Theorem~\ref{thm:general-gluing} still goes through unchanged.
\end{remark}

Next, we provide a generalization of Theorem~\ref{thm:general-gluing}.
While this generalized result is not currently used to produce further results in gluing metric graphs, we believe it may be useful to extend Theorem~\ref{thm:graph-gluing} to glue metric graphs beyond a single path.
In order to state the generalization, we need to define the concept of a \emph{\maxvalidset{}}.
Fix a parameter $r > 0$.
Then, given $\emptyset\neq S_X\subseteq X\setminus A$ and $\emptyset\neq S_Y\subseteq Y\setminus A$, the \emph{\maxvalidset{}} with respect to $S_X$ and $S_Y$ is the collection of all nonempty sets $\sigma_i\subseteq A$ satisfying $\diam(S_X\cup S_Y\cup \sigma_i)\le r$.
We denote it by $\Sigma_{S_X, S_Y} := \{\sigma_1, \ldots, \sigma_m\}$.
This implies that $\diam(\sigma_i) \le r$, and hence, $\sigma_i$ is a simplex in $\vr{A}{r}$.
Moreover, if $\sigma \in \Sigma_{S_X, S_Y}$ and $\tau \subseteq \sigma$ then $\tau \in \Sigma_{S_X, S_Y}$, so $\Sigma_{S_X, S_Y}$ is a simplicial complex.
 
\begin{theorem}
\label{thm:general-gluing-edited}
Let $X$ and $Y$ be metric spaces with $X\cap Y=A$, where $A$ is a closed subspace of $X$ and $Y$,
and let $r>0$.
Consider $X \cup_A Y$, the metric gluing of $X$ and $Y$ along the intersection $A$.
Suppose that given any $\emptyset\neq S_X\subseteq X\setminus A$ and $\emptyset\neq S_Y\subseteq Y\setminus A$ where $\diam(S_X\cup S_Y)\le r$, its \maxvalidset{} $\Sigma_{S_X, S_Y}$ is a (non-empty) collapsible simplicial complex.
Then, $\vr{X\cup_A Y}{r}\simeq \vr{X}{r}\cup_{\vr{A}{r}}\vr{Y}{r}$.
In particular, if $\vr{A}{r}$ is contractible, then $\vr{X\cup_A Y}{r}\simeq \vr{X}{r}\vee\vr{Y}{r}$.
\end{theorem}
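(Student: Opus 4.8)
The plan is to mirror the proof of Theorem~\ref{thm:general-gluing}, replacing the repeated application of Lemma~\ref{lem:gen39} with Lemma~\ref{lem:twoplusplus}, which is precisely the generalization from collapsing over a single simplex $\sigma$ to collapsing over a collapsible subcomplex $\Sigma$. First I would reduce to the case where $X$ and $Y$ (and hence $A$) are finite; the passage to arbitrary metric spaces at the end is handled verbatim as in Theorem~\ref{thm:general-gluing}, by invoking Corollary~\ref{cor:compactness} with $L=\vr{X}{r}\cup_{\vr{A}{r}}\vr{Y}{r}$ and $K=\vr{X\cup_A Y}{r}$, since the finite case gives the needed homotopy equivalence on every pair of finite subcomplexes.

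For the finite case, the idea is to build up $\vr{X\cup_A Y}{r}$ from $L_0 = \vr{X}{r}\cup_{\vr{A}{r}}\vr{Y}{r}$ by adding, stage by stage, the simplices that straddle $X$ and $Y$. A simplex $\tau\in\vr{X\cup_A Y}{r}\setminus L_0$ has the form $\tau = S_X\cup S_Y\cup\rho$ with $\emptyset\neq S_X\subseteq X\setminus A$, $\emptyset\neq S_Y\subseteq Y\setminus A$, $\rho\subseteq A$, and $\diam(\tau)\le r$; in particular $\rho\in\Sigma_{S_X,S_Y}$. I would group these simplices by the ``bridge pair'' $(S_X, S_Y)$. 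Unlike the single-simplex case, however, I cannot simply order by $|\sigma_i|$; instead I need to order the bridge pairs $(S_X^{(1)}, S_Y^{(1)}), (S_X^{(2)}, S_Y^{(2)}), \ldots$ so that for $i\le j$ we have $|S_X^{(i)}\cup S_Y^{(i)}|\le |S_X^{(j)}\cup S_Y^{(j)}|$, and define $L_j = L_{j-1}\cup\bigcup_{\sigma\in\Sigma_j}\{\tau\mid S^{(j)}\subseteq\tau\subseteq\sigma\cup S^{(j)}\}$ where $S^{(j)}=S_X^{(j)}\cup S_Y^{(j)}$ and $\Sigma_j=\Sigma_{S_X^{(j)},S_Y^{(j)}}$. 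One must check at each stage that $L_{j-1}$ is genuinely a simplicial complex and that the hypotheses of Lemma~\ref{lem:twoplusplus} hold with $L=L_{j-1}$, $K=L_j$, $T=\{S^{(j)}\}$, and $\Sigma=\Sigma_j$: namely that $\Sigma_j$ is collapsible (given), that $S^{(j)}$ is disjoint from every $\sigma\in\Sigma_j$ (true since $\Sigma_j\subseteq A$ while $S^{(j)}\cap A=\emptyset$), that $\sigma\cup S^{(j)}$ is a simplex of $L_j$ for every such $\sigma$ (true by definition of $\Sigma_j$ and diameter), and crucially that $\Sigma_j*\dot S^{(j)}$, together with the lower boundary faces, already lies in $L_{j-1}$. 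The last point is the analogue of the observation in the proof of Theorem~\ref{thm:general-gluing}: any proper face of $S^{(j)}$ of the form $S'_X\cup S'_Y$ (with both parts nonempty) is a smaller bridge pair that was processed earlier, and joining it with any $\sigma\le$ some element of its own \maxvalidset{} — which contains $\Sigma_j$ when restricted appropriately — already put the relevant simplices into an earlier $L_i$; faces of $S^{(j)}$ that lie entirely in $X$ or entirely in $Y$ (including those joined with subsets of $A$) already sit in $L_0$. After processing all bridge pairs we obtain $L_N = \vr{X\cup_A Y}{r}$ since every straddling simplex has been added, and $L_0\simeq L_1\simeq\cdots\simeq L_N$.

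The main obstacle I anticipate is the bookkeeping needed to verify the intersection condition $L_{j-1}\cap(\Sigma_j * S^{(j)}) = \Sigma_j * \dot S^{(j)}$ required by Lemma~\ref{lem:twoplusplus}, i.e., that nothing of the form $\sigma\cup S^{(j)}$ with $\sigma\in\Sigma_j$ nonempty has already been introduced into $L_{j-1}$. This needs the ordering to be chosen carefully: since $\Sigma_j$ depends only on the bridge pair and not on any subset of $A$, and since every simplex already in $L_{j-1}\setminus L_0$ was introduced with some strictly smaller bridge pair as its $X,Y$-support, a simplex $\sigma\cup S^{(j)}$ with $\sigma\neq\emptyset$ has $X$-support exactly $S_X^{(j)}$ and $Y$-support exactly $S_Y^{(j)}$, hence could not have been added before stage $j$; and it is not in $L_0$ because it straddles. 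One should also confirm the degenerate cases ($\Sigma_j$ could be a single vertex, or $T_j$ empty) are covered, exactly as in Theorem~\ref{thm:general-gluing}. Finally, the ``in particular'' clause is immediate: if $\vr{A}{r}$ is contractible then $\vr{X}{r}\cup_{\vr{A}{r}}\vr{Y}{r}$ is homotopy equivalent to $\vr{X}{r}\vee\vr{Y}{r}$ by collapsing the contractible common subcomplex, so the conclusion follows from the first part.
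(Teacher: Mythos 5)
Your proposal is correct and follows essentially the same route as the paper: reduce to the finite case, build $\vr{X\cup_A Y}{r}$ up from $\vr{X}{r}\cup_{\vr{A}{r}}\vr{Y}{r}$ by iterated applications of Lemma~\ref{lem:twoplusplus} using the monotonicity $\Sigma_{S_X,S_Y}\subseteq\Sigma_{S'_X,S'_Y}$ for subpairs, and pass to infinite spaces via Corollary~\ref{cor:compactness}. The only difference is bookkeeping: the paper batches the bridge simplices by their common \maxvalidset{}, ordered by reverse containment in $\mathcal{S}(A)$, and applies the lemma once per batch, whereas you process one bridge pair at a time ordered by cardinality with $T$ a singleton; both orderings make the required intersection condition hold for the same reason.
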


\begin{proof}
We first restrict our attention to the case when $X$ and $Y$ (and hence $A$) are finite.
Let $\mathcal{S}(A)$ be the set of all simplicial complexes on the base set $A$
with the containment partial order.
We can use this  partial order to construct a total order on the set of simplicial complexes on $A$ by choosing a linear extension of the containment partial order.
Specifically, let $\Sigma_1, \Sigma_2, \ldots, \Sigma_N \in \mathcal{S}(A)$, where $N$ is the total number of simplicial complexes in $\mathcal{S}(A)$,\footnote{See \url{http://oeis.org/A014466}.}
be such a total order listed in inverse order; that is, if $\Sigma_i \subseteq \Sigma_j$, then $i \ge j$.

Using this total order, we consider a sequence of subsets of $X \cup Y$ for which $\Sigma_i$ is the maximal valid set.
For $i=1,2,\ldots,N$, let $T_i$ be the set of all simplices of the form $S_X\cup S_Y$ satisfying the following conditions:
\begin{itemize}
\item $\emptyset\neq S_X\subseteq X\setminus A$ and $\emptyset\neq S_Y\subseteq Y\setminus A$,
\item $\diam(S_X\cup S_Y)\le r$, and
\item $\Sigma_{S_X, S_Y} = \Sigma_i \in \mathcal{S}(A)$.
\end{itemize}
Note that $T_i$ may be the empty set.
Let $L_0=\vr{X}{r}\cup_{\vr{A}{r}}\vr{Y}{r}$.
We apply Lemma~\ref{lem:twoplusplus} 
repeatedly to obtain
\begin{align*}
L_0\simeq & L_0 \cup \bigcup_{\sigma \in \Sigma_1} \bigcup_{S \in T_1} \{\tau~|~S\subseteq\tau\subseteq\sigma\cup S\} =: L_1 \\
\simeq & L_1 \cup \bigcup_{\sigma \in \Sigma_2}\bigcup_{S \in T_2} \{\tau~|~S\subseteq\tau\subseteq\sigma\cup S\}=:L_2\\
&\vdots\\
\simeq & L_{N-2} \cup \bigcup_{\sigma \in \Sigma_{N-1}}\bigcup_{S \in T_{N-1}} \{\tau~|~S\subseteq\tau\subseteq\sigma\cup S\}=:L_{N-1}\\
\simeq & L_{N-1} \cup \bigcup_{\sigma \in \Sigma_N} \bigcup_{S \in T_{N}} \{\tau~|~S\subseteq\tau\subseteq\sigma\cup S\}=:L_{N}.
\end{align*}
The fact that each $L_j$ is a simplicial complex follows from the following claim.
\begin{claim}\label{claim:maxsetrelation}
If $S_X\cup S_Y\in T_j$ and $\emptyset \neq S'_X\subseteq S_X$ and $\emptyset \neq S'_Y\subseteq S_Y$, then $\Sigma_{S_X, S_Y} \subseteq \Sigma_{S'_X, S'_Y}$.
\end{claim}
\begin{proof}
It is easy to see that for any $\sigma \in \Sigma_{S_X, S_Y}$, we have $\diam(S'_X \cup S'_Y \cup \sigma) \le r$.
Thus there is necessarily a maximal subset $\sigma' \in \Sigma_{S'_X, S'_Y}$ with $\sigma \subseteq \sigma'$.
The claim then follows.
\end{proof}

In fact, the above claim implies that if $S_X\cup S_Y\in T_j$ and $\emptyset \neq S'_X\subseteq S_X$ and $\emptyset \neq S'_Y\subseteq S_Y$, then we have $S'_X\cup S'_Y\in T_i$ for some $i\le j$ (as $\Sigma_{S'_X, S'_Y} \supseteq \Sigma_{S_X, S_Y}$).
Therefore, all faces of newly added simplices are necessarily already present in $T_j$.

For each $j=1, \ldots, N$, we set $K=L_{j}$, $L = L_{j-1}$, $T = T_j$, and $\Sigma = \Sigma_j$ and apply Lemma~\ref{lem:twoplusplus} to obtain $L_{j} \simeq L_{j-1}$.
In particular, if $T_j= \emptyset$, we have $L_j=L_{j-1}$ without applying Lemma~\ref{lem:twoplusplus}.
Otherwise if $T_j\neq \emptyset$, then by the condition in the theorem and the construction of $T_j$, it must be that the maximal valid set $\Sigma_j$ is collapsible.
To see that these choices satisfy the conditions for Lemma~\ref{lem:twoplusplus}, note that $T_j \cap L_{j-1} = \emptyset$.
Indeed, for each $S_X \cup S_Y \in T_j$, its \maxvalidset{} $\Sigma_{S_X, S_Y}$ is unique, and then by the above claim, it is added for the first time at iteration $j$.

To complete the proof of the finite case, it suffices to show that $L_{N}=\vr{X\cup_A Y}{r}$.
This is because any simplex $\tau \in \vr{X\cup_A Y}{r} \setminus L_0$ is necessarily of the form $\tau = S_X\cup S_Y\cup \rho$, with $\emptyset\neq S_X \subseteq X\setminus A$, $\emptyset\neq S_Y \subseteq Y \setminus A$, $\rho\subseteq A$, and $\diam(S_X\cup S_Y\cup \rho) \le r$.
(Note that $\rho$ could be the empty set.) 
Hence, there must exist some non-empty $\sigma \in \Sigma_{S_X, S_Y}$ with $\rho \subseteq \sigma$.
Assume $S_X\cup S_Y \in T_j$ (and thus $\Sigma_{S_X, S_Y} = \Sigma_j$).
Then we will add simplex $\tau$ to $L_j$.
Therefore, $\tau \in L_N$ and so $L_N = \vr{X\cup_A Y}{r}$.

Finally, let $X$, $Y$, and $A$ be arbitrary (possibly infinite) metric spaces.
Note that for any finite subsets $X_0\subseteq X$ and $Y_0\subseteq Y$ with $A_0=X_0\cap Y_0\neq \emptyset$, we have $\vr{X_0}{r}\cup_{\vr{A_0}{r}}\vr{Y_0}{r}\simeq\vr{X_0\cup_{A_0} Y_0}{r}$ by the finite case.
Hence, we can apply Corollary~\ref{cor:compactness} with $L=\vr{X}{r}\cup_{\vr{A}{r}}\vr{Y}{r}$ and $K=\vr{X\cup_A Y}{r}$ to complete the proof of Theorem~\ref{thm:general-gluing-edited}.
\end{proof}

We remark that the condition in Theorems \ref{thm:general-gluing} and \ref{thm:general-gluing-edited} that the maximal set $\sigma$ or the maximal valid set $\Sigma_{S_X \cup S_Y}$ be nonempty is very important.
We thank Wojciech Chach\'{o}lski, Alvin Jin, Martina Scolamiero, and Francesca Tombari for pointing out a counterexample to~\cite[Corollary~9]{AdamaszekAdamsGasparovic2018}, and we would like to mention their forthcoming work on homotopical decompositions of Vietoris--Rips complexes~\cite{Chacholski}.
We describe their counterexample in Appendix~\ref{appendix:counterexample}.
The mistaken proof misses this crucial point of verifying that $\sigma$ is nonempty.
In the conference version of this paper,~\cite[Theorem 10]{AdamaszekAdamsGasparovic2018} relied on the incorrect corollary to prove the $r > \alpha$ case.
In this version, we remove that reliance 
and also prove a stronger version of~\cite[Theorem 10]{AdamaszekAdamsGasparovic2018} in Theorem~\ref{thm:graph-gluing}.

\subsection{Vietoris--Rips complexes of gluings of metric graphs}\label{sec:vr-graph}

We next study gluings of metric graphs.
The setup of the following theorem regarding metric graph gluings is illustrated in Figure~\ref{fig:gluing}.
In a graph, the \emph{degree} of a vertex without self-loops is the number of incident edges to that vertex.
A \emph{path graph} (or simply a path) is one with $n$ ordered vertices and with $n-1$ edges connecting pairs of successive vertices.
Any path graph can be parameterized by a closed interval. We call the points with the smallest and the largest values in this parametrization the \emph{endpoints} of the path.
Examples of how this theorem can be used in characterizing the homotopy types of Vietoris--Rips complexes of certain families of metric graphs will be given in Section~\ref{sec:types}.

\begin{figure}[t!]
\begin{center}
\includegraphics[width=0.98\linewidth]{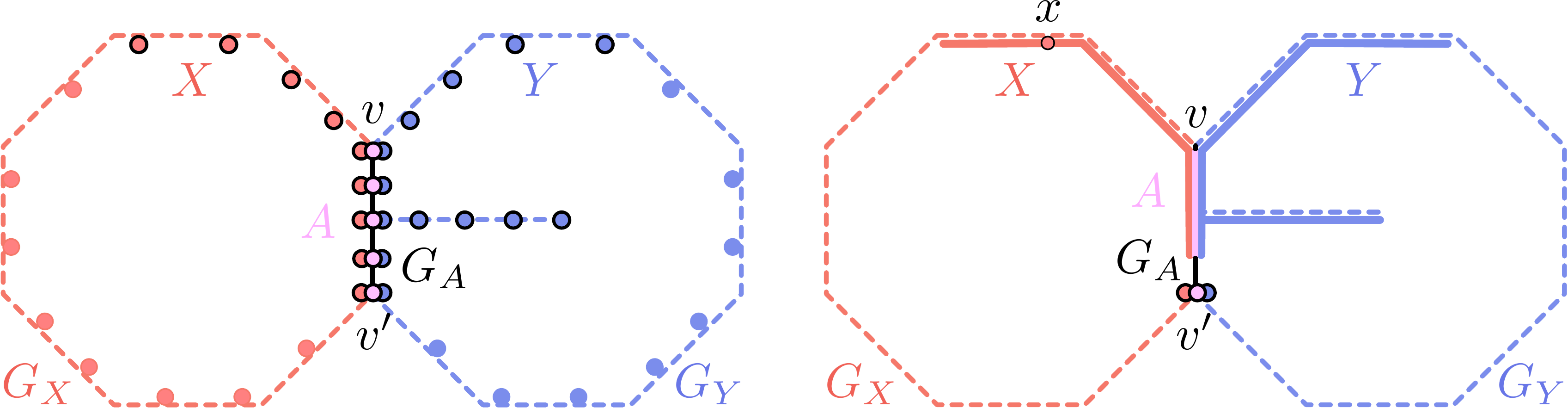}
\caption{Illustration of Theorem~\ref{thm:graph-gluing} on metric graph gluings and both finite (left) and infinite (right) subsets thereof.
The metric graphs $G_X$ and $G_Y$ are shown with thin, dotted red and blue lines respectively; $X$ and $Y$ are shown in the infinite case with thick, solid red and blue lines respectively;
$G_A$ corresponds to the black solid line while $A$ corresponds to the pink  solid line.
The finite subset case uses the same color scheme.
}
\label{fig:gluing}
\end{center}
\end{figure}

\begin{theorem}
\label{thm:graph-gluing}
Let $G=G_X\cup_{G_A} G_Y$ be a metric graph, where $G_A=G_X\cap G_Y$ is a closed metric subgraph of the metric graphs $G_X$ and $G_Y$.
Suppose furthermore that $G_A$ is a path of length $\alpha$, and that every vertex of $G_A$ besides the two endpoints always has degree 2 as a vertex restricted to $G_X$.
Let $\ell_X$ denote the length of the shortest cycle in $G_X$ passing through $G_A$,
and similarly, let $\ell_Y$ denote the length of the shortest cycle intersecting $G_A$ in $G_Y$.
Let $\ell = \min\{\ell_X, \ell_Y\}$.
Suppose $\alpha < \frac{\ell}{3}$.
Let $X\subseteq G_X$ and $Y\subseteq G_Y$ be arbitrary subsets such that $X\cap G_Y = Y\cap G_X =X\cap Y=:A$, where $A$ contains the two endpoints of $G_A$.
Then $\vr{X\cup_A Y}{r}\simeq \vr{X}{r}\cup_{\vr{A}{r}}\vr{Y}{r}$ for all $r>0$.
In particular, if $\vr{A}{r}$ is contractible, then $\vr{X\cup_A Y}{r}\simeq \vr{X}{r}\vee\vr{Y}{r}$.
\end{theorem}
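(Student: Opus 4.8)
The plan is to deduce Theorem~\ref{thm:graph-gluing} from Theorem~\ref{thm:general-gluing} applied to the metric gluing $X\cup_A Y$ sitting inside $G=G_X\cup_{G_A}G_Y$. By Corollary~\ref{cor:compactness} it suffices to treat finite $X$ and $Y$ (for which $A=X\cap Y$ is finite, hence closed in $X$ and $Y$), and then one only has to verify the hypothesis of Theorem~\ref{thm:general-gluing}: for every nonempty $S_X\subseteq X\setminus A$ and $S_Y\subseteq Y\setminus A$ with $\diam(S_X\cup S_Y)\le r$, there is a unique maximal nonempty $\sigma\subseteq A$ with $\diam(S_X\cup S_Y\cup\sigma)\le r$. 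Writing $W=\{a\in A:\ d_G(a,s)\le r\text{ for all }s\in S_X\cup S_Y\}$, it is enough to show that $W\neq\emptyset$ and $\diam(W)\le r$; then $\sigma=W$ is visibly the unique maximal valid set, and the ``in particular'' clause follows because a contractible $\vr{A}{r}$ turns the gluing $\vr{X}{r}\cup_{\vr{A}{r}}\vr{Y}{r}$ into a wedge sum. Note that since $X\cap G_Y=Y\cap G_X=A$ we have $S_X\subseteq G_X\setminus G_A$ and $S_Y\subseteq G_Y\setminus G_A$, i.e. these sets lie strictly off the path $G_A$.

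First I set up coordinates: identify $G_A$ isometrically with $[0,\alpha]$, with the endpoints $a_0,a_\alpha\in A$ corresponding to $0$ and $\alpha$. Using $\alpha<\ell/3$ I record the basic metric facts. (a) For $a,a'\in G_A$, $d_G(a,a')$ equals the distance along $G_A$: a shortcut through $G_X$ or $G_Y$ would, together with the $G_A$-segment between $a$ and $a'$, contain a cycle meeting $G_A$ of length $<2\alpha<\ell$, impossible. (b) For $s\in G_X\setminus G_A$, every shortest path from $s$ to $G_A$ first meets $G_A$ at $a_0$ or $a_\alpha$ (an interior vertex of $G_A$ has degree $2$ in $G_X$, so it cannot be reached from off $G_A$), and for $a$ at parameter $t$ one gets $d_G(s,a)=\min\{d_G(s,a_0)+t,\ d_G(s,a_\alpha)+(\alpha-t)\}$, using $\alpha<\ell_X/3$ to preclude shortcuts along $G_A$; in particular $d_G(s,\cdot)|_{G_A}$ is unimodal, so $\{a\in G_A:\ d_G(s,a)\le r\}$ is either all of $G_A$ or a union of one or two end-intervals. (c) Consequently, for $s\in G_X\setminus G_A$ and $s'\in G_Y\setminus G_A$, $d_G(s,s')=\min\{d_G(s,a_0)+d_G(s',a_0),\ d_G(s,a_\alpha)+d_G(s',a_\alpha)\}$, since a shortest $s$--$s'$ path must cross from the $G_X$-side into $G_Y$ through $a_0$ or $a_\alpha$. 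The hypothesis that $A$ contains $a_0$ and $a_\alpha$ is used precisely because these two endpoints are the only candidate attaching points and must be genuinely available in $W$.

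For the diameter bound, if $\alpha\le r$ then $\diam(W)\le\diam(G_A)=\alpha\le r$ and we are done. If $\alpha>r$, fix $s_0\in S_X$ (recall $S_X\neq\emptyset$). If $d_G(s_0,\cdot)|_{G_A}$ is monotone, its sublevel set $\{a\in G_A:\ d_G(s_0,a)\le r\}$ is a single end-interval of length $\le r$. Otherwise $d_G(s_0,\cdot)|_{G_A}$ has an interior maximum, which forces the shortest $s_0$--$a_0$ and $s_0$--$a_\alpha$ paths, together with $G_A$, to close up to a genuine cycle through $G_A$ of length $d_G(s_0,a_0)+d_G(s_0,a_\alpha)+\alpha\ge\ell_X>3\alpha$; hence $d_G(s_0,a_0)+d_G(s_0,a_\alpha)>2\alpha>2r$, so one of these two distances exceeds $r$ and $\{a\in G_A:\ d_G(s_0,a)\le r\}$ is again a single end-interval of length $\le r$. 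In all cases $W$ is contained in a subinterval of $G_A$ of diameter $\le r$, so $\diam(W)\le r$.

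For nonemptiness I show $a_0\in W$ or $a_\alpha\in W$. By (b)/(c), every $s\in S_X\cup S_Y$ satisfies $d_G(s,a_0)\le r$ or $d_G(s,a_\alpha)\le r$, and for each pair $s_X\in S_X$, $s_Y\in S_Y$ the minimizing endpoint in (c) gives a common $\beta\in\{0,\alpha\}$ with $d_G(s_X,a_\beta)\le r$ and $d_G(s_Y,a_\beta)\le r$; moreover, if some $s\in S_X$ has $d_G(s,a_0)>r$, then (c) applied to $(s,s_Y)$ forces $d_G(s_Y,a_\alpha)\le r$ for \emph{every} $s_Y\in S_Y$, and symmetrically with the roles of $S_X,S_Y$ and of $a_0,a_\alpha$ interchanged. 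If neither endpoint lay in $W$, there would be $u,v\in S_X\cup S_Y$ with $d_G(u,a_0)>r$ and $d_G(v,a_\alpha)>r$. If $u$ and $v$ lie on opposite sides, the common-$\beta$ property for the pair $\{u,v\}$ is violated. If they lie on the same side, say both in $S_X$, the implications above force every $s_Y\in S_Y$ to be within $r$ of both $a_0$ and $a_\alpha$; combining this with $d_G(u,s_Y)\le r$, $d_G(v,s_Y)\le r$ via (c) and with the triangle-inequality bounds $d_G(u,a_\alpha),d_G(v,a_0)>r-\alpha$ gives $d_G(s_Y,a_0)+d_G(s_Y,a_\alpha)<2\alpha$, which contradicts the cycle lower bound $d_G(s_Y,a_0)+d_G(s_Y,a_\alpha)+\alpha\ge\ell_Y>3\alpha$; the case ``both in $S_Y$'' is symmetric. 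Hence $W\neq\emptyset$, which completes the verification of the hypothesis of Theorem~\ref{thm:general-gluing}.

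I expect the delicate point to be exactly this last cycle estimate on the $G_Y$ side. There the degree-$2$ hypothesis is unavailable, so a shortest path in $G_Y$ from $s_Y$ to an endpoint of $G_A$ may weave in and out of $G_A$; one must argue carefully that, using $\alpha<\ell_Y/3$ and the fact that in the relevant configurations $d_G(s_Y,a_0)$ and $d_G(s_Y,a_\alpha)$ are both smaller than $\alpha$, the union of these shortest paths with $G_A$ still contains a genuine simple cycle through $G_A$ of length $<\ell_Y$ (handling separately the degenerate possibility that $s_Y$ is attached to $G_A$ along an interior arc, where instead $\{a\in G_A:\ d_G(s_Y,a)\le r\}$ is a single short interval). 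Verifying that every shortest $s_0$--$a_0$ / $s_0$--$a_\alpha$ concatenation with $G_A$ used above is (or contains) a simple cycle through $G_A$ is the only genuinely fiddly part; everything else is bookkeeping with the tent functions of fact (b).
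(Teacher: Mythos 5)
Your overall strategy is the paper's: reduce to Theorem~\ref{thm:general-gluing} (with Corollary~\ref{cor:compactness} handling the infinite case), and verify the unique-maximal-$\sigma$ hypothesis by exploiting the degree-$2$ condition in $G_X$ and $\alpha<\ell/3$ to control how shortest paths cross the gluing path. Your facts (a)--(c), the tent-function description of $d_G(s,\cdot)|_{G_A}$ for $s$ on the $X$-side, and the resulting bound $\diam(W)\le r$ are sound, and in fact give a cleaner, unified treatment of uniqueness than the paper's separate $r<\alpha$ and $r\ge\alpha$ discussions. Your ``opposite sides'' and ``both in $S_Y$'' subcases of the nonemptiness argument correspond to the paper's Cases~1 and~2 and are correct: there the relevant cycle lives in $G_X$, where the degree-$2$ hypothesis forces the two shortest paths to $a_0$ and $a_\alpha$ to meet $G_A$ only at their endpoints, so they genuinely close up into a cycle through $G_A$.

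The gap is in the remaining subcase (both bad points $u,v\in S_X$), which is the paper's Case~3, and it is not merely ``fiddly'': the intermediate statement you are trying to prove there is false. You aim to show $a_0\in W$ or $a_\alpha\in W$, deriving a contradiction from $d(s_Y,a_0)+d(s_Y,a_\alpha)<2\alpha$ via the ``cycle lower bound'' $d(s_Y,a_0)+d(s_Y,a_\alpha)+\alpha\ge\ell_Y$. On the $Y$-side there is no degree-$2$ hypothesis, so the shortest paths from $s_Y$ to $a_0$ and to $a_\alpha$ may enter $G_A$ at a common interior point and then run along $G_A$; their union with $G_A$ then contains no cycle at all. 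For instance, take $s_Y$ a pendant vertex attached to the midpoint of $G_A$, so $G_Y$ is a tree, $\ell_Y=\infty$, and $d(s_Y,a_0)+d(s_Y,a_\alpha)$ is barely more than $\alpha$; with $G_X$ a single long cycle one can then place $u,v$ so that $d(u,a_0)>r$, $d(v,a_\alpha)>r$, yet $\diam(S_X\cup S_Y)\le r$. In that configuration neither endpoint lies in $W$, and the only candidate witnesses are interior points of $G_A$. This is precisely why the paper does not argue by contradiction in Case~3: it constructs a witness as the common ``entry point'' into $G_A$ of the shortest paths from a fixed $y\in S_Y$ to all of $S_X$, proves all such entry points witness both $S_X$ and $S_Y$, and only then concludes nonemptiness. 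Your argument needs that construction (and note that even then one must address why a witness can be found inside the given subset $A$, which is only assumed to contain the two endpoints of $G_A$); ``handling the degenerate possibility separately'' inside your endpoint-only framework cannot succeed, because the conclusion you are after in that subcase does not hold.
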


\begin{proof}
We first consider the case where $r<\alpha$.
Let $\emptyset\neq S_X \subseteq X\setminus A$ and $\emptyset\neq S_Y \subseteq Y\setminus A$ be such that $\diam(S_X \cup S_Y) \leq r$.
Let $v$ and $v'$ in $A$ denote the endpoints of $G_A$.
We claim that exactly one of $v$ or $v'$ (without loss of generality let it be $v$) satisfies the property that every path connecting a point $x\in S_X$ to $y\in S_Y$ of length at most $r$ passes through $v$.
First, fix $y\in S_Y$; we claim that every path of length at most $r$ connecting a point in $S_X$ to $y$ must pass through $v$.
Indeed, if not, then pick $x$ and $x'$ whose shortest paths to $y$ go through $v$ and $v'$, respectively.
By connecting these paths with the path from $x$ to $x'$ we find\footnote{The concatenation of these paths may not be injective, but it is not hard to modify the non-injective loop to obtain an (injective) cycle.} a cycle of length at most $3r<3\alpha$.
A portion of this cycle of length at most $\frac{3\alpha}{2}$ lies in either $X$ or $Y$, by adding a subpath from $G_A$ we obtain a cycle in either $X$ or $Y$ of length at most $\frac{5\alpha}{2}<3\alpha$, a contradiction.
In other words, all points in $S_x$ are within distance $r$ to $v$ (as $v$ is along the shortest path from a point $x\in S_x$ to $y$ and $d(x, y) \le r$). 
On the other hand, note that for any point $x\in S_x$, if it is within distance $r$ to $v$, then $d_X(x,v') > r$; as otherwise, there will be a non-trivial cycle in $X$ containing $G_A$ of length $2r + \alpha < 3\alpha$, which contradicts the condition that $\alpha < \frac{\ell}{3}$. 
This then implies that all points in $S_Y$ are within distance $r$ to $v$: Specifically, if there exists $y' \in S_Y$ such that $d(y', v) > r$, then $d(y', x) > r$ for any $x\in S_X$, which contradicts that $diam(S_X \cup S_Y) \le r$. 
Putting everything together, we thus have that every path connecting a point $x\in S_X$ to $y\in S_Y$ of length at most $r$ passes through $v$. 
It is now not hard to see that there is a unique maximal nonempty subset $\tau\subseteq A$ with $\diam(S_X\cup S_Y\cup \tau)\leq r$.
Indeed, $\tau$ is the intersection of $A$ with some subpath of $G_A$ containing $v$. 
Hence, Theorem~\ref{thm:general-gluing} implies that $\vr{X\cup_A Y}{r}\simeq \vr{X}{r}\cup_{\vr{A}{r}}\vr{Y}{r}$, as desired.

We next consider the case where $r \geq \alpha$ and will show that for every $\emptyset\neq S_X \subseteq X \setminus A$ and $\emptyset\neq S_Y \subseteq Y \setminus A$ with $\diam(S_X \cup S_Y) \leq r$, there exists a point $u \in A$ such that $\diam(S_X \cup S_Y \cup \{u\}) \leq r$.
This will ensure that there exists a nonempty $\sigma \subseteq A$ satisfying $\diam(S_X\cup S_Y\cup \sigma)\le r$.
The set of all $\sigma\subseteq A$ satisfying $\diam(S_X\cup S_Y\cup \sigma)\leq r$ is closed under unions since $\diam(A) \leq r$, and hence there will be a unique non-empty maximal $\sigma$.
The conclusion will again follow from Theorem~\ref{thm:general-gluing}.

For any set $Z \subseteq X \cup Y$, we say that $a \in A$ is a \emph{witness} for $Z$ if $\diam(Z \cup \{a\}) \leq r$.
If $Z = \{z\}$, then we say that the point $z$ is \emph{witnessed by} $a$.
Once again, let $v$ and $v'$ denote the endpoints of the path $G_A$. 
Given an $S_X$ and $S_Y$ as above, we first claim generally that for any $z \in S_X \cup S_Y$, the point $z$ is witnessed by either $v$ or $v'$.
Indeed, if $z \in S_X$ then for any $y \in S_Y$ the shortest path from $z$ to $y$, which has length at most $r$, must pass through either $v$ or $v'$ on its way into the gluing region $A$ because of the degree 2 restriction within $G_X$ for vertices in $G_A$.
Therefore the distance from $z$ to one of $v$ or $v'$ (or both) must be at most $r$.
A similar argument holds true if $z \in S_Y$.
If either $v$ or $v'$ is a witness for $S_X \cup S_Y$, then we are done.
So suppose neither $v$ nor $v'$ is a witness for $S_X \cup S_Y$.
We proceed with a case analysis.
In each case we will assume that some point in $S_X \cup S_Y$ is not witnessed by $v'$ and another point is not witnessed by $v$.
We will either derive a contradiction or find a new witness for $S_X \cup S_Y$.

\paragraph{Case 1:}
There is a $y \in S_Y$ not witnessed by $v'$ and an $x \in S_X$ not witnessed by $v$ (the reasoning for this case also holds when $v$ and $v'$ are swapped).
In this case we know that $d(y, v') > r$.
But because of our prior claim, $y$ must then be witnessed by $v$, so that $d(y, v) \leq r$.
Observe that
\[
 r < d(y,v') \leq d(y,v) + d(v,v') \le d(y,v) + \alpha.
\]
This implies $d(y,v) > r-\alpha$.
For all $z \in S_X$, since $d(y,z) \leq r$ via a path going through $v$, it must be that $d(v, z) < \alpha$.
So $S_X \subseteq B_X(v,\alpha)$, i.e., $S_X$ must be contained within an open ball in $X$ of radius $\alpha < r$ centered at $v$.
This is a contradiction to our assumption that there is an $x \in S_X$ not witnessed by $v$, and hence Case~1 is impossible.

\paragraph{Case 2:} 
There is a $y \in S_Y$ not witnessed by $v'$ and a $y' \in S_Y$ not witnessed by $v$.
By the same argument as above, we have $S_X \subseteq B_X(v,\alpha)$, and by swapping $y'$ for $y$ and $v$ for $v'$ in that argument, we have $S_X \subseteq B_X(v', \alpha)$.
Therefore, $S_X \subseteq B_X(v, \alpha) \cap B_X(v', \alpha)$.
Consider an $x \in S_X$, so $d(x, v) < \alpha$ and $d(x, v') < \alpha$.
Because all $a \in A$ with $a \neq v, v'$ have degree 2 in $X$, and $d(v, v')=\alpha$, the shortest path\footnote{This shortest path is unique since there are no cycles of length at most $2\alpha\le3\alpha< \ell$.} from $x$ to $v'$ cannot go through $A$, which implies that it cannot go through $v$. Similarly, the shortest path from $x$ to $v$ cannot go through $v'$.
Hence as pictured in Figure \ref{fig:case3}, let $\hat{x}$ be the point farthest from $x$ which is shared between the shortest path from $x$ to $v$ and the shortest path from $x$ to $v'$.
This creates a non-trivial cycle in $G_X$ of length at most $3\alpha < \ell$, which is a contradiction to the shortest cycle in $X$ having length greater than or equal to $\ell$.
Therefore, all of $S_Y$ must have a common witness.

\paragraph{Case 3:} 
There is an $x \in S_X$ not witnessed by $v'$ and an $x' \in S_X$ not witnessed by $v$.
By a similar argument as above, we have $S_Y \subseteq B_Y(v, \alpha) \cap B_Y(v', \alpha)$.
Now, since the degree of vertices in $A$ restricted to $G_Y$ may be greater than 2 we cannot use quite the same argument as in Case 2.
Instead we will begin by proving the following claim.

\begin{claim}
Suppose we are in the setting of Case~3.
For every $y \in S_Y$, there exists a unique entry point 
$u \in A$ such that for all $z \in S_X$, $d(z, y) = d(z, u) + d(u, y)$.
In other words, when traveling on any shortest path from a given $y$ to any point in $S_X$, one must enter $A$ at a unique point $u \in A$.
\end{claim}

\begin{proof}
We will prove this by contradiction.
Assume there is a $y \in S_Y$ and $z, z' \in S_X$ such that $u$ is the first point in $A$ on a shortest path from $y$ to $z$, and $u'$ is the first point in $A$ on a shortest path from $y$ to $z'$ (see Figure \ref{fig:case4} for an illustration).
Any shortest path from $y$ to $z$ must go through either $v$ or $v'$ in order to get out of the gluing path and into $X$.
Therefore, since both $d(y, v)$ and $d(y, v')$ are at most $\alpha$, and $u$ is on the path from $y$ to $v$, it must be that $d(y, u) \leq \alpha$.
A similar argument gives us that $d(y, u') \leq \alpha$.
These two distances plus $d(u, u') \leq d(v, v') \le \alpha$ gives us a nontrivial cycle in $G_Y$ of length at most $3\alpha < \ell$, which is again a contradiction.
Therefore for every $y \in Y$, there is a unique entry point into $A$ common amongst all shortest paths from $y$ to $S_X$.
\end{proof}
\begin{figure}
\centering
\begin{subfigure}[b]{0.45\textwidth}
    \centering
    \includegraphics[height=1.75in]{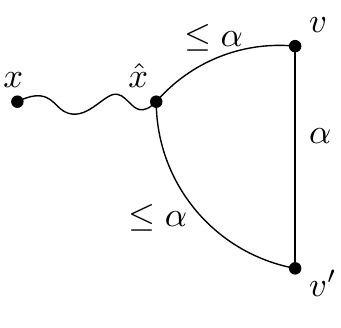}
    \caption{}
    \label{fig:case3}
\end{subfigure}
\qquad
\begin{subfigure}[b]{0.45\textwidth}
    \centering
    \includegraphics[height=1.75in]{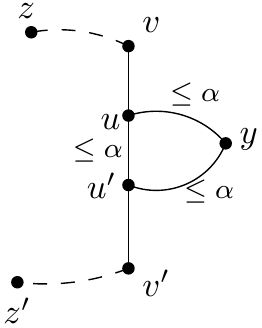}
    \caption{}
    \label{fig:case4}
\end{subfigure}
\caption{Illustrations of the contradictions in (a) Case 2 and (b) Case 3 in proof of Theorem \ref{thm:graph-gluing}.}
\end{figure}

Let $U \subseteq A$ denote the set of all entry points for all $y \in S_Y$.
Then, we claim that any $u \in U$ is a witness for $S_X$.
Indeed, each $u$ is along a shortest path from some $y$ to \emph{all} of $S_X$.
Therefore, for any $x \in S_X$, $d(x, u) \leq d(x, y) \leq r$.

Finally, we will show that any $u\in U$ is also a witness for $S_Y$.
Notice that any $u \in U \subseteq A$ divides the gluing path $G_A$ into two regions which we will call ``above'' $u$ (the shortest path between $u$ and $v$) and ``below'' $u$ (the shortest path between $u$ and $v'$).
Therefore, every other $u' \in U$ is either above $u$ or below $u$.
Recall as proven at the beginning of Case 3 that $S_Y \subseteq B_Y(v, \alpha) \cap B_Y(v', \alpha)$.
In order for the $y$ that matches a given $u$ to get to $v$, the shortest path will go through $u$ and then through all of the $u'$ above $u$.
Therefore, $d(y, u') \leq \alpha \leq r$ for all $u'$ above $u$.
Similarly, for $y$ to get to $v'$, the shortest path will go through $u$ and then through all $u''$ below $u$, giving $d(y, u'') \leq \alpha \leq r$ for all $u''$ below $u$.
This means that for any $y \in S_Y$ and any $u \in U$, $d(y, u) \leq r$ and so $u$ is a witness for $S_Y$.
By the paragraph above any $u\in U$ is a witness also for $S_X$, and hence for $S_X\cup S_Y$.
This completes the proof of Case~3.

This completes the proof for $r \geq \alpha$,
finishing the proof of Theorem~\ref{thm:graph-gluing}.
\end{proof}

\begin{corollary}\label{cor:graph-gluing}
Let $G$, $G_X$, $G_Y$, $G_A$, $X$, $Y$, and $A$ satisfy the same hypotheses as in the statement of Theorem~\ref{thm:graph-gluing}.
Suppose furthermore that $\vr{A}{r}$ is contractible for all $r>0$.
Then for any homological dimension $i\ge0$ and field $k$, the persistence modules $\PH_i(\vr{X}{r}\vee \vr{Y}{r};k)$ and $\PH_i(\vr{X\cup_A Y}{r}; k)$ are isomorphic.
\end{corollary}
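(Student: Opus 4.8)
The plan is to follow exactly the template of Corollary~\ref{cor:rips_wedge}, now invoking Theorem~\ref{thm:graph-gluing} in place of Proposition~\ref{prop:rips_wedge}. Since $\vr{A}{r}$ is contractible for every $r>0$, the last sentence of Theorem~\ref{thm:graph-gluing} gives a homotopy equivalence $\vr{X\cup_A Y}{r}\simeq \vr{X}{r}\vee \vr{Y}{r}$ at each scale $r$. First I would fix a field $k$ and a homological dimension $i\ge0$, and then for each pair $r<r'$ assemble the square of inclusion maps
\begin{center}
\begin{tikzcd}
\vr{X}{r}\vee \vr{Y}{r} \arrow[hookrightarrow]{r} \arrow[hookrightarrow,simeq]{d} & \vr{X}{r'}\vee \vr{Y}{r'} \arrow[hookrightarrow,simeq]{d} \\
\vr{X\cup_A Y}{r} \arrow[hookrightarrow]{r}& \vr{X\cup_A Y}{r'}
\end{tikzcd}
\end{center}
in which the vertical maps are the homotopy equivalences supplied by Theorem~\ref{thm:graph-gluing}. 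The one point requiring a word of care is that this square actually commutes on the nose: the vertical maps here are honest inclusions of simplicial complexes (the wedge $\vr{X}{r}\vee\vr{Y}{r}$ sits inside $\vr{X\cup_A Y}{r}$ as the subcomplex of simplices not meeting both $X\setminus A$ and $Y\setminus A$ nontrivially — this is precisely the subcomplex $L$ from the proof of Theorem~\ref{thm:general-gluing}), and all four arrows are inclusions, so commutativity is immediate. Theorem~\ref{thm:graph-gluing} tells us these inclusions are homotopy equivalences; that is all we need.

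Next I would apply the homology functor $H_i(-;k)$ to the square, obtaining a commuting square of $k$-vector spaces
\begin{center}
\begin{tikzcd}
H_i(\vr{X}{r}\vee \vr{Y}{r}) \arrow{r} \arrow[Isom]{d} & H_i(\vr{X}{r'}\vee \vr{Y}{r'}) \arrow[Isom]{d} \\
H_i(\vr{X\cup_A Y}{r}) \arrow{r} & H_i(\vr{X\cup_A Y}{r'})
\end{tikzcd}
\end{center}
with the vertical maps isomorphisms. Letting $r$ and $r'$ range over all pairs $0<r<r'$, these vertical isomorphisms are natural in the filtration parameter — because they are induced by the inclusions, which are compatible with all further inclusions — and hence constitute an isomorphism of persistence modules $\PH_i(\vr{X}{r}\vee \vr{Y}{r};k)\cong \PH_i(\vr{X\cup_A Y}{r};k)$ in the category of persistence modules (morphisms being commuting diagrams, as recalled in Section~\ref{sec-background}). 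This is the claimed conclusion.

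There is no serious obstacle here; the statement is a formal consequence of Theorem~\ref{thm:graph-gluing} in the same way Corollary~\ref{cor:rips_wedge} follows from Proposition~\ref{prop:rips_wedge}. The only mild subtlety worth making explicit — and the place a careless argument could go wrong — is the naturality claim: one must note that the homotopy equivalences at the different scales are compatible with the bonding maps of the filtration, which holds here precisely because they are all realized by the \emph{same} inclusion of the subcomplex $\vr{X}{r}\vee\vr{Y}{r}$ into $\vr{X\cup_A Y}{r}$, not by some scale-dependent choice of equivalence. Given that, the shortest honest write-up is simply: "The proof is identical to that of Corollary~\ref{cor:rips_wedge}, replacing Proposition~\ref{prop:rips_wedge} by Theorem~\ref{thm:graph-gluing} (using the hypothesis that $\vr{A}{r}$ is contractible for all $r>0$)."
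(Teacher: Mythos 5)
Your proposal is correct and follows exactly the route the paper takes: its proof of Corollary~\ref{cor:graph-gluing} is the one-line statement that the argument of Corollary~\ref{cor:rips_wedge} carries over verbatim with Theorem~\ref{thm:graph-gluing} in place of Proposition~\ref{prop:rips_wedge}. One small inaccuracy in your parenthetical: the subcomplex $L$ of $\vr{X\cup_A Y}{r}$ from the proof of Theorem~\ref{thm:general-gluing} is the gluing $\vr{X}{r}\cup_{\vr{A}{r}}\vr{Y}{r}$, not the wedge $\vr{X}{r}\vee\vr{Y}{r}$ (these coincide only when $A$ is a single point), so the left-hand vertical comparison is really the composite of the inclusion $L\hookrightarrow\vr{X\cup_A Y}{r}$ with the quotient collapsing the contractible subcomplex $\vr{A}{r}$ --- both natural in $r$, so the persistence-module isomorphism still follows.
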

\begin{proof}
The proof is the same as that for Corollary~\ref{cor:rips_wedge}, except using Theorem~\ref{thm:graph-gluing} instead of Proposition~\ref{prop:rips_wedge}.
\end{proof}

\subsection{\v{C}ech complexes of set-wise gluings}\label{sec:cech-gluings}

It seems natural to ask if our results in Sections~\ref{sec:VRsetgluings}--\ref{sec:vr-graph} extend to \v{C}ech complexes.
For example, is it necessarily the case that $\icech{X}{r} \cup_{\icech{A}{r}} \icech{Y}{r} \simeq \icech{X \cup_{A} Y}{r}$, where $X,Y$ and $A$ are as described in Theorem~\ref{thm:graph-gluing}?
Interestingly, while the desired result may hold true, the arguments in the proof of Theorem~\ref{thm:graph-gluing} do not all directly transfer to the \v{C}ech case.

Theorem~\ref{thm:general-gluing} can be extended to the \v{C}ech case, with an analogous proof, by replacing the condition $\diam(S_X \cup S_Y \cup \sigma) \le r$ with $\bigcap_{z\in S_X\cup S_Y\cup\sigma}B(z;r)\neq\emptyset$.
Note that if $S_X\cup S_Y\cup\sigma$ is finite, then $\bigcap_{z\in S_X\cup S_Y\cup\sigma}B(z;r)\neq\emptyset$ means that $S_X\cup S_Y\cup\sigma \in \icech{X \cup_{A} Y}{r}$.

\begin{theorem}[\v{C}ech-version of Theorem~\ref{thm:general-gluing}]\label{thm:general-cech-gluing}
Let $X$ and $Y$ be metric spaces with $X\cap Y=A$, where $A$ is a closed subspace of $X$ and $Y$,
and let $r>0$.
Suppose that if $\bigcap_{z\in S_X\cup S_Y}B(z;r)\neq\emptyset$ for some $S_X\subseteq X\setminus A$ and $S_Y\subseteq Y\setminus A$, then there is a unique maximal nonempty subset $\sigma\subseteq A$ such that $\bigcap_{z\in S_X\cup S_Y\cup\sigma}B(z;r)\neq\emptyset$.
Then $\icech{X\cup_A Y}{r}\simeq \icech{X}{r}\cup_{\icech{A}{r}}\icech{Y}{r}$.
Hence if $\icech{A}{r}$ is contractible, we have that $\icech{X\cup_A Y}{r}\simeq \icech{X}{r}\vee\icech{Y}{r}$.
\end{theorem}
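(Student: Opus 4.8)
The plan is to transcribe the proof of Theorem~\ref{thm:general-gluing}, replacing the diameter predicate ``$\diam(\cdot)\le r$'' everywhere by the nerve predicate ``$\bigcap_{z\in\cdot}B(z;r)\neq\emptyset$'', but with the auxiliary sets $T_i$ suitably enlarged. First I would treat the finite case: with $A$ finite, enumerate the nonempty subsets $\sigma_1,\dots,\sigma_{2^{|A|}-1}$ of $A$ by non-increasing cardinality, set $L_0=\icech{X}{r}\cup_{\icech{A}{r}}\icech{Y}{r}$, and for each $i$ let $T_i$ be an appropriate collection of simplices $S=S_X\cup S_Y$ with $S_X\subseteq X\setminus A$, $S_Y\subseteq Y\setminus A$, $\bigcap_{z\in S}B(z;r)\neq\emptyset$, whose unique maximal valid subset of $A$ (furnished by the hypothesis) is $\sigma_i$, and which are not yet in $L_{i-1}$. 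Then build $L_j=L_{j-1}\cup\bigcup_{S\in T_j}\{\tau\mid S\subseteq\tau\subseteq\sigma_j\cup S\}$ and apply Lemma~\ref{lem:gen39} at each stage --- with the trivial monotonicity ``$S'\subseteq S\Rightarrow\bigcap_{z\in S'}B(z;r)\supseteq\bigcap_{z\in S}B(z;r)$'' taking over the role played by monotonicity of the diameter --- to obtain $L_{j-1}\simeq L_j$ and hence $L_0\simeq L_{2^{|A|}-1}$. The last step of the finite case is the identification $L_{2^{|A|}-1}=\icech{X\cup_A Y}{r}$.

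The place where the argument genuinely departs from the Vietoris--Rips proof is precisely that identification. For diameters it is immediate, because $\diam$ restricted to $X$ is computed inside $X$: a simplex of $\vr{X\cup_A Y}{r}$ contained in $X$ already lies in $\vr{X}{r}\subseteq L_0$, so the only simplices outside $L_0$ are the genuinely mixed $S_X\cup S_Y\cup\rho$ with $S_X,S_Y$ both nonempty. For \v{C}ech complexes this is false: a simplex $\tau$ supported entirely in $X$ (or even in $A$) may lie in $\icech{X\cup_A Y}{r}$ without lying in $\icech{X}{r}$ (resp.\ $\icech{A}{r}$), since its only witness can be a point of $Y\setminus A$ --- this can fail already on a small finite metric space, with a pair $\{z_1,z_2\}\subseteq X\setminus A$ lying in $\icech{X\cup_A Y}{r}\setminus\icech{X}{r}$. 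This is exactly why the hypothesis here is phrased for all $S_X\subseteq X\setminus A$ and all $S_Y\subseteq Y\setminus A$, rather than only nonempty ones as in Theorem~\ref{thm:general-gluing}: the $T_i$ must absorb these ``one-sided'' simplices as well, and I expect the main obstacle to be the bookkeeping needed to make this work --- verifying that every simplex of $\icech{X\cup_A Y}{r}\setminus L_0$ is produced at exactly one stage, that the elements of each $T_j$ are indeed not yet in $L_{j-1}$, and that the hypotheses of Lemma~\ref{lem:gen39} are met at every stage. One may well want to route the collapse through Lemma~\ref{lem:twoplusplus}, collapsing the escaping simplices attached to a fixed maximal valid set as a join over a collapsible subcomplex, rather than through Lemma~\ref{lem:gen39} directly.

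Once the finite case is in hand, the passage to arbitrary (possibly infinite) $X$ and $Y$ is routine and identical to Theorem~\ref{thm:general-gluing}: for finite $X_0\subseteq X$ and $Y_0\subseteq Y$ with $A_0=X_0\cap Y_0\neq\emptyset$ the finite case gives $\icech{X_0}{r}\cup_{\icech{A_0}{r}}\icech{Y_0}{r}\simeq\icech{X_0\cup_{A_0}Y_0}{r}$, so Corollary~\ref{cor:compactness} applied with $L=\icech{X}{r}\cup_{\icech{A}{r}}\icech{Y}{r}$ and $K=\icech{X\cup_A Y}{r}$ yields the general homotopy equivalence. The addendum that a contractible $\icech{A}{r}$ forces $\icech{X\cup_A Y}{r}\simeq\icech{X}{r}\vee\icech{Y}{r}$ then follows, since gluing two complexes along a contractible common subcomplex recovers their wedge up to homotopy.
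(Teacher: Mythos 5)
Your overall plan is the one the paper itself gestures at (it records only that the proof of Theorem~\ref{thm:general-gluing} goes through ``with an analogous proof'' after replacing the diameter predicate by the common-ball predicate), and your observation that the genuine point of departure is the existence of ``one-sided'' simplices --- sets supported in $X$ whose only witnesses lie in $Y\setminus A$ --- is correct and is exactly the subtlety that the paper's one-line remark hides. The gap is that the step you defer as ``bookkeeping'' is precisely where the transcription breaks, and you have not supplied the idea needed to fix it. Your own example of an escaping pair $\{z_1,z_2\}\subseteq X\setminus A$ is the harmless case, since that simplex is itself outside $L_0=\icech{X}{r}\cup_{\icech{A}{r}}\icech{Y}{r}$ and can serve as a free face. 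The harmful case is an escaping simplex that meets $A$: take $A=\{a_1,a_2\}$ with $d(a_1,a_2)$ close to $2r$, points $x_i\in X\setminus A$ very close to $a_i$, and $w\in Y\setminus A$ with $d_Y(w,a_1),d_Y(w,a_2)$ slightly below $r$. Then $\{x_1,a_2\}$ lies in $\icech{X\cup_A Y}{r}$ (witnessed by $w$) but in neither $\icech{X}{r}$ nor $\icech{Y}{r}$, while its face $\{x_1\}$ lies in $L_0$. Every free face in your scheme has the form $S=S_X\cup S_Y$ with $S\cap A=\emptyset$, so the only interval $\{\tau\mid S\subseteq\tau\subseteq\sigma\cup S\}$ that could contain $\{x_1,a_2\}$ is the one over $S=\{x_1\}$; but $\{x_1\}\in L_0$ is excluded from every $T_j$, so $\{x_1,a_2\}$ is never added. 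Equivalently, at the stage where you would attach the interval over $S=\{x_1,x_2\}$, the condition $L_{j-1}\cap(\sigma * S)=\sigma * \dot{S}$ required by Lemma~\ref{lem:gen39} fails, because $\{x_1,a_2\}\in\sigma*\dot{S}$ but $\{x_1,a_2\}\notin L_{j-1}$.

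The root cause is that, unlike the diameter, membership of $S_X\cup\rho$ in $\icech{X}{r}$ is not monotone in any way controlled by $\rho\subseteq A$ alone (it depends on where a witness can sit), so the simplices of $\icech{X\cup_A Y}{r}\setminus L_0$ lying over a fixed $S_X$ do not form an interval $[S_X,\sigma\cup S_X]$, and neither Lemma~\ref{lem:gen39} nor Lemma~\ref{lem:twoplusplus} applies as stated. To complete the argument you must either (i) show that the theorem's hypothesis (with $S_X$ or $S_Y$ possibly empty) forces every simplex of $\icech{X\cup_A Y}{r}$ supported in $X$ to already lie in $\icech{X}{r}$ --- true when $A$ is a single point, which is why Proposition~\ref{prop:cech_wedge} goes through, but not true for general $A$ --- or (ii) design a collapse whose free faces are permitted to contain vertices of $A$ (e.g., the minimal faces of $\sigma * S_X$ not in $L_0$), together with a verification that each such face has a unique maximal coface. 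As written, the proposal names the obstacle but does not overcome it.
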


\begin{figure}[t!]
\begin{center}
\includegraphics[width=0.525\linewidth]{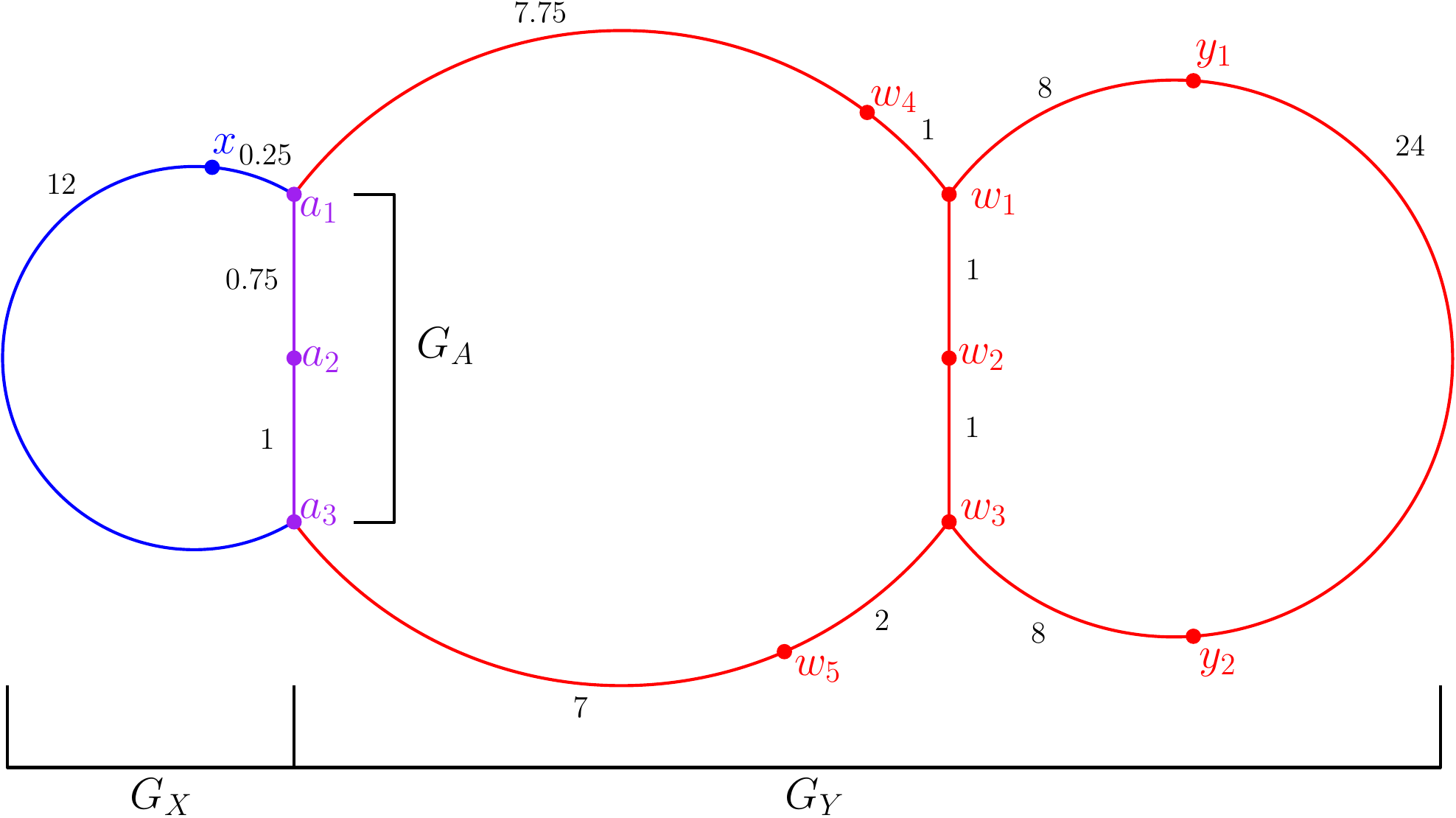}
\caption{Two graphs, $G_X$ (blue) and $G_Y$ (red), glued along $G_A$ (purple).
Distances between points are denoted in black.}
\label{fig:cechexample}
\end{center}
\end{figure}

It is hard to generalize Theorem~\ref{thm:graph-gluing} for the \v{C}ech case with the existing type of simplicial collapses.
We give an example to illustrate this point.
See Figure~\ref{fig:cechexample}, where the distance metric is induced by the lengths marked on the arcs of the graph.
In the notation of Theorem~\ref{thm:graph-gluing}, let $G_X$ be the loop on the left and $G_Y$ be the two loops on the right, where $G_A$ is the path from $a_1$ to $a_3$.
Let $X = \{x,a_1,a_2,a_3\}$ and $Y = \{y_1,y_2,a_1,a_2,a_3,w_1,w_2,w_3, w_4, w_5\}$, so $A = \{a_1,a_2,a_3\}$.
We assume the following distances, as labeled in the figure:  $d(x,a_1) = 0.25,$ $d(a_1,a_2) = 0.75,$ $d(a_2,a_3) = d(w_1,w_2) = d(w_2,w_3) = d(w_1, w_4) = 1$, $d(w_1,y_1) = d(w_3,y_2) = 8$, $d(w_3, w_5) = 2, d(a_1, w_4) = 7.75,$ and  $d(a_3, w_5) = 7$.
The distances between all other pairs of nodes are induced by these lengths.
The scale parameter $r$ for the \v{C}ech complex is $r = 10$.

To prove an analogous version of Theorem~\ref{thm:graph-gluing}, a key step would be to show that when the size of $A$ is small enough, then the following condition in Theorem~\ref{thm:general-cech-gluing} can be satisfied:

\begin{quote}
{\sf (Condition-R):} For any $S_X\subseteq X\setminus A$ and $S_Y\subseteq Y\setminus A$ such that\newline
$\bigcap_{z\in S_X\cup S_Y}B(z;r)\neq\emptyset$, there is a unique maximal nonempty subset $\sigma\subseteq A$ such that $\bigcap_{z\in S_X\cup S_Y\cup\sigma}B(z;r)\neq\emptyset$.
\end{quote}
However, in the example from Figure~\ref{fig:cechexample}, consider $S_X = \{x\}$ and $S_Y = \{y_1, y_2\}$.
It turns out that there are at least two maximal subsets, $\sigma_1 = \{a_1, a_3\}$ and $\sigma_2 = \{a_1, a_2\}$, that form a simplex with $S_X\cup S_Y$ in $\icech{X \cup_{A} Y}{r}$.
(Note that in the finite case, $\bigcap_{z\in S_X\cup S_Y\cup\sigma}B(z;r)\neq\emptyset$ means that $S_X\cup S_Y\cup \sigma$ forms a simplex in $\icech{X \cup_{A} Y}{r}$.)
Indeed, $\bigcap_{z\in S_X\cup S_Y\cup\sigma_1}B(z;r) = \{w_2\}$ and $\bigcap_{z\in S_X\cup S_Y\cup\sigma_2}B(z;r) = \{w_1\}$; however, $\bigcap_{z\in S_X\cup S_Y\cup\sigma_1 \cup \sigma_2}B(z;r) = \emptyset$.
This example can be modified to include all points from the underlying space of this metric graph, but for simplicity we consider the discrete case here.

While this example does not rule out a generalization of Theorem~\ref{thm:graph-gluing} to the \v{C}ech case, it suggests that such a generalization (if it holds) will require a different proof technique, perhaps using Theorem~\ref{thm:general-gluing-edited}.

%------------------------------------------------------------------------------------------------

\section{Applicability to certain families of graphs}
\label{sec:types}

The results in Section~\ref{sec:homotopy} provide a mechanism to compute the homotopy types and persistent homology of Vietoris--Rips complexes of metric spaces built from gluing together simpler ones.
For the sake of brevity, if the results of Section~\ref{sec:homotopy} can be used to completely describe the homotopy types and persistence module of the Vietoris--Rips complexes of metric space $X$, then we will simply say that space $X$ can be \emph{characterized}.
In Figure~\ref{fig:types}, the three metric graphs (a)-(c) can be characterized, whereas (d) cannot.
We first describe some families of metric spaces that can be characterized, and discuss obstructions to characterization, such as in the case of example (d).

\begin{figure}[!h]
\begin{center}
\includegraphics[width=0.9\textwidth]{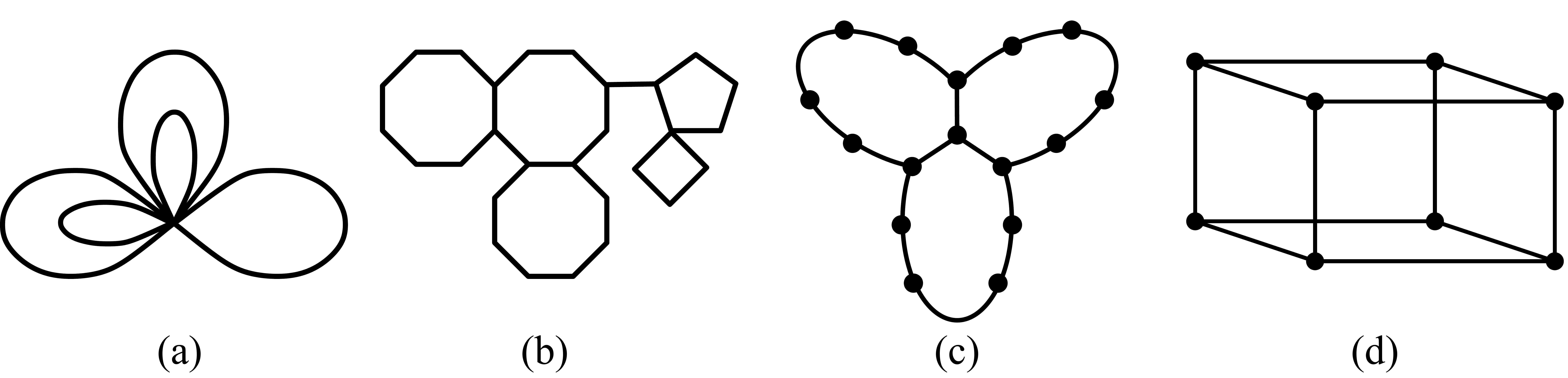}
\caption{Graphs (a), (b), and (c) can be characterized while (d) cannot.}
\label{fig:types}
\end{center}
\end{figure}

We consider finite metric spaces and metric graphs that can be understood using the results in this paper.
Examples of finite metric spaces whose Vietoris--Rips complexes are well-understood include the vertex sets of dismantlable graphs (defined below), and vertex sets of single cycles (whose Vietoris--Rips complexes are homotopy equivalent to a wedge of spheres~\cite{Adamaszek2013}).
Examples of metric graphs whose Vietoris--Rips complexes are well-understood include trees, and single cycles (whose Vietoris--Rips complexes are typically homotopy equivalent to a single odd-dimensional sphere~\cite{AdamaszekAdams2017}).

Let $G$ be a graph with vertex set $V$ and with all edges of length one.\footnote{We make this assumption for simplicity's sake, even though it can be relaxed.} The vertex set $V$ is a metric space equipped with the shortest path metric.
We say that a vertex $v\in V$ is \emph{dominated} by $u\in V$ if $v$ is connected to $u$, and if each neighbor of $v$ is also a neighbor of $u$.
We say that a graph is \emph{dismantlable} if we can iteratively remove dominated vertices from $G$ in order to obtain the graph with a single vertex.
Note that if $v$ is dominated by $u$, then $v$ is dominated by $u$ in the 1-skeleton of $\vr{V}{r}$ for all $r\ge 1$.
It follows from the theory of folds, elementary simplicial collapses, or LC reductions~\cite{BabsonKozlov2006,Bjorner1995,Matouvsek2008} that if $G$ is dismantlable, then $\vr{V}{r}$ is contractible for all $r\ge 1$.
Examples of dismantlable graphs include trees, chordal graphs, and unit disk graphs of sufficiently dense samplings of convex sets in the plane~\cite[Lemma~2.1]{Latschev2001}.
We will also need the notion of a \emph{$k$-cycle} graph, a simple cycle with $k$ vertices and $k$ edges. 

The following proposition specifies a family of finite metric spaces that can be characterized using the results in this paper. 

\begin{proposition}\label{prop:iter_gluing_vertex}
Let $G$ be a finite graph, with each edge of length one, that can be obtained from a vertex by iteratively attaching (i) a dismantlable graph or (ii) a $k$-cycle graph along a vertex or along a single edge.
Let $V$ be the vertex set of the graph $G$.
Then we have $\vr{V}{r} \simeq \bigvee_{i=1}^n \vr{V(C_{k_i})}{r}$ for $r\ge 1$, where $n$ is the number of times operation (ii) is performed, $k_i$ are the corresponding cycle lengths, and $V(C_{k_i})$ is the vertex set of a $k_i$-cycle.
\end{proposition}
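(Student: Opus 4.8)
The plan is to induct on the number of gluing operations used to build $G$, using the wedge-sum and graph-gluing theorems from Section~\ref{sec:homotopy} at each step. The base case is a single vertex, whose Vietoris--Rips complex is a point, matching the empty wedge. For the inductive step, suppose $G$ is obtained from a graph $G'$ (already built by fewer operations) by attaching a piece $H$ along either a single vertex or a single edge, where $H$ is dismantlable or a $k$-cycle. Write $V = V(G)$, $V' = V(G')$, $W = V(H)$, and let $A = V' \cap W$ be the glued vertex set, so $|A| = 1$ or $|A| = 2$. The key point is that $V$, as a metric space with the shortest-path metric, is the metric gluing $V' \cup_A W$ in the sense of Section~\ref{sec-background}: since every path in $G$ between a point of $G'$ and a point of $H$ must pass through the glued vertex or edge $A$, the shortest-path distance on $G$ decomposes exactly as $d_{V'\cup_A W}$. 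One should check this carefully — it is where the hypothesis that the attaching set is a single vertex or single edge is used — but it is essentially the statement that $A$ separates $G'$ from $H$ in $G$.

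Next I would apply the appropriate gluing result to conclude $\vr{V}{r} \simeq \vr{V'}{r} \cup_{\vr{A}{r}} \vr{W}{r}$. When $A$ is a single vertex, this is immediate from Proposition~\ref{prop:rips_wedge} (metric wedge sum), giving $\vr{V}{r} \simeq \vr{V'}{r} \vee \vr{W}{r}$. When $A$ is a single edge $\{u,u'\}$ of length one, I would invoke Theorem~\ref{thm:graph-gluing} with $G_A$ the edge itself, so $\alpha = 1$; the hypothesis $\alpha < \ell/3$ requires the shortest cycle through $G_A$ in either side to have length greater than $3$, which holds for $r \geq 1$ in the relevant discrete setting provided the attached cycles have length at least $4$ — here one must be a little careful about short cycles ($3$-cycles and $4$-cycles), and I would handle these either by a direct check or by noting that $\vr{V(C_3)}{r}$ and the $r=1$ behavior are degenerate and can be absorbed. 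Alternatively, and more robustly, I would bypass $\alpha < \ell/3$ by verifying the hypothesis of Theorem~\ref{thm:general-gluing} directly: for $\emptyset \neq S_{V'} \subseteq V' \setminus A$ and $\emptyset \neq S_W \subseteq W \setminus A$ with $\diam(S_{V'} \cup S_W) \leq r$, one checks that since all cross-distances route through $\{u,u'\}$, there is a unique maximal nonempty subset of $A$ completing $S_{V'}\cup S_W$ to a simplex — and in fact since $\diam(A) = 1 \leq r$, at least one of $u, u'$ is a common witness, so the maximal set is nonempty. In either edge-attachment case, since $\vr{A}{r}$ is a point (or an edge, hence contractible) for $r \geq 1$, the gluing collapses to a wedge: $\vr{V}{r} \simeq \vr{V'}{r} \vee \vr{W}{r}$.

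Finally I would dispose of the two types of attached pieces. If $H$ is dismantlable, then by the discussion preceding the proposition (folds / LC reductions), $\vr{W}{r}$ is contractible for all $r \geq 1$, so $\vr{V'}{r} \vee \vr{W}{r} \simeq \vr{V'}{r}$ and the wedge-of-cycles decomposition is unchanged — consistent with operation (i) contributing no factor. If $H$ is a $k_i$-cycle, then $\vr{W}{r} = \vr{V(C_{k_i})}{r}$ contributes exactly one new wedge summand. Combining with the inductive hypothesis $\vr{V'}{r} \simeq \bigvee \vr{V(C_{k_j})}{r}$ over the previously-attached cycles, and using associativity of the wedge sum, gives $\vr{V}{r} \simeq \bigvee_{i=1}^n \vr{V(C_{k_i})}{r}$, completing the induction. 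The main obstacle I anticipate is the verification that the metric gluing hypothesis genuinely holds — i.e.\ that $A$ separates the pieces metrically — together with the bookkeeping for small cycles where $\alpha < \ell/3$ may fail; using Theorem~\ref{thm:general-gluing} directly rather than Theorem~\ref{thm:graph-gluing} is the cleanest way around the latter.
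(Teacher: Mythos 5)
Your proposal is correct and follows the same route as the paper's (very terse) proof: induct on the attaching operations, realize each attachment as a metric gluing along a vertex or a unit edge, apply the wedge-sum/gluing results of Section~\ref{sec:homotopy}, and use that dismantlable pieces contribute contractible (hence invisible) wedge summands while $k$-cycles contribute $\vr{V(C_k)}{r}$. The one substantive difference is that you flag, and correctly repair, an issue the paper's proof passes over in silence: when a $3$-cycle is attached along an edge (or when the gluing edge already lies on a triangle of the partially built graph), the hypothesis $\alpha<\ell/3$ of Theorem~\ref{thm:graph-gluing} fails with $\alpha=1$ and $\ell=3$, so the cited theorem does not literally apply. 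Your fallback — verifying the hypothesis of Theorem~\ref{thm:general-gluing} directly for a unit-length gluing edge $A=\{u,u'\}$ — does work, though the claim that one of $u,u'$ is a common witness deserves one more line than you give it: for $y\in S_W\setminus A$ one has $d(a,y)\ge 1$ for $a\in A$, so $d(p,y)\le r$ forces $\min\{d(p,u),d(p,u')\}\le r-1$, and since $|d(p,u)-d(p,u')|\le 1$ both distances are at most $r$; hence \emph{both} endpoints witness $S_{V'}\cup S_W$, the valid subsets of $A$ are closed under union because $\diam(A)=1\le r$, and $\sigma=A$ is the unique maximal set. This makes your argument strictly more complete than the published one for the small-cycle cases, at no real extra cost.
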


\begin{proof}
It suffices to show that an operation of type (i) does not change the homotopy type of the Vietoris--Rips complex of the vertex set, and that an operation of type (ii) has the effect up to homotopy of taking a wedge sum with $\vr{V(C_k)}{r}$.
The former follows from applying Theorem~\ref{thm:graph-gluing}, as the Vietoris--Rips complex of the vertex set of a dismantlable graph is contractible for all $r\ge 1$, and the latter also follows from Theorem~\ref{thm:graph-gluing}.
\end{proof}

The iterative procedure outlined in Proposition~\ref{prop:iter_gluing_vertex} can be used to obtain some recognizable families of graphs.
Examples include trees and wedge sums of cycles (Figure~\ref{fig:types}(a)).
More complicated are \emph{polygon trees}~\cite{DongKohTeo2005} in which cycles are iteratively attached along a single edge.
Graph (b) in Figure~\ref{fig:types} is an example that is built by using both (i) and (ii). 
Note that we can characterize graphs beyond what is stated in Proposition \ref{prop:iter_gluing_vertex}. 
Graph (c) in Figure~\ref{fig:types} can be characterized by iteratively gluing, for instance, the top two 7-cycle graphs along the common path of length 1, and then gluing the bottom 7-cycle graph along the path of length 2 (instead of length $1$ as required in Proposition \ref{prop:iter_gluing_vertex}).
Indeed, recall Theorem~\ref{thm:graph-gluing} can characterize graphs as long as all non-endpoint vertices have degree 2 with respect to one side of the gluing path, and that the gluing path is of length less than $\frac{\ell}{3}$.

A similar procedure is possible for metric graphs, except that we must replace arbitrary dismantlable graphs with the specific case of trees.\footnote{The case of $C_3$, a cyclic graph with three unit-length edges, is instructive.
Since $C_3$ is dismantlable we have that $\vr{V(C_3)}{r}$ is contractible for any $r\ge 1$.
But since the metric graph $C_3$ is isometric to a circle of circumference 3, it follows from~\cite{AdamaszekAdams2017} that $\vr{C_3}{r}$ is not contractible for $0<r<\frac{3}{2}$.} Note that any tree, $T$, is obtained by starting with a single vertex and iteratively taking the wedge sum with a single edge.
This implies that $\vr{T}{r}$ is contractible; thus, the persistent homology filtration of any tree is trivial, a result that is also established in~\cite{chan2013topology,OudotSolomon2017}.

\begin{proposition}\label{prop:iter_gluing_metric}
Let $G$ be a metric graph, with each edge of length one, that can be obtained from a vertex by iteratively attaching (i) an edge along a vertex or (ii) a $k$-cycle graph along a vertex or a single edge.
Then we have $\vr{G}{r} \simeq \bigvee_{i=1}^n \vr{C_{k_i}}{r}$ for $r\ge 1$, where $n$ is the number of times operation (ii) is performed, $k_i$ are the corresponding cycle lengths, and $C_{k_i}$ is a loop of length $k_i$.
\end{proposition}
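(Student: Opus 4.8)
The plan is to imitate the proof of Proposition~\ref{prop:iter_gluing_vertex}, replacing the appeal to ``dismantlable graphs give contractible Vietoris--Rips complexes'' with the simpler observation that a single edge, as a metric graph, has contractible Vietoris--Rips complex at every scale. First I would set up the induction on the number of attaching operations used to build $G$. The base case is a single vertex, for which $\vr{G}{r}$ is a point and the empty wedge sum is also a point.

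For the inductive step, suppose $G = G' \cup_A H$ where $G'$ is obtained by $k-1$ operations and the last operation attaches $H$ (either an edge or a $k_n$-cycle $C_{k_n}$) along a common subgraph $A$, which is either a single vertex or a single edge of length one. I would verify that the hypotheses of Theorem~\ref{thm:graph-gluing} are met: the gluing subgraph $G_A = A$ has length $\alpha \in \{0,1\}$; every non-endpoint vertex of $G_A$ has degree $2$ restricted to $G_X$ (vacuously, since $G_A$ has no non-endpoint vertices when $\alpha \le 1$); and we need $\alpha < \ell/3$ where $\ell$ is the minimum over the shortest cycles through $G_A$ in the two pieces. Since all edges have length one and any cycle in either piece has at least $3$ edges, $\ell \ge 3 > 3\alpha$ whenever $\alpha \le 1$ --- though I should be slightly careful that when $\alpha = 1$ we need $\ell > 3$, i.e. no $3$-cycle passing through the glued edge; I would either assume $k_i \ge 4$ in operation (ii), or handle $3$-cycles directly as the footnote before the proposition already flags that $C_3$ behaves like a metric circle of circumference $3$. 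Granting this, Theorem~\ref{thm:graph-gluing} gives $\vr{G}{r} \simeq \vr{G'}{r} \cup_{\vr{A}{r}} \vr{H}{r}$.

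Next I would dispatch the two cases for $H$ and $A$. When $H$ is an edge: a metric edge (a closed interval) has $\vr{H}{r}$ contractible for all $r$ since it is a convex subset of $\mathbb{R}$ (Vietoris--Rips complexes of such are contractible --- e.g., via the fact that the complex is a cone, or just that an interval is dismantlable/CAT(0)), so by the ``in particular'' clause of Theorem~\ref{thm:graph-gluing} (noting $\vr{A}{r}$ is also contractible, $A$ being a point or an edge) we get $\vr{G}{r} \simeq \vr{G'}{r} \vee \vr{H}{r} \simeq \vr{G'}{r}$, leaving the wedge sum unchanged. When $H = C_{k_n}$ and $A$ is a vertex or an edge of length one: since $\vr{A}{r}$ is contractible for $r \ge 1$, Theorem~\ref{thm:graph-gluing} gives $\vr{G}{r} \simeq \vr{G'}{r} \vee \vr{C_{k_n}}{r}$. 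Combining with the inductive hypothesis $\vr{G'}{r} \simeq \bigvee_{i=1}^{n-1}\vr{C_{k_i}}{r}$ yields the claimed $\vr{G}{r} \simeq \bigvee_{i=1}^{n}\vr{C_{k_i}}{r}$ for $r \ge 1$.

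The main obstacle I anticipate is the bookkeeping around the degree hypothesis and the cycle-length hypothesis of Theorem~\ref{thm:graph-gluing} in the case $\alpha = 1$: one must be sure that no short cycle of length $\le 3$ passes through the glued edge in either piece, which is why trees (rather than general dismantlable graphs) and $k$-cycles with $k$ large enough are the natural building blocks here --- the metric-graph $C_3$ genuinely fails to be contractible at small scales. A secondary, more routine point is justifying that $\vr{I}{r}$ is contractible for a metric interval $I$; I would cite the convexity/CAT(0) argument or note that an interval deformation retracts appropriately, rather than reproving it. Once those two points are pinned down, the induction runs exactly parallel to Proposition~\ref{prop:iter_gluing_vertex}.
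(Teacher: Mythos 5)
Your argument is the paper's own: the proof given there simply says the result is ``analogous to Proposition~\ref{prop:iter_gluing_vertex},'' i.e., induct on the operations and apply Theorem~\ref{thm:graph-gluing} at each step, using that $\vr{A}{r}$ and the Vietoris--Rips complex of an attached edge are contractible for $r\ge 1$, exactly as you do. The one caveat you flag is genuine and is not addressed by the paper either: attaching a $3$-cycle along a single edge gives $\alpha=1$ and $\ell=3$, so the hypothesis $\alpha<\ell/3$ of Theorem~\ref{thm:graph-gluing} fails and that step of the induction is not justified as stated; your suggested fix (require $k_i\ge 4$ for edge-gluings, or only glue $3$-cycles at vertices) is the right repair.
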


\begin{proof}
The proof is analogous to that of Proposition~\ref{prop:iter_gluing_vertex}.
\end{proof}

The edge lengths can be generalized to allow any real-valued length, as long as the conditions of Theorem~\ref{thm:graph-gluing} are satisfied.
The iterative procedures in Propositions~\ref{prop:iter_gluing_vertex}--\ref{prop:iter_gluing_metric} produce families of metric graphs which can be characterized, but there are other ways to build up graphs in an admissible manner.
For example, instead of requiring that we glue along a single edge we may allow gluing along longer paths that meet the length criteria of Theorem~\ref{thm:graph-gluing}.
This more general definition allows for gluing along subpaths of previous gluing paths.
When using this procedure, we caution the reader that the order in which one glues graphs together matters.
In particular, one must glue first along the longest path before gluing along any shorter paths contained within.
A simple example is shown in Figure~\ref{fig:caution_gluing}.
Notice that we cannot characterize the vertex set of this graph iteratively using our results if we first glue $C_9$ to $C_3$ along a single edge.
Doing so would require that $C_{10}$ be glued along the path of length 3 next.
However, since $C_3$ is included, this path is no longer an admissible gluing path since $\alpha \nless \frac{\ell}{3}$.
This observation that gluing order matters does not come into play in Propositions~\ref{prop:iter_gluing_vertex}--\ref{prop:iter_gluing_metric} because gluing was restricted along a single edge.

\begin{figure}[!ht]
\begin{center}
\includegraphics[width=0.3\textwidth]{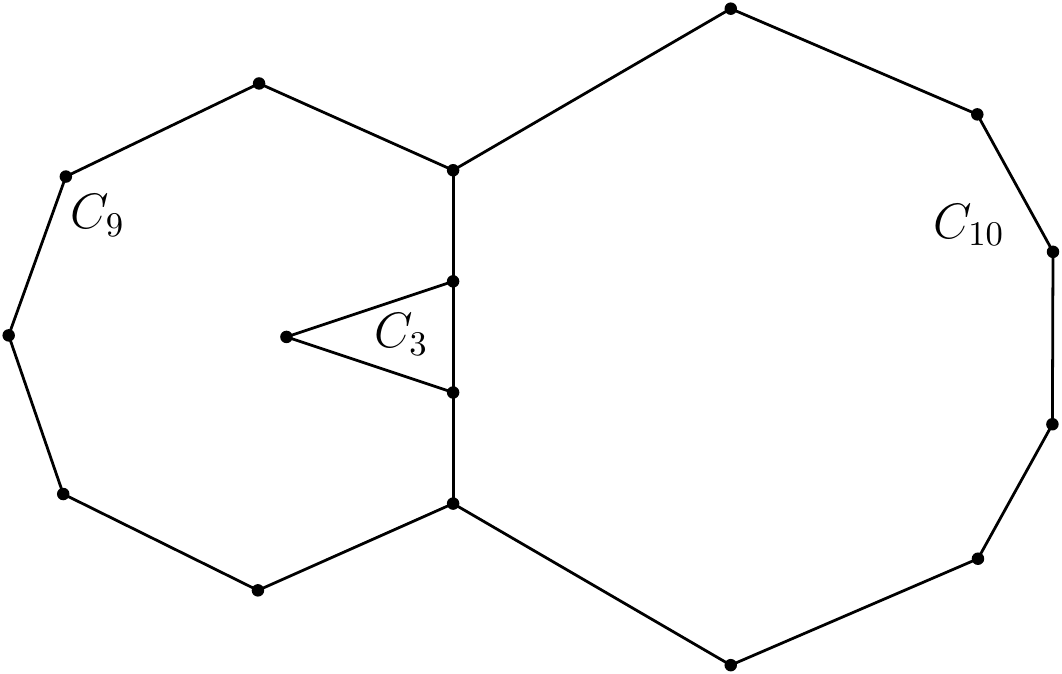}
\qquad
\includegraphics[width=0.3\textwidth]{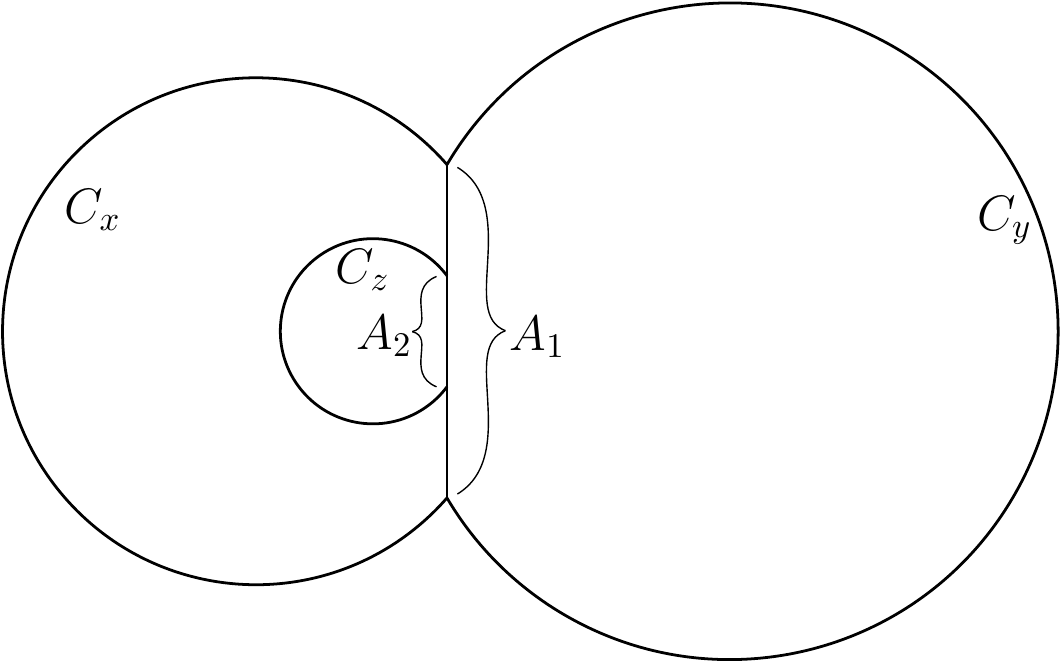}
\caption{A finite vertex subset (left) and a metric graph (right) built by first gluing $C_x$ to $C_y$ along $A_1$, and then gluing $C_z$ to the result along $A_2$.}\label{fig:caution_gluing}
\end{center}
\end{figure}

In future work, we hope to extend the results in this paper to gluing metric graphs along admissible isometric trees (a generalization of isometric simple paths).
The final graph (d) in Figure~\ref{fig:types}, the cube graph, is an example of a case for which Theorem~\ref{thm:graph-gluing} is not applicable.
We cannot compute the homology of the vertex set of the cube as the direct sum of the homology groups of smaller component pieces.
Indeed, if $V$ is the vertex set of the cube with each edge of length one, then $\dim(H_3(\vr{V}{2}))=1$ since $\vr{V}{2}$ is homotopy equivalent to the 3-sphere.
However, this graph is the union of five cycles of length four, and the Vietoris--Rips complex of the vertex set of a cycle of length four never has any 3-dimensional homology.

%------------------------------------------------------------------------------------------------

\section{Gluings with the supremum metric}\label{sec:sup}

Like the cube example in Section~\ref{sec:types}, we now describe other graphs that the techniques presented so far in this paper do not apply to.
Another such example is a ``circular ladder", i.e., the union of two circles with a finite number of edges or ``rungs" connecting corresponding angles on the two circles; see Figure~\ref{fig:ladders}(right).
Forming this ladder by gluing together smaller circles would require, in the final stages, gluing along paths that are too long for the results in our paper to apply.
In this section, we present a different set of techniques, based on Quillen's Fiber Theorem~A~\cite{barmak2011quillen,Quillen1973}, which one can use to describe the Vietoris--Rips complexes of a circular ladder (for example), \emph{albeit equipped with a distance that is not the metric graph distance}.

Specifically, in this section we consider the $L^\infty$ or \emph{supremum} metric on the product of two metric spaces.

\begin{definition}\label{def:sup-metric}
Given two metric spaces $(X,d_X)$ and $(Y,d_Y)$, the $L^\infty$ or \emph{supremum} metric on $X\times Y$ is defined by
\[ d((x,y),(x',y'))=\max\{d_X(x,x'),d_Y(y,y')\}.\]
\end{definition}

The main result in this section is Theorem~\ref{thm:sup-metric}, which establishes a relation between the Vietoris--Rips complexes of a space and a special type of gluing or product under this supremum metric. 

\begin{theorem}\label{thm:sup-metric}
Let $X$ and $Y$ be metric spaces, with subsets $X_0\subseteq X$ and $Y_0\subseteq Y$ such that
$Y_0\neq\emptyset$.
Furthermore, assume that $\vr{Y_0}{r}\simeq *$ and $\vr{Y}{r}\simeq *$.
Then $\vr{X\times Y_0 \cup X_0\times  Y}{r}\simeq\vr{X}{r}$, where $X\times Y_0 \cup X_0\times Y$ is equipped with the $L^\infty$ metric (as a subset of $X\times Y$).
\end{theorem}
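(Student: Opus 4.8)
The plan is to exhibit a simplicial map from $\vr{X\times Y_0\cup X_0\times Y}{r}$ onto $\vr{X}{r}$ via the projection $p\colon X\times Y\to X$, and to show $p$ is a homotopy equivalence by applying Quillen's Fiber Theorem A to the barycentric subdivisions. First I would observe that $p$ restricts to a simplicial map on our complexes: if $\sigma=\{(x_1,y_1),\dots,(x_k,y_k)\}$ has diameter $\le r$ in the supremum metric, then $\diam_X\{x_1,\dots,x_k\}\le r$, so $p(\sigma)\in\vr{X}{r}$; this uses $Y_0\neq\emptyset$ only to guarantee the domain is nonempty and that $p$ is surjective (given $x\in X$, pick any $y_0\in Y_0$, and $(x,y_0)\in X\times Y_0$). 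Since $p$ is simplicial it induces an order-preserving map on face posets, and Quillen's Theorem A says $p$ is a homotopy equivalence provided that for every simplex $\tau\in\vr{X}{r}$, the subcomplex $p^{-1}(\tau\!\downarrow)$ — i.e.\ the simplicial complex spanned by all vertices $(x,y)$ with $x\in\tau$ that can be added to faces mapping into $\tau$ — is contractible, or more precisely the fiber poset over each $\tau$ (the poset of simplices of the source mapping into $\operatorname{star}$-type data) is contractible.

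The key computation is therefore identifying the fiber. Fix a simplex $\tau=\{x_1,\dots,x_k\}\in\vr{X}{r}$. The relevant fiber is the full subcomplex of $\vr{X\times Y_0\cup X_0\times Y}{r}$ on the vertex set $p^{-1}(\tau)\cap(X\times Y_0\cup X_0\times Y)$ — but it is cleaner to work with the over-poset. I would show that, over a fixed $\tau$, the portion of the complex living above it deformation retracts onto $\tau\times Y_0$'s Vietoris--Rips complex, which is $\vr{Y_0}{r}\simeq *$ when $\tau$ is a single vertex; for larger $\tau$ one needs the fiber over $\tau$ in the poset sense. Concretely: a vertex over $\tau$ is either of the form $(x_i,y)$ with $y\in Y$ and $x_i\in X_0$, or $(x_i,y)$ with $y\in Y_0$ and $x_i\in X$; the diameter constraint forces all the $Y$-coordinates appearing to have pairwise distance $\le r$, i.e.\ to span a simplex of $\vr{Y}{r}$. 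So the fiber complex over $\tau$ maps (via the $Y$-projection) into $\vr{Y}{r}$, and I would argue this map is itself a homotopy equivalence (again by a fiber argument, or by noting the fiber complex is a join-like/iterated-cone construction over $\vr{Y}{r}$), giving contractibility of each fiber from the hypothesis $\vr{Y}{r}\simeq *$ together with $\vr{Y_0}{r}\simeq *$ to handle the boundary case where some $x_i\notin X_0$ and the corresponding coordinates are forced into $Y_0$.

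The main obstacle will be handling the mixed nature of the vertex set $X\times Y_0\cup X_0\times Y$ in the fiber: when $\tau$ contains some vertices $x_i\in X_0$ and some $x_j\in X\setminus X_0$, the fiber over $\tau$ consists of tuples whose $j$-th coordinate is constrained to $Y_0$ while the $i$-th may roam in $Y$, all subject to the joint diameter $\le r$ condition in $Y$. I would prove contractibility of this fiber by a direct collapse: pick a base point realizing the contraction of $\vr{Y}{r}$ (using Lemma~\ref{lem:compactness-Quillen} / Corollary~\ref{cor:compactness} to reduce to the finite case first), and use it to cone off the fiber, checking that the added cone vertex can always be taken in $Y_0$ so it is compatible with the $x_i\in X\setminus X_0$ coordinates — this is where $\vr{Y_0}{r}\simeq *$, rather than merely $Y_0\neq\emptyset$, is essential. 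Having established each fiber is contractible, Quillen's Theorem~A gives $\vr{X\times Y_0\cup X_0\times Y}{r}\simeq\vr{X}{r}$, and the reduction from arbitrary to finite $X,Y$ is handled exactly as in the proofs of Theorems~\ref{thm:general-gluing} and~\ref{thm:general-gluing-edited} by invoking Corollary~\ref{cor:compactness} (or Lemma~\ref{lem:compactness-Quillen} in full generality, as the paper foreshadows).
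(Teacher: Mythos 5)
Your overall strategy is the paper's: project $(x,y)\mapsto x$, invoke Quillen's Fiber Theorem~A (in Barmak's simplicial form) to reduce to contractibility of the preimage $\phi^{-1}(\sigma)$ of each closed simplex $\sigma\in\vr{X}{r}$, and pass from finite to arbitrary $X,Y$ via Lemma~\ref{lem:compactness-Quillen}. The one place you diverge is the fiber computation, and there you offer two tactics of unequal merit. Your first tactic --- apply Quillen's Theorem~A a second time to the $Y$-projection of the fiber --- does work and is a clean alternative to what the paper does: for $\sigma\cap X_0\neq\emptyset$ the preimage in $\phi^{-1}(\sigma)$ of a closed simplex $\rho\in\vr{Y}{r}$ is the full simplex on $\sigma\times(\rho\cap Y_0)\cup(\sigma\cap X_0)\times\rho$, which is nonempty and hence contractible, so $\phi^{-1}(\sigma)\simeq\vr{Y}{r}\simeq *$; for $\sigma\cap X_0=\emptyset$ the same argument against $\vr{Y_0}{r}$ gives $\phi^{-1}(\sigma)\simeq\vr{Y_0}{r}\simeq *$. (The paper instead covers $\phi^{-1}(\sigma)$ by its maximal simplices $\kappa_\rho$ and identifies its nerve with the nerve of $\vr{Y}{r}$; both routes are legitimate.) Your second tactic, however, is a non-starter: you cannot ``pick a base point realizing the contraction of $\vr{Y}{r}$ and use it to cone off the fiber,'' because contractibility of $\vr{Y}{r}$ does not furnish a vertex within distance $r$ of every point of $Y$ appearing in the fiber, let alone one lying in $Y_0$; a contractible Vietoris--Rips complex need not be a cone. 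Stick with the double-Quillen (or nerve) argument and drop the coning. Two smaller corrections: the dichotomy governing which hypothesis is used is whether $\sigma\cap X_0$ is empty (fiber $\simeq\vr{Y_0}{r}$) or not (fiber $\simeq\vr{Y}{r}$), not whether ``some $x_i\notin X_0$''; and for the infinite case Corollary~\ref{cor:compactness} does not apply, since $\phi$ is a projection rather than a subcomplex inclusion on a common vertex set --- you need Lemma~\ref{lem:compactness-Quillen} in full generality together with an explicit choice of the finite set $V_1$ (the paper takes products of the projections of $V_0$ with $W_0$ adjoined, augmented by a point of $Y_0$ when necessary), a step your sketch elides.
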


\begin{proof}
We first prove the case where $X$ and $Y$ are finite.
Consider the map $\phi \colon L \to K$, where  $L=\vr{X\times Y_0 \cup X_0\times Y}{r}$, where $K=\vr{X}{r}$, and where $\phi$ is defined as the simplicial map sending a vertex $(x,y)\in X\times Y_0 \cup X_0\times Y\subseteq X\times Y$ to the vertex $x\in X$.
By~\cite[Theorem~4.2\footnote{We have switched the roles of $L$ and $K$ for notational convenience.}]{barmak2011quillen} (Quillen's Fiber Theorem~A), in order to show that $\phi$ is a homotopy equivalence, it suffices to show that for any closed simplex $\sigma\in K$, we have that its preimage $\phi^{-1}(\sigma)$ is contractible.
Note that $\phi^{-1}(\sigma)$ is the simplicial complex
\[\phi^{-1}(\sigma)=\{\tau\subseteq \sigma\times Y_0 \cup (\sigma\cap X_0)\times Y~|~\diam(\pi_Y(\tau))\le r\},\]
where $\pi_Y$ is the projection onto $Y$, and where the diameter is taken in $Y$.

We consider two cases.
If $\sigma\cap X_0=\emptyset$, then the definition of $\phi^{-1}(\sigma)$ simplifies down to the simplicial complex $\phi^{-1}(\sigma)=\sigma\times\vr{Y_0}{r}$, which is contractible as desired.
\newline\indent
For the second case, suppose $\sigma\cap X_0\neq\emptyset$.
The maximal simplices of $\phi^{-1}(\sigma)$ are of the form \[\kappa_\rho:=\sigma\times (\rho\cap Y_0)\cup(\sigma \cap X_0)\times\rho,\] where $\rho$ varies over the maximal simplices of $\vr{Y}{r}$.
For $Q$ is a simplicial complex, let $\nn(Q)$ denote the nerve of the cover of $Q$ by its maximal simplices.
Since this is a good cover, we have $\nn(Q)\simeq Q$ by the nerve lemma~\cite[Remark~15.22]{Kozlov2008}.
In particular, we have $\nn(\vr{Y}{r})\simeq \vr{Y}{r}$ and $\nn(\phi^{-1}(\sigma))\simeq \phi^{-1}(\sigma).$
It then suffices to show that maximal simplices $\rho,\rho'$ of $\vr{Y}{r}$ intersect if and only if the corresponding maximal simplices $\kappa_\rho,\kappa_{\rho'}$ of $\phi^{-1}(\sigma)$ intersect.
Indeed, that would give a simplicial isomorphism $\nn(\vr{Y}{r})\cong \nn(\phi^{-1}(\sigma))$, in which case we would have 
\[ \phi^{-1}(\sigma)\simeq \nn(\phi^{-1}(\sigma))\cong \nn(\vr{Y}{r})\simeq \vr{Y}{r}\simeq*,\]
as desired.
If $\kappa_\rho$ and $\kappa_{\rho'}$ intersect, then they do so at a point in both $\sigma\times\rho$ and $\sigma\times\rho'$, which means that $\rho$ and $\rho'$ intersect.
For the reverse direction, suppose $\rho$ and $\rho'$ intersect.
Since $\sigma\cap X_0\neq\emptyset$, then clearly $\kappa_\rho$ and $\kappa_{\rho'}$ also intersect, as needed.
Hence we have shown that $\phi^{-1}(\sigma)$ is contractible, and so~\cite[Theorem~4.2]{barmak2011quillen} implies that $\phi\colon L\to K$ is a homotopy equivalence.

We now use the finite case to prove the arbitrary case (where $X$ and $Y$ are possibly infinite).
Our proof relies on Lemma~\ref{lem:compactness-Quillen}.
Consider $\phi\colon L\to K$ defined as above, where again $L=\vr{X\times Y_0 \cup X_0\times Y}{r}$ and $K=\vr{X}{r}$.

In order to apply Lemma~\ref{lem:compactness-Quillen}, we have to show that for every finite $V_0\subseteq X\times Y_0 \cup X_0\times Y$ and $W_0\subseteq X$, there exists a finite subset $V_1$ with $V_0\subseteq V_1\subseteq X\times Y_0 \cup X_0\times Y$ and $W_0\subseteq \phi(V_1)\subseteq X$ such that the induced map $L[V_1]\to K[\phi(V_1)]$, i.e.\ $\vr{V_1}{r}\to\vr{\phi(V_1)}{r}$, is a homotopy equivalence.
Let $V_0^X$ and $V_0^Y$ be the images of the projections of $V_0$ onto $X$ and onto $Y$, respectively.
If $V_0^Y\cap Y_0=\emptyset$ then, for the remainder of this proof add to $V_0^Y$ a single arbitrary point of $Y_0$, enforcing $V_0^Y\cap Y_0\neq\emptyset$.
We let
\[V_1=\left((V_0^X\cup W_0)\times (V_0^Y\cap Y_0)\right) \cup \left(\left((V_0^X\cup W_0)\cap X_0\right) \times V_0^Y\right).\]
To see that $V_0\subseteq V_1$, note that any point $(x,y)\in V_0$ has either $x\in X_0$ or $y\in Y_0$ (or both).
Note that $V_1$ is finite, and that $\phi(V_1)=V_0^X\cup W_0$ contains $W_0$ since $V_0^Y\cap Y_0\neq\emptyset$.
We then apply Theorem~\ref{thm:sup-metric} in the finite case 
to get that the induced map $\vr{V_1}{r}\to\vr{\phi(V_1)}{r}$ is a homotopy equivalence, as required.
Hence, Lemma~\ref{lem:compactness-Quillen} implies that $\phi$ is a homotopy equivalence.
\end{proof}

\begin{remark}
We remark that Theorem~\ref{thm:sup-metric} is not a consequence of~\cite[Proposition~10.2]{AdamaszekAdams2017}, which considers only the Vietoris--Rips complexes of $X\times Y$, but not the Vietoris--Rips complexes of any subsets thereof.
\end{remark}

We give an example that relies on Theorem~\ref{thm:sup-metric}.

\begin{example}\label{ex:circ-ladder1}
Let $Y=[0,\ell]$ be an interval, and let $\emptyset\neq Y_0\subseteq Y$ consist of $m$ points $0\le y_1<y_2<\ldots< y_m\le \ell$.
Let $\kappa_Y=\max_{i\in\{1,\ldots,m-1\}}(y_{i+1}-y_i)$.
Then for $X$ an arbitrary metric space and $X_0$ an arbitrary subset thereof, and for $r\ge \kappa_Y$ (which implies that $\vr{Y_0}{r}\simeq *$), Theorem~\ref{thm:sup-metric} implies that $\vr{X\times Y_0 \cup X_0\times Y}{r}\simeq\vr{X}{r}$, where $X\times Y_0 \cup X_0\times Y$ is equipped with the $L^\infty$ metric.
For example, by taking $X$ to be a circle, by taking $X_0$ to be an arbitrary subset of ``rung locations", and by taking $Y_0=\{0=y_1, y_2, y_3, y_4=\ell\}\subseteq Y$, then $X\times Y_0 \cup X_0\times Y$ is the circular ladder on the left in Figure~\ref{fig:ladders}.
So the Vietoris--Rips complex of this circular ladder is homotopy equivalent to the Vietoris--Rips complex of the circle $X$ (see~\cite{AdamaszekAdams2017}) for $r\ge \kappa_Y$.
We can increase the ``width" of this circular ladder, and have the same result apply, simply by increasing the number of points in $Y_0$.
\end{example}

\begin{figure}[!h]
\begin{center}
\includegraphics[width=0.7\linewidth]{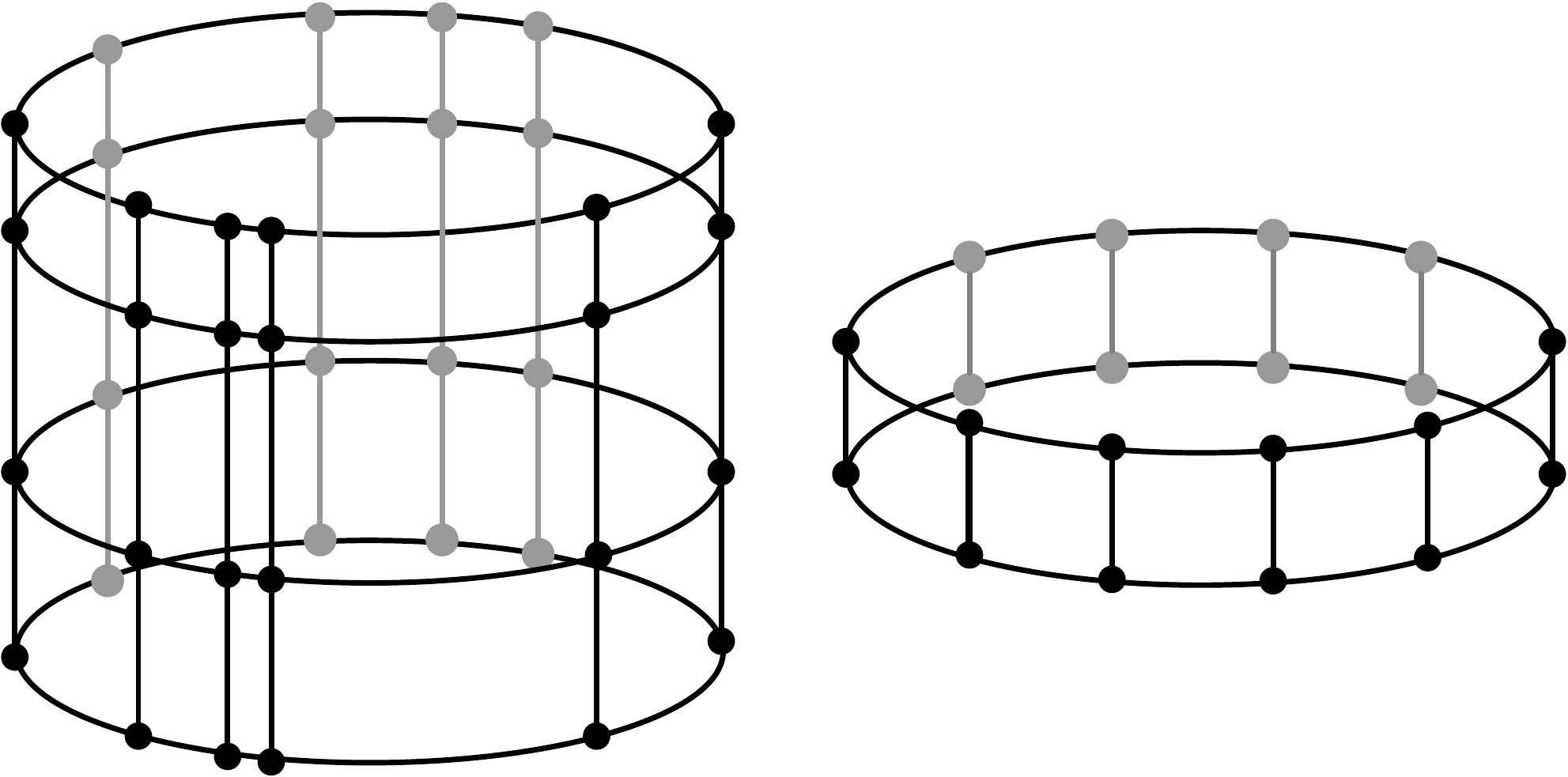}
\caption{Circular ladders with unequally (left) and equally (right) spaced rings, as in Examples~\ref{ex:circ-ladder1} and~\ref{ex:circ-ladder2}, respectively.}
\label{fig:ladders}
\end{center}
\end{figure}

\begin{question}
What are the homotopy types of the Vietoris--Rips complexes of a circular ladder, when the circular ladder is instead equipped with the metric graph distance?
\end{question}

When the circular ladder is of ``width one" and as symmetric as possible, the below example shows that our Theorem~\ref{thm:graph-gluing} can help give a partial answer to the above question.

\begin{example}\label{ex:circ-ladder2}
Let $Y=[0,1]$ be the unit interval, and let $Y_0=\{0,1\}\subseteq Y$ be the two endpoints thereof.
Let $X=S^1$ be a circle of integral circumference $n\ge 3$.
Let $X_0\subseteq X$ consist of $n$ evenly spaced points along the circle, i.e., we will have $n$ ``rungs" in our circular ladder.
See Figure~\ref{fig:ladders} on the right.
We equip the circular ladder $X\times Y_0 \cup X_0\times Y$ with the metric graph distance.

For $r < \frac{n}{3}$, we can describe the homotopy type of $\vr{X\times Y_0 \cup X_0\times Y}{r}$ as follows.
Let $Z=\mathbb{R}\times Y_0 \cup \Z\times Y$ be the ``infinite" ladder.
Let $G$ be the group of integers under addition.
Let $G$ act on $Z$ by having integer 1 send $(x,y)$ to $(x+n,y)$, and hence, integer $j$ sends $(x,y)$ to $(x+jn,y)$.
It turns out that $G$ also acts on $\vr{Z}{r}$ in a similar way.
Note that the quotient space $Z/G$ under the group action is the circular ladder, namely $Z/G=X\times Y_0 \cup X_0\times Y$.
For $r < \frac{n}{3}$, it follows from~\cite{AdamsHeimPeterson} 
that $\vr{Z/G}{r}$, the Vietoris--Rips complex of the circular ladder, and $\vr{Z}{r}/G$, the quotient  of the Vietoris--Rips complex of the infinite ladder under the action of $G$, are isomorphic as simplicial complexes.
Note that the infinite ladder $Z$ is formed by gluing together coutably many circles of circumference 4 along sufficiently short gluing paths.
By~\cite{AdamaszekAdams2017}, the Vietoris--Rips complex of a circle of circumference 4 is homotopy equivalent to the $(2k+1)$-dimensional sphere for $\frac{4k}{2k+1}<r<\frac{4(k+1)}{2k+3}$ with $k$ an integer.
Since $Z$ is formed by gluing these circles together along sufficiently short gluing paths, by Theorem~\ref{thm:graph-gluing} we know that for complex $\vr{Z}{r}\simeq \bigvee^\infty S^{2k+1}$ is homotopy equivalent to a countably infinite wedge sum of $(2k+1)$-dimensional spheres for $\frac{4k}{2k+1}<r<\frac{4(k+1)}{2k+3}$.
Hence when $r < \frac{n}{3}$, our understanding of $G$'s action on $\vr{Z}{r}$ implies that if furthermore $\frac{4k}{2k+1}<r<\frac{4(k+1)}{2k+3}$, then   $\vr{X\times Y_0 \cup X_0\times Y}{r}=\vr{Z/G}{r}\simeq S^1\vee(\bigvee^n S^{2k+1})$ is homotopy equivalent to the wedge sum of a single circle $S^1$ together with the $n$-fold wedge sum of spheres of dimension $2k+1$.
\end{example}

%------------------------------------------------------------------------------------------------

\section{Discussion}
\label{sec:discussion}

We have shown that the wedge sum of Vietoris--Rips complexes is homotopy equivalent to the corresponding complex for the metric wedge sum, and generalized this result in the case of Vietoris--Rips complexes for certain metric space gluings.
Our ultimate goal is to understand to the greatest extent possible the topological structure of large classes of metric graphs via persistent homology.
Building on previous work in~\cite{AdamaszekAdams2017} and~\cite{GasparovicGommelPurvine2018},
the results in this paper constitute another important step toward this goal by providing a characterization of the persistence profiles of metric graphs obtainable via certain types of metric gluing.
Many interesting questions remain for future research.

\para{Gluing beyond a single path.} We are interested in studying metric graphs obtainable via metric gluings other than along single paths, such as gluing along a tree, or along subgraphs with non-trivial topology (e.g., multiple components, or containing a cycle). 
Furthermore, the techniques of our paper do not allow one to analyze self-gluings such as forming an $n$-cycle $C_n$ from a path of length $n$.
Self-gluings also may change the metric structure significantly, and it is likely that new and very different techniques need to be developed to handle self-gluings.

\para{Generative models for metric graphs.}
Our results can be considered as providing a generative model for metric graphs, where we specify a particular metric gluing rule for which we have a clear understanding of its effects on persistent homology.
Expanding the list of metric gluing rules would in turn lead to a larger collection of generative models.

\para{Approximations of persistent homology profiles.}
A particular metric graph that arises from data in practice may not directly correspond to an existing generative model.
However, we may still be able to approximate its persistent homology profile via stability results (e.g.~\cite{ChazalCohen-SteinerGuibas2009,Turner2016}) by demonstrating close proximity between its metric and a known one.

\section*{Acknowledgments} 
We are grateful for the Women in Computational Topology (WinCompTop) workshop for initiating our research collaboration.
In particular, participant travel support was made possible through the grant NSF-DMS-1619908.
This collaborative group was also supported by the American Institute of Mathematics (AIM) Structured Quartet Research Ensembles (SQuaRE) program.
EP was supported by the High Performance Data Analytics (HPDA) program at Pacific Northwest National Laboratory. 
RS is partially supported by the SIMONS Collaboration grant 318086 and NSF grant DMS-1854705. 
BW is partially supported by the NSF grants DBI-1661375 and IIS-1513616. 
YW is partially supported by National Science Foundation (NSF) via grants CCF-1526513, CCF-1618247, CCF-1740761, and DMS-1547357.
LZ is partially supported by the NSF grant CDS\&E-MSS-1854703.

%------------------------------------------------
%\bibliographystyle{abbrv}
%\bibliography{../metric-gluing}

%------------------------------------------------

\appendix

\section{Counterexample to~\cite[Corollary~9]{AdamaszekAdamsGasparovic2018}}\label{appendix:counterexample}

We thank Wojciech Chach\'{o}lski, Alvin Jin, Martina Scolamiero, and Francesca Tombari for the following counterexample to~\cite[Corollary~9]{AdamaszekAdamsGasparovic2018} in the conference version of this paper.
We would also like to mention their forthcoming work on homotopical decompositions of Vietoris--Rips complexes~\cite{Chacholski}.

As shown in Figure \ref{fig:counterexample}, suppose $X$ consists of 4 points in the shape of a quadrilateral of side lengths 0.5, 0.6, 0.5, and 0.6, and with diagonals of length 1.1.
Also, let $Y$ consist of 3 points in the shape of a triangle of side lengths 0.5, 0.5, and 0.6.
Their intersection $A=X\cap Y$ is two points at a distance of 0.6.
For $r=1$ the gluing of the corresponding Vietoris--Rips complexes is homotopy equivalent to a circle since $\vr{X}{r} \simeq S^1$ and $\vr{Y}{r}$ is contractible.
However, the Vietoris--Rips complex of the gluing is in fact contractible (a cone with apex the single point in $Y\setminus A$), and hence not homotopy equivalent to the gluing of the Vietoris--Rips complexes.

\begin{figure}[htb]
\begin{center}
\includegraphics[width=0.5\linewidth]{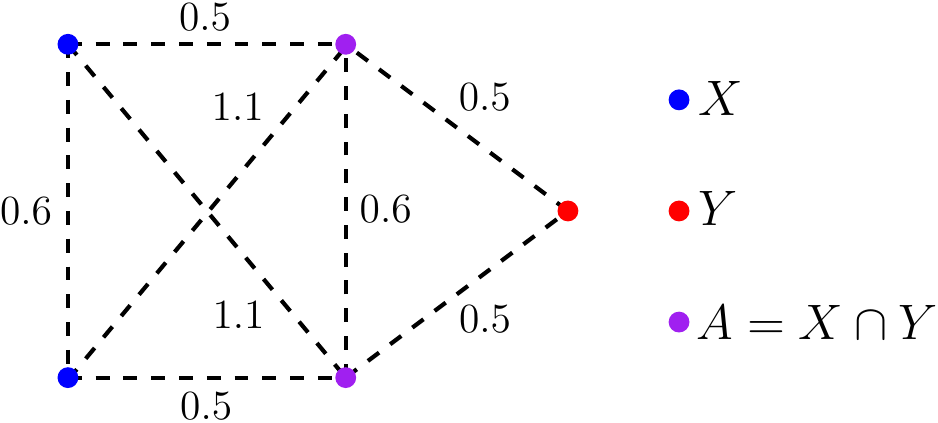}
\caption{A counterexample to~\cite[Corollary~9]{AdamaszekAdamsGasparovic2018}, where metric spaces $X$, $Y$, and $A = X \cap Y$ are denoted in blue, red, and purple, respectively.
Distances between each pair of points are indicated on the dashed lines.}
\label{fig:counterexample}
\end{center}
\end{figure}
%------------------------------------------------

\end{document}